\let\oldbibliography\thebibliography
\renewcommand{\thebibliography}[1]{%
  \oldbibliography{#1}%
  \setlength{\itemsep}{0pt}%
}
\newtheorem{thm}{Theorem}[section]
\newtheorem*{thma}{Theorem A}  
\newtheorem*{thmb}{Theorem B}
\newtheorem{lem}[thm]{Lemma}
\newcounter{counter_conj-lem}
\newtheorem{conj-lem}[thm]{Conjecture-Lemma}
\newtheorem{prop}[thm]{Proposition}
\newcounter{counter_conj-prop}
\newtheorem{conj-prop}[thm]{Conjecture-Proposition}
\newcounter{counter_conj-thm}
\newtheorem{conj-thm}[thm]{Conjecture-Theorem}
\newtheorem{cor}[thm]{Corollary}
\theoremstyle{remark}
\newtheorem{rem}[thm]{Remark}
\newtheorem{ex}[thm]{Example}
\newcommand{\black}{\color{black}}
\DeclareMathOperator{\C}{\mathbb{C}}
\DeclareMathOperator{\Z}{ \mathbb{Z} }
\author{Nguyen-Bac Dang, Thorsten Herrig\footnote{The second author was supported by DFG grant HE 8393/1-1.
\newline
\textit{Keywords: Dynamical degrees, abelian varieties, automorphisms, Salem numbers, (quaternion algebras)}
\newline
\textit{Mathematical Subject Classification (2020): 11G10, 14K05, 37F80, (11R52)}}}
\title{Dynamical degrees of automorphisms on abelian varieties}
\begin{document}

\maketitle

\abstract{For any given Salem number, we construct  an automorphism on a simple abelian variety whose first dynamical degree is the square of the Salem number. Our construction works for both simple abelian varieties with totally indefinite quaternion multiplication and for simple abelian varieties of the second kind.
We then give a complete classification of the dynamical degree sequences for abelian varieties of dimension at most four and obtain an ergodic result for sequences of pullbacks of forms.
 }

\tableofcontents

\section*{Introduction}
\addcontentsline{toc}{section}{Introduction}

Let $X$ be a complex abelian variety of dimension $g$ and $f\colon X \longrightarrow X$ an endomorphism. Such a pair $(X,f)$ can be viewed as a dynamical system on the underlying (compact) complex torus or as an integral point inside the associated endomorphism algebra.
In this paper, we explore the connection between the underlying dynamical system induced by the map $f$ and the algebro-geometric and number-theoretic viewpoint.       
\medskip

Viewed as a dynamical system on the metric space $X$, the topological entropy $h_{\rm top}(f)$ of $f$ is a fundamental invariant that measures quantitatively how the orbits of $f$ "separate". 
By Gromov-Yomdin's theorem \cite{gromov}, this quantity can be expressed as
\begin{equation*}
 h_{\rm top}(f) = \max_{k \leqslant g} \log || f^* \colon{\rm H}^{k,k}(X) \longrightarrow {\rm H}^{k,k}(X) ||,
\end{equation*}
where $\rm H^{k,k}(X)$ denotes the $(k,k)$-Dolbeaut cohomology of $X$.
The spectral radius of $f^* $ on each $\rm H^{k,k}(X)$ provides a more refined dynamical invariant, the $k$-th dynamical degree $\lambda_k(f)$ of $f$.
These quantities measure the volume growth of the preimages by $f$ of complex subvarieties of codimension $k$ and $k \mapsto \lambda_k(f)$ forms a log concave sequence by Khovanskii-Teissier inequalities (see \cite[Example 1.6.4]{lazarsfeld_positivity_1}).
\smallskip

The dynamical degrees are Pisot or Salem numbers when $f$ is a surface automorphism or a birational surface map of positive entropy \cite{diller_favre,blanc_cantat}. 
Conversely, automorphisms with Salem entropies or Salem dynamical degrees were realized in the case of K3 surfaces \cite{McMullen11,McMullen02,oguiso}, Enriques surfaces \cite{MOR18,oguiso_yu_20}, two-dimensional complex tori \cite{Reschke12}, abelian surfaces \cite{Reschke17} (see also \cite{blanc_cantat} for Cremona transformations) and on certain  abelian varieties and Calabi-Yau varieties \cite{oguiso_pisot}. 
In this paper, we adopt the convention in which a Salem polynomial has at least one root of absolute value one (cf. \cite{bdgps}).

We are  interested in the dynamical spectra of a given automorphism $f$ on an abelian variety of arbitrary dimension $g$, namely on the sequences of the form:
\begin{equation*}
\Lambda(f):=(\lambda_1(f), \lambda_2(f), \ldots, \lambda_{g-1}(f)).
\end{equation*}
In this particular situation, the knowledge of the spectra of a given endomorphism allows one to deduce if there are preserved fibrations on threefolds \cite{truong_oguiso_salem}.
 Moreover, the precise eigenvalues of the pullback  action $f^*$ on the cohomology $\rm{H}^{\bullet}(X)$ can be used to detect when invariant subvarieties are translates of subtori \cite{krieger_reschke}. 
This property was proven by Raynaud \cite{raynaud} and is an occurrence of the Manin-Mumford conjecture \cite{pink_roessler}. The dynamical formulation of this conjecture was stated by Ghioca-Tucker-Zhang \cite{ghioca_tucker_zhang} and obtained in various situations \cite{zhang,baker_demarco,favre_dujardin_manin,favre2020arithmetic}.
 
\medskip

Considering the different values of the dynamical degrees and how these numbers are arranged, one can recover various ergodic properties displayed by a given rational map $f$ when it is cohomologically hyperbolic in the sense of Guedj:
\begin{equation*}
\lambda_1(f) < \lambda_2(f) < \ldots < \lambda_k(f) > \lambda_{k+1}(f) > \ldots > \lambda_g(f).
\end{equation*} 
Guedj \cite{guedj_proprietes} proved  that $f$  admits a unique measure of maximal entropy when the topological degree is the largest dynamical degree and the construction of this measure was extended  to more general cohomologically hyperbolic situations \cite{dinh_sibony_green_currents,dethelin_vigny,vigny_maxi}. 
 
 Naturally, one can ask whether one can still deduce some ergodic properties when the cohomological hyperbolicity condition is violated. 
When this happens, the eigenspace ${\rm E}_k$ associated to the spectral radius of the action on ${\rm H}^{k,k}(X)$ has dimension $e_k \geqslant 1$. 
We obtain the following  result.
\newpage

\begin{thma}\label{thm1}
 Let $X$ be an abelian variety of dimension $g\leq 4$ and let $f$ be an automorphism on $X$ of positive entropy.
Then, either $f$ is semi-conjugate to an automorphism on an abelian variety of lower dimension, or the following properties hold:
\begin{enumerate}
\item[(i)] The Haar measure is an ergodic and invariant  measure for $f$.
\item[(ii)] Take an integer $1\leqslant k \leqslant g-1$. For any $(k,k)$-form $\Omega$ (not necessarily closed), the limit in the sense of currents
\begin{equation*}
\lim_{N\rightarrow +\infty} \dfrac{1}{N} \sum_{j\leqslant N}\dfrac{(f^j)^* \Omega}{\lambda_k(f)^j}
\end{equation*} 
exists and belongs to ${\rm E}_k$.
\end{enumerate}
\end{thma}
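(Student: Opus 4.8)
The plan is to reduce both assertions to linear-algebraic statements about the action of $f^*$ on the total cohomology of $X$. Since $X$ is an abelian variety, $\mathrm{H}^\bullet(X,\R)=\bigwedge^\bullet \mathrm{H}^1(X,\R)$ and the action of $f^*$ on $\mathrm{H}^1$ is given by a single integral matrix $F$; the action on $\mathrm{H}^{k,k}(X)$ is then a subrepresentation of $\bigwedge^{2k} F$. The first step is to record the dichotomy in the statement: if some eigenvalue-ratio degeneracy forces $f$ to preserve a proper subtorus (equivalently, $F$ preserves a rational subspace of $\mathrm{H}^1$ on which it still has positive entropy), then $f$ is semi-conjugate to a lower-dimensional automorphism and we are in the excluded case. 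Otherwise, for $g\le 4$, a case analysis on the possible factorizations of the characteristic (Salem) polynomial of $F$ shows that the eigenvalues of $f^*$ on $\mathrm{H}^1$ generate a "generic enough" multiplicative structure; concretely, the only eigenvalue of absolute value $>1$ on $\mathrm{H}^{k,k}$ equal to $\lambda_k(f)$ occurs with the expected multiplicity, and — crucially — on $\mathrm{H}^k(X,\R)$ for $k\ge 1$ there is no nonzero $f^*$-invariant class, and more generally $1$ is not an eigenvalue of $f^*$ on $\mathrm{H}^k(X,\R)$ for $1\le k\le 2g-1$.

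For part (i), ergodicity of the Haar measure for the group automorphism $f$ of the torus $X$ is equivalent, by the classical Fourier-analytic criterion (Halmos / Rokhlin), to the statement that the dual action of $f$ on the character lattice $\widehat{X}\cong \Z^{2g}$ has no finite nonzero orbit, i.e. no root of unity among the eigenvalues of $F$ and no invariant sublattice on which $F$ acts with finite order. The absence of roots of unity among eigenvalues of $F$ is exactly the $k=1$ instance of the no-eigenvalue-$1$ statement above (applied also to powers $f^n$), and the absence of a finite-order invariant sublattice is precisely ruled out, in the non-excluded case, by the subtorus dichotomy — a positive-entropy automorphism with such a sublattice would descend. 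So (i) follows once the dichotomy and the eigenvalue analysis are in place.

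For part (ii), I would argue cohomology-degree by cohomology-degree. Fix $k$ and decompose the space $\mathcal{A}^{k,k}$ of smooth $(k,k)$-forms, modulo the kernel of the Birkhoff averages, using the Hodge/de Rham structure: every such form is cohomologous to a unique $f^*$-invariant-up-to-scale harmonic representative plus an exact remainder $d\beta$. On the cohomology class, $\tfrac1N\sum_{j\le N}(f^j)^*/\lambda_k^j$ converges to the projection onto the top eigenspace $\mathrm{E}_k$ by the Jordan-form spectral analysis of $f^*$ on $\mathrm{H}^{k,k}$ (here one uses that $\lambda_k$ is a simple eigenvalue, or at least that its generalized eigenspace is semisimple — this is where the $g\le 4$ classification enters, to exclude nontrivial Jordan blocks at the spectral radius). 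For the exact part, write $(f^j)^*(d\beta)=d\big((f^j)^*\beta\big)$ and use that on the space of $(k-1,k)$- and $(k,k-1)$-forms (or more robustly, on the complement of the top eigendata in the relevant cohomology), the operator $f^*$ has spectral radius strictly less than $\lambda_k$; hence $\tfrac1N\sum_{j\le N}(f^j)^*\beta/\lambda_k^j$ converges to $0$ in the $\mathcal C^\infty$ topology, and $d$ being continuous the exact contribution vanishes in the current limit. Assembling the two pieces gives convergence to an element of $\mathrm{E}_k$.

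The main obstacle is the eigenvalue/Jordan-block analysis underlying the dichotomy: one must show that, for $g\le 4$ and $f$ of positive entropy not descending to a lower-dimensional abelian variety, the action $f^*$ on each $\mathrm{H}^{k,k}(X)$ is semisimple at its spectral radius and has no eigenvalue equal to a root of unity on any $\mathrm{H}^k(X,\R)$ with $k\ge 1$. This is essentially a finite but delicate computation with Salem polynomials of degree $2g\le 8$ and their exterior powers — controlling which products of eigenvalues can have absolute value $1$ or be roots of unity, and ruling out coincidences that would create Jordan blocks — and it is precisely here that the dimension restriction $g\le 4$ is used; I expect this classification to occupy the bulk of the work, with the ergodic-theoretic and current-theoretic arguments above being comparatively formal once it is available.
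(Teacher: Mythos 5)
Your treatment of the dichotomy and of part (i) is essentially the paper's: the authors prove an integrability criterion (a root of unity among the analytic eigenvalues forces $X$ to split up to isogeny, so $f$ descends), and ergodicity of the Haar measure then follows from the absence of roots of unity among the eigenvalues on $\mathrm{H}^1$ via the classical Fourier-analytic criterion you invoke. One caveat: your stronger claim that $1$ is never an eigenvalue of $f^*$ on $\mathrm{H}^k(X,\R)$ for $1\le k\le 2g-1$ is false precisely in the non-hyperbolic case the theorem must handle --- if $|\rho_2|=1$ then $dz_2\wedge d\bar z_2$ is an $f^*$-invariant class in $\mathrm{H}^{1,1}$. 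Ergodicity only needs the statement on $\mathrm{H}^1$, so part (i) survives, but this foreshadows the problem below.

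The genuine gap is in part (ii). You propose to write $\Omega$ as a harmonic representative plus an exact remainder and to kill the remainder by a spectral-gap argument. This fails twice over. First, $\Omega$ is explicitly not assumed closed, so it has no cohomology class and no such decomposition. Second, there is no spectral gap in the relevant case: when the analytic eigenvalues satisfy $|\rho_1|=\lambda>1$, $|\rho_2|=\dots=|\rho_{g-1}|=1$, $|\rho_g|=\lambda^{-1}$ (the Salem case that the dimension-$\le 4$ classification produces), the constant forms $dz_I\wedge d\bar z_J$ with $1\in I\cap J$ but $I\neq J$ are eigenvectors of $f^*$ with eigenvalue of modulus exactly $\lambda_k(f)$, and the spectral radius of $f^*$ on $(k-1,k)$-forms also equals $\lambda_k(f)$ for $k\ge 2$; so neither the off-diagonal constant terms nor the ``exact'' part decays for norm reasons. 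These off-diagonal terms are where the analytic content of the theorem sits: the paper shows that the averages $\frac1N\sum_{j\le N}(\omega_{I,J}\circ f^j)\,e^{ij(\theta_{I'}-\theta_{J'})}$ vanish by applying Von Neumann's ergodic theorem to the unitary operators $e^{i(\theta_{I'}-\theta_{J'})}f^*$ on $L^2(\mu)$ and then proving, by a Fourier-coefficient computation on the dual lattice (using that $A^n\cdot l\to\infty$ and that Fourier coefficients of smooth functions decay), that the resulting invariant function is zero. Your proposal omits this step entirely --- you describe the ergodic arguments as ``comparatively formal'' --- and the spectral-gap mechanism you substitute for it is not available, so the limit in (ii) is not established by your argument.
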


The convergence of non-closed forms to a closed one already occurs in complex dynamics in the case of polynomial diffeomorphisms of $\C^2$ \cite{bedford_smillie} whereas the convergence of $(k,k)$ closed currents towards a unique closed positive current was known for endomorphisms having $\lambda_k(f)$ as a dominant dynamical degree \cite[Corollary 4.3.5]{dinh_sibony_super}.

The core of our proof lies in a classification of all dynamical degrees of automorphisms on simple abelian varieties up to dimension $4$ (see chapter \ref{dim4}). We obtain that an automorphism of positive entropy is either cohomologically hyperbolic or the spectral radius of the pullback action on the de Rham cohomology ${\rm H}^{1}(X)$ is a Salem number. 
The crucial step in the proof of assertion (ii) is to exploit the fact that Galois conjugates of Salem numbers that are on the unit circle have $\mathbb{Q}$-independent arguments together with the Von-Neumann ergodic theorem.


Besides knowing the shape of the dynamical degrees sequence, we ask  which algebraic numbers are dynamical degrees of automorphisms on abelian varieties. We focus on simple abelian varieties, since every algebraic integer can be realized on   non-simple ones (see Example \ref{ex_simple}) and because they give rise to non fibration preserving automorphisms \cite{oguiso_simple,amerik_campana,ghioca_scanlon}.
We shall construct  automorphisms using the algebraic structure of the endomorphism algebras. This method was used to classify  abelian surfaces having an infinite automorphisms group \cite{ghys_94} (consequently complex surfaces having infinitely many Anosov diffeomorphisms \cite{ghys_95}) and  to determine the asymptotic growth of the number of fixed-points of an endomorphism \cite{bauer_herrig,alvarado_auffarth}. 

In \cite{herrig_19}, the second author showed that a simple abelian variety $X$ whose endomorphism algebra ${\rm End}(X)\otimes \mathbb{Q}$ is a totally real number field, a totally definite quaternion algebra or a CM-field does not allow an endomorphism whose entropy is a Salem number. The remaining simple abelian varieties to consider are the ones with totally indefinite quaternion multiplication or of the second kind.
We obtain the following. 


\begin{thmb}\label{thm2} 
Let $\lambda$ be a Salem number of degree $g$. 
\begin{itemize}
\item[(i)] There exists an automorphism $f$ on a $g$-dimensional simple abelian variety $X$ with totally indefinite quaternion multiplication with dynamical degrees $$\lambda_1(f)=\ldots=\lambda_{g-1}(f)=\lambda^2\quad \textit{ and }\quad \lambda_0(f)=\lambda_g(f)=1.$$
\item[(ii)] There exists an automorphism $f$ on a $2g$-dimensional simple abelian variety $Y$ with second kind multiplication with dynamical degrees $$\lambda_1(f)=\lambda_{2g-1}(f)=\lambda^2, \quad \lambda_2(f)=\ldots=\lambda_{2g-2}(f)=\lambda^4 \quad and \quad \lambda_0(f)=\lambda_{2g}=1.$$ The endomorphism algebra ${\rm End}_\mathbb{Q}(Y)$ is a quaternion algebra whose center is a CM-field of degree $g$.
\end{itemize}
\end{thmb}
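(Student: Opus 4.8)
The plan is to realise $f$ through the left regular representation of a suitable endomorphism algebra: $H_1$ of the variety will be that algebra itself, with $f$ acting as left multiplication by a unit whose reduced characteristic polynomial is the minimal polynomial $P(x)=x^2-(\lambda+\lambda^{-1})x+1$ of $\lambda$ over the totally real field $K:=\mathbb{Q}(\lambda+\lambda^{-1})$. Since a Salem polynomial is reciprocal of even degree $g\geq 4$, we have $[K:\mathbb{Q}]=g/2$, and $P$ has the root $\lambda>1$ at one distinguished real place of $K$ and a unimodular conjugate pair at each of the remaining $g/2-1$ real places.

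First I would build the algebra. For (i), put $L:=\mathbb{Q}(\lambda)$, quadratic over $K$; by Chebotarev pick two finite places of $K$ inert in $L/K$ and let $B$ be the quaternion algebra over $K$ ramified exactly there. Then $B$ is a division algebra, split at every archimedean place (hence totally indefinite), and $L\hookrightarrow B$ because $L/K$ is non-split at both ramified places. Fixing such an embedding and setting $\beta:=\lambda\in B$ gives $\mathrm{Nrd}(\beta)=N_{L/K}(\lambda)=1$, $\mathrm{Trd}(\beta)=\lambda+\lambda^{-1}$, with $\beta^{-1}=(\lambda+\lambda^{-1})-\beta$ integral. For (ii), replace $K$ by a totally imaginary quadratic extension $F:=K(\sqrt{-d})$ ($d\in K$ totally positive), a CM field of degree $g$, replace $L$ by the compositum $FL$ (quadratic over $F$, split at every archimedean place of the totally complex field $F$), take $D$ to be a quaternion division algebra over $F$ ramified at two finite places inert in $FL/F$, embed $FL\hookrightarrow D$, and set $\delta:=\lambda\in D$, so again $\mathrm{Nrd}(\delta)=1$ and $\mathrm{Trd}(\delta)=\lambda+\lambda^{-1}\in K\subset F$. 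In either case fix an order $\mathcal{O}$ containing the chosen element and its inverse.

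Next the abelian variety. On $V:=B\otimes_{\mathbb{Q}}\mathbb{R}\cong\prod_{v\mid\infty}M_2(\mathbb{R})$ (resp. $D\otimes_{\mathbb{Q}}\mathbb{R}\cong\prod_{v\mid\infty}M_2(\mathbb{C})$), a complex structure commuting with left multiplication is right multiplication by a family $(J_v)$ with $J_v^2=-1$. By the classical theory of abelian varieties of prescribed Albert type, one can choose such a $J$ inside the associated symmetric space so that $X:=(V,J)/\mathcal{O}$ (resp. $Y$) is a polarised abelian variety with $\mathcal{O}\subseteq\mathrm{End}(X)$ — of dimension $g$ (resp. $2g$) — and, taking $J$ Hodge-generic in that family, with $\mathrm{End}_{\mathbb{Q}}(X)=B$ (resp. $\mathrm{End}_{\mathbb{Q}}(Y)=D$, whose center is the CM field $F$ of degree $g$). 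Because left and right multiplication commute, $\beta$ (resp. $\delta$) is an invertible element of $\mathrm{End}(X)$ (resp. $\mathrm{End}(Y)$), hence an automorphism $f$; and because $B$ (resp. $D$) is a division algebra, $V$ is simple over it, so $X$ (resp. $Y$) has no proper abelian subvariety. I expect the main obstacle to be precisely this step: producing a $J$ for which $(V,J)/\mathcal{O}$ carries a polarisation compatible with the quaternion (resp. CM-quaternion) multiplication, and checking that a generic such $J$ introduces no extra endomorphisms — the former is the construction of abelian varieties with a given Albert invariant, the latter a Mumford–Tate computation.

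Finally the dynamical degrees. The automorphism $f$ acts on $T_0X=(V,J)^{1,0}$ by left multiplication by $\beta$ (resp. $\delta$); splitting this space over the archimedean places of $K$ (resp. $F$), the action over a given place is the standard representation of the $2\times 2$ matrix image of the element, with eigenvalues the roots of $P$ at that place — $\{\lambda,\lambda^{-1}\}$ at the distinguished place, a unimodular conjugate pair elsewhere. Thus in case (i) the eigenvalues on $H^{1,0}(X)$ are $\lambda$, $\lambda^{-1}$ and the $g-2$ unimodular conjugates of $\lambda$; in case (ii) (where, over a complex place, $M_2(\mathbb{C})$ contributes each matrix eigenvalue with multiplicity two) the distinguished factor contributes $\lambda$ and $\lambda^{-1}$ each with multiplicity two — independently of the chosen CM-type, since $\lambda$ is real — while the other factors contribute only unimodular eigenvalues, so the $2g$ eigenvalues on $H^{1,0}(Y)$ have absolute values $\lambda,\lambda$, then $2g-4$ ones, then $\lambda^{-1},\lambda^{-1}$. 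Since $\lambda_k(f)=\big(\prod_{i\le k}|\alpha_i|\big)^2$ for the eigenvalues $|\alpha_1|\ge|\alpha_2|\ge\cdots$ of $f$ on $H^{1,0}$, the sequences asserted in (i) and (ii) follow immediately, as does $\lambda_0=\lambda_g=1$ in (i) and $\lambda_0=\lambda_{2g}=1$ in (ii).
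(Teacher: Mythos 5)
Your overall architecture is the same as the paper's: embed the degree-$g$ field generated by a root of the Salem polynomial into a division quaternion algebra over $\mathbb{Q}(\lambda+\lambda^{-1})$ (resp.\ over a CM quadratic extension of it), invoke the Shimura/Birkenhake--Lange construction of abelian varieties with prescribed Albert-type multiplication, and read the dynamical degrees off the analytic eigenvalues. Your eigenvalue computation via the left regular representation is correct and is in fact a cleaner route than the paper's, which extracts the same multiset $\{\lambda,\lambda^{-1},\text{unimodular conjugates}\}$ (with the right multiplicities) by comparing the two Lefschetz fixed-point formulas. Your construction of $B$ also differs in a harmless way: you prescribe the ramification set and use the embedding criterion for the splitting field $L$, whereas the paper writes $B=\bigl(\frac{a,\,p}{F}\bigr)$ explicitly (so that $F(\sqrt a)$ sits inside by construction) and uses $\check{\textnormal{C}}$ebotarev only to force ramification at a place above $p$.

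There is, however, a concrete gap in part (ii). For $D$ to be the endomorphism algebra of an abelian variety it must be an Albert type IV algebra, i.e.\ it must carry a \emph{positive involution of the second kind} restricting to complex conjugation on the CM center $F$. Such an involution exists if and only if $\mathrm{cor}_{F/K}(D)$ is trivial in $\mathrm{Br}(K)$, equivalently if and only if the ramification set of $D$ is stable under the complex conjugation of $F/K$ (each place of $K$ must carry an even number of ramified places of $F$ above it). Choosing ``two finite places of $F$ inert in $FL/F$'' does not ensure this: if they lie over two distinct split primes of $K$, then $\mathrm{cor}_{F/K}(D)$ is a nontrivial quaternion algebra over $K$, no second-kind involution exists, and the ``classical theory'' you appeal to simply does not apply to $D$. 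The repair is easy but must be made: take a prime of $K$ that splits in $F/K$ and is inert in $L/K$ (positive density in the biquadratic extension $FL/K$) and ramify $D$ exactly at the two conjugate places of $F$ above it; or do as the paper does and set $D=B\otimes_F F(\sqrt{-d})$, a base change from the totally real subfield, for which the conjugation-stability of the ramification and the extension of the positive involution are automatic (the paper's corresponding lemma instead checks that $F(\sqrt{-d})$ does not embed into $B$, so that the base change stays divisional). A smaller point: simplicity of $X$ (resp.\ $Y$) does not follow from ``$V$ is simple over $B$'' but from the fact that a generic member of the family has $\mathrm{End}_{\mathbb{Q}}$ equal to the division algebra itself, which forces a single isogeny factor by Poincar\'e reducibility; you do assert the genericity statement, so this is a matter of phrasing rather than substance.
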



The proof of this theorem yields a more general result which is stated in Corollary \ref{thm2}.
\\
The core step in our proof is the construction of a simple abelian variety, or equivalently of a suitable divisional quaternion algebra (Theorem \ref{thm_divisional_salem}) which relies on a local-global principle for quaternion algebras combined with $\check{\textnormal{C}}$ebotarev's density theorem. 
\\
The third main result of this paper is a complete classification of the dynamical degrees of automorphisms on simple abelian varieties up to dimension $4$. Our classification draws a parallel between the endomorphism algebras ${\rm End}_{\mathbb{Q}}(X)$ and the number field $\mathbb{Q}(f)$ generated by the automorphism $f$ with the dynamical spectra $\Lambda(f)$.
 The detailed classification can be found in the tables 1 to 4 and the figures 1 to 4 in chapter \ref{dim4}.


\black

\section*{Acknowledgments}

This collaboration might have never happened without the help of Robert Lazarsfeld who put us in contact, we are thus thankful for his encouragement. We also thank Dzmitry Dudko and Alena Erchenko for helpful discussions, Tien-Cuong Dinh and Charles Favre for valuable remarks and comments and John Voight for answering questions on quaternion algebras. The second author thanks the Stony Brook Math department and Simons Center for a fruitful atmosphere and good hospitality during his stay.

\section{Abelian varieties}

We briefly introduce abelian varieties and relevant facts about their endomorphisms. More details can be found in \cite{birkenhake_lange}. 

A complex torus $X$ of dimension $g$ is a quotient group $\mathbb{C}^g/\Lambda$, where $\Lambda\subseteq\mathbb{C}^g$ is a lattice. The tori that can be embedded into the projective space $\mathbb{P}_{\mathbb{C}}^N$ are exactly \textit{abelian varieties} and they are called \textit{simple} if the only   subtori are the trivial ones $0$  and $X$.


\subsection{Endomorphisms}\label{p1}

Every holomorphic map $h\colon X\longrightarrow X$ on a $g$-dimensional complex torus $X$ is the sum of a translation and a unique endomorphism $f\colon X\longrightarrow X$ (with respect to the additive structure of $X$). 
For such an endomorphism $f$ we have the analytic and rational representation
$$\rho_a\colon {\rm End}(X)\longrightarrow {\rm M}_g(\mathbb{C}) \quad {\rm and} \quad \rho_r\colon {\rm End}(X)\longrightarrow {\rm M}_{2g}(\mathbb{Z}).$$ 
The eigenvalues of the matrices $\rho_a(f)$ and $\rho_r(f)$ will be called analytic and rational eigenvalues in the following. The two representations have the useful relation $\rho_r\otimes 1 \simeq \rho_a\oplus\overline{\rho_a}$ which can be found in a concrete expression of the Holomorphic Lefschetz Fixed-Point Formula (cf. \cite{birkenhake_lange_FP})
$$\#{\rm Fix}(f)=\det({\rm id}-\rho_r(f))=\big|\prod_{i=1}^g(1-\rho_i)(1-\overline{\rho_i})\big|,$$ where $\rho_1,\ldots,\rho_g$ are the analytic eigenvalues. We will see later how these eigenvalues determine the dynamical degrees and the entropy of the endomorphism $f$.

We will focus on endomorphisms of simple abelian varieties. By Poincar${\rm \acute{e}}$'s Complete Reducibility Theorem, the endomorphism algebra $B:={\rm End}_\mathbb{Q}(X)={\rm End}(X)\otimes \mathbb{Q}$ of a $g$-dimensional abelian variety $X$ has to be a skew field of finite $\mathbb{Q}$-dimension with a positive anti-involution $x\mapsto x'$, the Rosati involution (with respect to the polarization $L$ of $X$). Albert's classification determines the possibilities for the pairs $(B, \,')$. We denote $F$ the center of $B$ and $F_0$ the fixed field of the anti-involution $'$ restricted to $F$. A pair $(B,\,')$ is called of the first kind if $'$ is trivial on $F$ and of the second kind otherwise. 

Denote $$[B:F]=d^2, \quad [F:\mathbb{Q}]=e \quad {\rm and} \quad [F_0:\mathbb{Q}]=e_0,$$




then the classification gives the following restrictions.

\begin{prop}\label{class}{\rm (\cite[Proposition 5.5.7]{birkenhake_lange})}
\renewcommand*\arraystretch{1.4}
$$\begin{array}{|c|c|c|c|}
\hline
B={\rm End}_\mathbb{Q}(X) & d & e_0  & \text{restriction}\\
\hline
\text{totally real number field} & 1 & e & e|g \\
\hline
\text{totally indefinite quaternion algebra} & 2 & e & 2e|g \\
\hline
\text{totally definite quaternion algebra} & 2 & e & 2e|g \\
\hline
(B, \,'\,) \text{ of the second kind} & d & \frac{1}{2}e & e_0d^2|g \\
\hline

\end{array}$$
\end{prop}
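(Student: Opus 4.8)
The statement has two layers: the structural data in the columns $d$ and $e_0$ together with the list of the four types, and the divisibility in the last column. The first layer is precisely Albert's classification of finite-dimensional division $\mathbb{Q}$-algebras carrying a positive anti-involution, which I would take as the main input and whose mechanism I would recall as follows. Positivity of the Rosati involution means the trace form $x\mapsto\Tr(xx')$ on $B$ is positive definite; restricting it to the fixed field $F_0$, where $'$ is the identity, shows $a\mapsto\operatorname{Tr}_{F_0/\mathbb{Q}}(a^2)$ is positive definite, hence $F_0$ is totally real. If $'$ is trivial on $F$ (first kind) then $F=F_0$, so $e_0=e$; analysing the induced involution on the simple factors of $B\otimes_\mathbb{Q}\mathbb{R}$ then forces $d=1$, i.e.\ $B=F_0$, or $d=2$ with $B$ a quaternion algebra over $F_0$, and in the latter case $B$ is totally indefinite or totally definite according as $B\otimes_{F_0,\sigma}\mathbb{R}\cong M_2(\mathbb{R})$ or $\cong\mathbb{H}$ at every real place $\sigma$ of $F_0$. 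If $'$ is nontrivial on $F$ (second kind) then $'|_F$ is an automorphism of order $2$ with fixed field $F_0$, so $[F:F_0]=2$ and $e_0=\tfrac12 e$, and positivity of $'|_F$ forces $F$ to be a CM field. This determines the first two columns in every row.

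For the divisibility I would argue via the rational representation. Set $V:=H_1(X,\mathbb{Q})$, a $\mathbb{Q}$-vector space of dimension $2g$ which is a left $B$-module through $\rho_r$. Since $B$ is a skew field, $V$ is a free $B$-module, say $V\cong B^{\oplus m}$, and comparing $\mathbb{Q}$-dimensions gives $2g=m\dim_\mathbb{Q}B=m\,d^2e$. In the totally indefinite and totally definite quaternion rows $d=2$ and $e=e_0$, so $4e\mid 2g$, i.e.\ $2e\mid g$. In the second-kind row $e=2e_0$, so $2d^2e_0\mid 2g$, i.e.\ $e_0d^2\mid g$. This settles three of the four rows at once.

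The totally real row needs one further observation, since there $\dim_\mathbb{Q}B=e$ only yields $e\mid 2g$, weaker than the claimed $e\mid g$. In this case $V$ is an $F_0$-vector space, of dimension $n:=2g/e$. Extending scalars and using that $F_0$ is totally real, $F_0\otimes_\mathbb{Q}\mathbb{R}\cong\mathbb{R}^e$ and $V\otimes_\mathbb{Q}\mathbb{R}=\bigoplus_{\sigma}V_\sigma$ decomposes over the $e$ real embeddings $\sigma$ of $F_0$, each summand $V_\sigma$ of real dimension $n$. But $V\otimes_\mathbb{Q}\mathbb{R}$ is the real vector space underlying the tangent space $\mathbb{C}^g$ of $X$, and its complex structure $J$ commutes with the $B$-action, because endomorphisms of $X$ act $\mathbb{C}$-linearly there (compatibly with $\rho_r\otimes 1\simeq\rho_a\oplus\overline{\rho_a}$). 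In particular $J$ commutes with $F_0$, hence preserves each $V_\sigma$, which therefore carries a complex structure; so $n=\dim_\mathbb{R}V_\sigma$ is even, i.e.\ $2g/e$ is even, i.e.\ $e\mid g$.

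The genuine obstacle is the first part, Albert's classification itself: the positivity arguments ruling out $d\geq 3$ when $'$ is of the first kind, and forcing $F$ to be a CM field when $'$ is of the second kind. Both rest on classifying the possible involutions on the simple components of $B\otimes_\mathbb{Q}\mathbb{R}$ and tracking positive-definiteness of the trace form through that classification; I would cite this rather than redo it. Granting it, what remains — a free-module dimension count plus the remark that the complex structure commutes with a totally real center — is routine, the only point deserving attention being that this last remark is exactly what upgrades $e\mid 2g$ to $e\mid g$ in the totally real row, the other three rows following immediately from $\dim_\mathbb{Q}B\mid 2g$.
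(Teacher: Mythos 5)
Your proof is correct. The paper itself gives no argument for this proposition --- it is quoted directly from Birkenhake--Lange (Proposition 5.5.7) --- and your reconstruction is exactly the standard one from that source: cite Albert's classification for the structure of $(B,{}')$ and the values of $d$ and $e_0$, then get three of the four divisibility conditions from the free-module count $2g=m\,d^2e$ on $H_1(X,\mathbb{Q})$, and upgrade $e\mid 2g$ to $e\mid g$ in the totally real row by observing that the complex structure preserves each of the $e$ real eigenspaces of the $F_0$-action on the tangent space, forcing each to have even real dimension. Both the dimension count and that last refinement are stated accurately, so there is nothing to correct.
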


The analytic eigenvalues are related to  the first fixed-point formula and to a particular norm (cf.\cite{birkenhake_lange_FP}). Let ${\rm N} \colon B\longrightarrow \mathbb{Q}$ be the reduced norm map and $f\in{\rm End}(X)$ an endomorphism of a simple abelian variety $X$, then we have $$\#{\rm Fix}(f)=\big({\rm N}(1-f)\big)^{\frac{2g}{de}}.$$

Simple abelian varieties whose endomorphism algebras are totally indefinite quaternion algebras or of the second kind are of special interest for us, we will discuss their algebraic structures in the next section.


%
%
%
%
%
%
%
%

%
%

\subsection{Quaternion algebras  and Salem numbers}


 We focus on the main properties of quaternion algebras and their commutative subalgebras.
More details on quaternion algebras can be found in \cite{voight}, \cite{gille_szamuely} and \cite{vigneras}.
\medskip

A quaternion algebra $B=\big(\frac{a,\, b}{F}\big)$ over a field $F$ (of {\rm char}$\neq 2$) with $a, b\in F^{\times}$ is an $F$-vector space with a basis $1, i, j, ij$ such that the equations $$i^2=a, \quad j^2=b \quad {\rm and} \quad ij=-ji$$ hold.

Let $F$ be a totally real number field and $\mathbb{H}=\big(\frac{-1,\,-1}{\mathbb{R}}\big)$ denote the Hamiltonian quaternions. A quaternion algebra $B$ over $F$ is called \textit{totally definite} if we have $$B\otimes_\sigma \mathbb{R}\simeq \mathbb{H}$$ for all embeddings $\sigma\colon F\hookrightarrow \mathbb{R}$. If we have $$B\otimes_\sigma\mathbb{R}\simeq {\rm M}_2(\mathbb{R})$$ for all embeddings $\sigma\colon F\hookrightarrow \mathbb{R}$, then we call $B$ \textit{totally indefinite}.

In the situation $(B,\,')$ of the second kind we will consider quaternion algebras whose center is a CM-field $K$, i.e. a quadratic extension of a totally real number field that is totally imaginary. In this case the restriction of the anti-involution $'$ to $K$ will correspond to the complex conjugation.

The endomorphism ring ${\rm End}(X)$ of a simple abelian variety $X$ is an order $\mathcal{O}$ in the endomorphism algebras $B$, i.e. a lattice with a ring structure.

We are especially interested in simple abelian varieties and their endomorphism algebras which have to be skew fields. A quaternion algebra $B=\big(\frac{a,\,b}{F}\big)$ is divisional if and only if it is not \textit{split}, i.e. the algebra $B$ is not isomorphic to the matrix ring ${\rm M}_2(F)$. To construct a divisional quaternion algebra we will later use a local-global principle which is formulated in terms of the places $v$ (we will use the same notation for the corresponding valuation) of the center $F$. We denote $F_v$ to be the completion of $F$ with respect to the valuation $v$. We say that $B$ is \textit{ramified} at $v$ if $$B_v=B\otimes_FF_v$$ is divisional. Otherwise, $B$ is called \textit{split} at $v$. In these terms, $B$ is totally definite if it ramifies at all archimedean places and totally indefinite if it splits at all archimedean places. In the following, the set ${\rm Ram}(B)$ denotes the set of ramified places of $B$. By the local-global principle for quaternion algebras (see \cite[Corollary 14.6.5]{voight}), we get the useful equivalences $$B\: {\rm divisional} \Leftrightarrow B\not\simeq {\rm M}_2(F) \Leftrightarrow {\rm Ram}(B)\neq\emptyset.$$

So, for the construction of a totally indefinite quaternion algebra $B$ we need a non-archimedean place $v$ such that $B$ ramifies at $v$.

For the mentioned fixed-point formulas we need the \textit{reduced norm} ${\rm N}\colon B\longrightarrow F$ defined by ${\rm N}(y)=y\cdot\overline{y}$, where $\overline{\alpha+\beta i+ \gamma j+ \delta ij}=\alpha-\beta i-\gamma j- \delta ij$ is the quaternion conjugation. The \textit{reduced trace} ${\rm T}(y)$ is defined by $y+\overline{y}$ such that we get the \textit{reduced characteristic polynomial} $$X^2-{\rm T}(y)X+{\rm N}(y)$$ of $y$ over $F$.
\\
\\
The endomorphism algebra of a simple abelian variety can also be a totally real number field or a CM-field. A number field $F$ is \textit{totally real} if the image of all its embeddings into the complex numbers lies in the real numbers. Further, all subfields of $F$ are again totally real (see \cite[Lemma 1.1]{herrig_19}). A \textit{CM-field} $K$ is a quadratic extension of a totally real number field $F$, but $K$ is totally imaginary, i.e., the image of no embedding of $K$ into the complex numbers lies in the real numbers. The normal closure of a CM-field is again a CM-field and subfields of a CM-field are either totally real or CM-fields (see \cite[Lemma 1.3]{herrig_19}). By Theorem 3 in \cite{daileda}, the unimodular units of CM-fields are always roots of unity.
\\
\\
The existence of Salem numbers or their algebraic conjugates in the endomorphism algebra will play an important role later. More details on these numbers can be found in \cite{bdgps}. Depending on the purpose of a paper, some authors allow real quadratic numbers to be Salem. We use the original definition introduced by Rapha\"{e}l Salem in \cite[p. 26]{salem}. A number $\lambda\in\mathbb{R}$ is a \textit{Salem number} if it is an algebraic integer larger than $1$ whose conjugates lie inside or on the unit circle, assuming that at least one conjugate actually lies on the unit circle. A number field $\mathbb{Q}(\lambda)$ is a real quadratic extension of the totally real number field $\mathbb{Q}(\lambda+\lambda^{-1})$ (see \cite[Theorem 5.2.3]{bdgps}) and obviously not a CM-field. Further, the conjugate roots of modulus $1$ are not roots of unity. 

Pisot numbers are similar to Salem number and also define a special class of numbers. A \textit{Pisot number} $\tau\in\mathbb{R}$ is an algebraic integer larger than $1$ whose conjugates lie inside the unit circle. They are not as rare as Salem numbers, because by Theorem 5.2.2 in \cite{bdgps}, every real algebraic extension of degree $n$ over the rationals contains infinitely many Pisot numbers of degree $n$, while some of them are also units.





\subsection{Dynamical degrees and entropies}

In this section, we briefly introduce the notions of topological entropy and the $k$-th dynamical degree for an endomorphism on an abelian variety. 

If $X$ is a compact K\"ahler manifold and $f\colon X\longrightarrow X$ a holomorphic map, then  Gromov \cite{gromov_03} and Yomdin \cite{yomdin_87}'s result shows that the \textit{topological entropy} ${\rm h_{top}}(f)$ (see \cite[\S 8.2]{katok}) is given by the equation $$\max_{1\leq k\leq n}\log\rho(f^*\colon\textnormal{H}^{k,k}(X)\longrightarrow \textnormal{H}^{k,k}(X))=\log(\gamma),$$ where $\rho$ stands for the spectral radius and ${\rm H}^{k,k}(X)$ for the $(k,k)$-Dolbeaut cohomology. 
\smallskip

The \textit{$k$-th dynamical degree} $\lambda_k(f)$ \textit{of $f$}  is by definition the spectral radius of $f^*$ on ${\rm H}^{k,k}(X)$.
\smallskip

In our setting, the $k$-th dynamical degree of an automorphism can be computed as follows:
 
The image of a non-zero endomorphism $f\colon X\longrightarrow X$ of an abelian variety $X$ is non-zero and defines an abelian subvariety. Considering an automorphism $f$ or a simple abelian variety $X$, the analytic representation $\rho_a(f)$ has to be an isomorphism of the universal cover and all eigenvalues are non-zero. Furthermore, the action $f^*$ of $f$ on the cohomology group ${\rm H}^1(X,\mathbb{C})$ is given by $$^t\rho_a(f)\oplus ^t\overline{\rho_a(f)}$$ which induces the action of $f$ on all cohomology groups by the isomorphism $$\bigwedge^k {\rm H}^1(X,\mathbb{C})\simeq {\rm H}^k(X,\mathbb{C}).$$
 The eigenvalues of $^t\rho_a(f)\oplus ^t\overline{\rho_a(f)}$ correspond to the eigenvalues of $\rho_r(f)$ which we denote by $\rho_1,\ldots,\rho_{2g}$ with $g=\dim X$. 
 The spectral radius $\rho$ of $f^*$ on ${\rm H}^{k,k}(X,\mathbb{C})$ is thus given by the largest product of $2k$ pairwise distinct eigenvalues $\rho_i$.
We will explain in \S \ref{p1} on how one can compute the analytic eigenvalues using the two fixed-point formulas introduced.

\medskip
%
%
 
We restrict on the properties of the dynamical degrees of automorphisms  on simple abelian varieties, the reason is that there are no obstructions when one works on non-simple abelian varieties.  

\begin{ex}\label{ex_simple}
Considering the non-simple case, we can start with an integral polynomial $$P(t)=(-1)^n(t^n+a_{n-1}t^{n-1}+\ldots+a_1t+a_0)\in\mathbb{Z}[t]$$ and its companion matrix $$f=\begin{pmatrix}0 &  & \ldots & & 0 & -a_0 \\ 1 & 0 & \ldots & & 0 & -a_1 \\  & 1 & \ddots & 0 & & \vdots \\ & & \ddots & & 0 & \vdots \\ & 0 &  & & 1 & -a_{n-1}\end{pmatrix}$$ which defines an endomorphism on the self-product $E^n$ of an elliptic curve $E$. Hence, we are able to create every algebraic integer as an analytic eigenvalue of an endomorphism on an abelian variety. Automorphisms are obviously generated by $a_0=1$.
\end{ex}

When one restricts to  simple abelian varieties with multiplication by a totally indefinite quaternion algebra and of the second kind, the situation is more constrained and we have (see \cite[Proposition 2.2, Proposition 2.3 and Proposition 2.3]{herrig_19}):  

\begin{cor}
The analytic eigenvalues of simple abelian varieties with real, totally definite quaternion or complex multiplication are either all of absolute value $1$ or none is of absolute value $1$.
\end{cor}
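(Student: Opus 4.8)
The plan is to reduce the whole question to a statement about the single algebraic integer $f$, regarded inside the commutative number field $L:=\mathbb{Q}(f)\subseteq B:=\mathrm{End}_\mathbb{Q}(X)$, and then to exploit the rigidity of totally real and CM number fields. Fix a nonzero $f\in\mathrm{End}(X)$; since $X$ is simple, $f$ is an isogeny and all its analytic eigenvalues are nonzero. The first point is that $L$ is always totally real or a CM-field. If $B$ is a totally real field this is immediate; if $B$ is a CM-field it is \cite[Lemma 1.3]{herrig_19}; and if $B$ is a totally definite quaternion algebra over the totally real field $F$, then $F(f)\subseteq B$ is commutative of degree $\leq 2$ over $F$ (as $f$ is killed by a quadratic polynomial over $F$, namely its reduced characteristic polynomial), and for each real place $\sigma$ of $F$ the $\mathbb{R}$-algebra $F(f)\otimes_\sigma\mathbb{R}$ embeds in $B\otimes_\sigma\mathbb{R}\cong\mathbb{H}$, so it has no zero divisors and must be $\mathbb{R}$ or $\mathbb{C}$; hence $F(f)$ is $F$ or a totally imaginary quadratic extension of $F$, i.e. $F$ or a CM-field, so $L\subseteq F(f)$ is totally real or CM.

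Next, every analytic eigenvalue of $f$ is a $\mathbb{Q}$-conjugate of $f$. Indeed $\mathbb{Q}^{2g}$, with the $\rho_r$-action, is an $L$-vector space on which $f$ acts by multiplication, so the characteristic polynomial of $\rho_r(f)$ equals $m_f^{\,2g/\deg m_f}$, where $m_f$ is the (monic, integral, irreducible) minimal polynomial of $f$; since $\rho_r\otimes 1\simeq\rho_a\oplus\overline{\rho_a}$, the analytic eigenvalues $\rho_1,\ldots,\rho_g$ lie among the roots of $m_f$. In particular, if $|\rho_i|=1$ for some $i$, then some $\mathbb{Q}$-conjugate $\beta$ of $f$ has $|\beta|=1$.

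It therefore suffices to prove: if one conjugate $\beta$ of $f$ has $|\beta|=1$, then all conjugates of $f$ do. If $L$ is totally real, $\beta$ is real, hence $\beta=\pm 1\in\mathbb{Q}$; being a conjugate of $f$, this forces $m_f=t\mp 1$, so $f=\pm 1$ and all conjugates have absolute value $1$. If $L$ is a CM-field, let $c$ be its CM-involution, $L_0=L^{c}$ the maximal totally real subfield, and $g:=f\,c(f)\in\mathcal{O}_{L_0}$. For every embedding $\phi\colon L\hookrightarrow\mathbb{C}$ one has $\phi(g)=\phi(f)\,\overline{\phi(f)}=|\phi(f)|^{2}>0$, so $g$ is a totally positive algebraic integer; choosing $\phi$ with $\phi(f)=\beta$ gives $\phi(g)=|\beta|^{2}=1$, so the irreducible polynomial of $g$ has the rational root $1$ and must equal $t-1$, i.e. $g=1$. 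Hence $|\phi(f)|^{2}=\phi(g)=1$ for every embedding $\phi$, so all conjugates of $f$ lie on the unit circle. Together with the totally real case, this yields the dichotomy: either all analytic eigenvalues of $f$ have absolute value $1$, or none does.

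The only step that is more than bookkeeping is the CM case above: the key observation is that $N_{L/L_0}(f)$ is a totally positive algebraic integer one of whose $\mathbb{Q}$-conjugates equals the rational number $1$, and is therefore equal to $1$ identically — and this is exactly where the CM hypothesis enters, via $\overline{\phi(x)}=\phi(c(x))$, rather than merely that $L$ is some number field. (One could further invoke Daileda's theorem quoted above to see that $f$ is a root of unity in this unimodular case, though that is not needed.) The quaternion reduction in the first step also needs a little care, but is routine.
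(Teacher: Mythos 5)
Your proof is correct. Note that the paper itself supplies no argument for this corollary: it is quoted directly from Propositions 2.2--2.4 of the second author's earlier paper (herrig\_19), so there is no in-text proof to compare against. Your route is the natural one and is consistent with the machinery the paper sets up later: the analytic eigenvalues of $f$ are roots of the minimal polynomial of $f$ over $\mathbb{Q}$ (this is exactly what the fixed-point formulas in the classification section encode), and $\mathbb{Q}(f)$ is totally real or a CM-field in each of the three cases --- your reduction in the totally definite quaternion case, via $F(f)\otimes_\sigma\mathbb{R}\hookrightarrow\mathbb{H}$ forcing $F(f)\otimes_\sigma\mathbb{R}$ to be $\mathbb{R}$ or $\mathbb{C}$, is sound. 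Your treatment of the CM case, observing that ${\rm N}_{L/L_0}(f)$ is an algebraic integer one of whose conjugates is the rational number $1$ and hence equals $1$ identically, is clean and more elementary than an appeal to Daileda's theorem on unimodular units of CM-fields, which is what the paper's preliminaries suggest as the intended tool.
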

 
\begin{rem} To be precise with the vocabulary: By complex multiplication we mean that the endomorphism algebra is a CM-field and by simple abelian variety of the second kind we mean that the endomorphism algebra defines a division algebra over a CM-field.
\end{rem}

The  previous corollary implies the following.

\begin{cor} Let $f$ be an automorphism of a $g$-dimensional simple abelian variety with real, totally definite quaternion or complex multiplication. Then, we are in one of the following two situations:
\begin{enumerate}
\item[(i)] All dynamical degrees $\lambda_j(f)$ are $1$ and $f$ is of zero entropy.
\item[(ii)] There exists a $j\in\{1,\ldots, g\}$ such that $\lambda_j(f)$ is strictly larger than all other dynamical degrees and for all $k\in\{1,\ldots,g-1\}$ the inequality $\lambda_k(f)\neq \lambda_{k+1}(f)$ holds. In particular, $f$ is of positive entropy.
\end{enumerate}
\end{cor}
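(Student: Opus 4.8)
The plan is to deduce everything from the preceding Corollary, which guarantees that the analytic eigenvalues $\rho_1,\ldots,\rho_g$ of $f$ are either all of absolute value $1$ or all of absolute value $\neq 1$. Combined with the product description of dynamical degrees recalled in \S 1.3 — namely that $\lambda_k(f)$ is the largest product of $2k$ pairwise distinct eigenvalues among $\rho_1,\ldots,\rho_g,\overline{\rho_1},\ldots,\overline{\rho_g}$ — the two cases of the Corollary translate directly into the two asserted situations. First I would treat case (i): if all analytic eigenvalues have absolute value $1$, then every product of $2k$ of the $2g$ numbers $\rho_i,\overline{\rho_i}$ has absolute value $1$, so $\lambda_k(f)=1$ for all $k$; by the Gromov--Yomdin formula $h_{\rm top}(f)=\max_k \log\lambda_k(f)=0$.

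For case (ii), suppose no analytic eigenvalue has absolute value $1$. Since $f$ is an automorphism, $\det\rho_a(f)=\prod_{i=1}^g \rho_i$ is a unit of norm a nonzero rational integer, hence $\prod |\rho_i|^2 = |\det\rho_r(f)| = 1$; so the multiset $\{|\rho_i|,|\overline{\rho_i}|\}$ of the $2g$ rational eigenvalues is symmetric about $1$ and has product $1$. Order the $2g$ moduli as $r_1\geqslant r_2 \geqslant \cdots \geqslant r_{2g}$ with $r_m r_{2g+1-m}=1$ for each $m$ (this pairing being exactly $\rho\leftrightarrow$ its "dual" eigenvalue $1/\bar\rho$, which is again one of the rational eigenvalues because $\rho_r$ is self-dual up to the polarization). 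Then $\lambda_k(f)=r_1 r_2\cdots r_{2k}$, and the strict descent $r_{2k}>1>r_{2g-2k}$ forced by the absence of unit-modulus eigenvalues gives $\lambda_k(f)/\lambda_{k-1}(f) = r_{2k-1}r_{2k}$, a strictly decreasing function of $k$ once it drops below $1$; more precisely, letting $j$ be the largest index with $r_{2j}>1$ (equivalently $r_1\cdots r_{2j} > r_1\cdots r_{2j-2}$), one checks $\lambda_1<\lambda_2<\cdots<\lambda_j$ and $\lambda_j>\lambda_{j+1}>\cdots>\lambda_g=1$, so in particular $\lambda_j(f)$ is strictly larger than every other dynamical degree, $\lambda_k(f)\neq\lambda_{k+1}(f)$ for all $k\in\{1,\ldots,g-1\}$, and $h_{\rm top}(f)=\log\lambda_j(f)>0$.

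The only genuinely delicate point is the bookkeeping in case (ii): one must be careful that the $2g$ rational eigenvalues really do come in reciprocal pairs $r\leftrightarrow r^{-1}$, which I would justify from the compatibility $\rho_r\otimes 1\simeq \rho_a\oplus\overline{\rho_a}$ together with the fact that the Rosati involution sends $f$ to an element whose rational representation is (a conjugate of) $\rho_r(f)^{-1}$ up to the symplectic form coming from the polarization, so the characteristic polynomial of $\rho_r(f)$ is reciprocal. Granting this, the monotonicity of partial products $r_1\cdots r_{2k}$ and the location of its maximum at the index $j$ defined above is elementary. I do not expect any obstruction beyond this; everything else is a direct unwinding of the definitions and the cited Corollary.
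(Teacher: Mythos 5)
Your overall strategy is the same as the paper's: the statement is meant to follow directly from the preceding corollary (all analytic eigenvalues on the unit circle, or none), combined with the description of $\lambda_k(f)$ as the largest product of $2k$ pairwise distinct rational eigenvalues. Case (i) and the general shape of case (ii) are fine. But the step you yourself flag as the delicate one is wrong as stated: the $2g$ rational eigenvalue moduli do \emph{not} in general satisfy $r_m r_{2g+1-m}=1$, because the characteristic polynomial of $\rho_r(f)$ need not be reciprocal. An automorphism does not preserve the polarization (it only pulls one polarization back to another), and the Rosati involution $f\mapsto f'$ does not send $f$ to $f^{-1}$: for real multiplication by a fundamental unit $\epsilon$ of a totally real field, the Rosati involution is trivial, so $\epsilon'=\epsilon\neq\epsilon^{-1}$. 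Concretely, for $F=\mathbb{Q}(\zeta_7+\zeta_7^{-1})$ and $\epsilon=2\cos(2\pi/7)$ the three conjugates have moduli approximately $1.802,\,1.247,\,0.445$, whose inverses form a different set; the sorted list of six rational moduli is $1.802,1.802,1.247,1.247,0.445,0.445$ and $r_1r_6\approx 0.802\neq 1$.

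The repair is easy and uses only facts you already invoke. From $\rho_r\otimes 1\simeq\rho_a\oplus\overline{\rho_a}$, the $2g$ moduli form $g$ \emph{equal} pairs $|\rho_i|=|\overline{\rho_i}|$; since all multiplicities of the distinct values are even, the sorted list satisfies $r_{2k-1}=r_{2k}=|\rho_{(k)}|$, where $|\rho_{(1)}|\geqslant\cdots\geqslant|\rho_{(g)}|$ are the sorted analytic moduli. Hence $\lambda_k(f)/\lambda_{k-1}(f)=|\rho_{(k)}|^2$, which is never $1$ in case (ii) and is weakly decreasing in $k$ (not strictly, as you claim, but weak monotonicity plus never equal to $1$ suffices). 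Together with $\prod|\rho_i|^2=|\det\rho_r(f)|=1$ (so not all moduli lie on one side of $1$), this gives the strict unimodality $\lambda_1<\cdots<\lambda_j>\cdots>\lambda_g=1$ with $j$ the number of analytic eigenvalues of modulus greater than $1$, and $\lambda_j>\lambda_1>1$ gives positive entropy. With that substitution your argument is correct and matches the intended deduction.
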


The existence of automorphisms of positive entropy is general and is a consequence of Dirichlet's unit theorem.

\begin{rem}
 Let $F$ be a totally real number field of degree $r$ or a CM-field of degree $2r$. Then, Dirichlet's unit theorem (see \cite[p. 39--44]{neukirch}) states:  Every order $\mathcal{O}$ in $F$ has exactly $r-1$ fundamental units $\epsilon_1,\ldots,\epsilon_{r-1}$, such that any unit $\epsilon$ can be written uniquely as $$\epsilon=\zeta\cdot\epsilon_1^{m_1}\cdot\ldots\cdot\epsilon_{r-1}^{m_{r-1}}$$ with a root of unity $\zeta$ and integers $m_i$. Taking such an order $\mathcal{O}$, we can construct by \cite[Chapter 9.2 and 9.6]{birkenhake_lange} a simple abelian variety $X$ with real or complex multiplication such that $\mathcal{O}$ is contained in ${\rm End}(X)$. One of the mentioned fundamental units finally corresponds to an automorphism of positive entropy.
 \end{rem}

Let us mention that some situations were realized, 
in \cite[Proposition3.6]{herrig_19}, the following automorphism on a four dimensional simple abelian variety was constructed. 

\begin{ex}
The quaternion algebra $$B=\Bigg(\frac{2,\,-2-2\sqrt{13}}{\mathbb{Q}(\sqrt{13})}\Bigg)$$ is divisional and totally indefinite. There exists a $4$-dimensional simple abelian variety $X$ whose endomorphism ring ${\rm End}(X)$ lies in $B$ and contains an automorphism $f$ whose analytic eigenvalues are the roots of the  Salem polynomial $x^4-x^3-x^2-x+1$. Denote $\lambda$ to be the Salem number of this polynomial, then we get $\lambda_1(f)=\lambda_2(f)=\lambda_3(f)=\lambda^2$ and $\lambda_4(f)=\lambda\cdot\lambda^{-1}=1$.
 \end{ex}


\section{Construction of an automorphism with prescribed entropy}

In this chapter we will prove one of the main results of this paper.

\begin{thm}\label{thm_salem_tot_indef} Let $\lambda$ be a Salem number of degree $g$. 
\begin{itemize}
\item[(i)] There exists an automorphism $f$ on a $g$-dimensional simple abelian variety $X$ with totally indefinite quaternion multiplication with dynamical degrees $$\lambda_1(f)=\ldots=\lambda_{g-1}(f)=\lambda^2\quad \textit{ and }\quad \lambda_0(f)=\lambda_g(f)=1.$$
\item[(ii)] There exists an automorphism $f$ on a $2g$-dimensional simple abelian variety $Y$ with second kind multiplication with dynamical degrees $$\lambda_1(f)=\lambda_{2g-1}(f)=\lambda^2, \quad \lambda_2(f)=\ldots=\lambda_{2g-2}(f)=\lambda^4 \quad and \quad \lambda_0(f)=\lambda_{2g}=1.$$ The endomorphism algebra ${\rm End}_\mathbb{Q}(Y)$ is a quaternion algebra whose center is a CM-field of degree $g$.
\end{itemize}
\end{thm}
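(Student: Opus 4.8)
The plan is to realize the Salem number $\lambda$ inside a suitable quaternion algebra $B$ and then invoke the Albert classification machinery to build the abelian variety. For part (i), let $S(x) \in \mathbb{Z}[x]$ be the minimal (Salem) polynomial of $\lambda$, of degree $g$, and set $L = \mathbb{Q}(\lambda)$, a real field of degree $g$ which is a quadratic extension of the totally real field $L_0 = \mathbb{Q}(\lambda + \lambda^{-1})$ of degree $g/2$. The key idea is that we want $L$ to sit as a maximal commutative subfield of a quaternion algebra $B$ whose center is $L_0$; since $[B:L_0] = 4$ and $[L:L_0] = 2$, this is dimensionally consistent. For $B$ to give a simple abelian variety of totally indefinite type we need: (a) $B$ is a division algebra, (b) $B$ is totally indefinite over $L_0$, i.e. $B \otimes_\sigma \mathbb{R} \simeq M_2(\mathbb{R})$ at every real place $\sigma$ of $L_0$, and (c) $L$ embeds into $B$. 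By the theory of embeddings of fields into quaternion algebras (the local-global/Albert–Brauer–Hasse–Noether criterion, see \cite[\S 14.6]{voight}), $L$ embeds into $B$ iff $L_v := L \otimes_{L_0} (L_0)_v$ is a field (equivalently $v$ does not split in $L$) for every $v \in {\rm Ram}(B)$. The archimedean places of $L_0$ all split in $L$ (since $L$ is totally real), so condition (b) is compatible with (c): we just need to choose a non-empty set of finite ramified places, of even cardinality, each inert or ramified in $L/L_0$.

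The main obstacle — and the step I expect to be the crux, corresponding to what the authors call Theorem \ref{thm_divisional_salem} — is producing such a finite place. Here I would apply $\check{\rm C}$ebotarev's density theorem to the extension $L/L_0$: the set of primes of $L_0$ that are inert (or non-split) in $L$ has positive density, so in particular it is non-empty; pick two such primes $v_1, v_2$ (two, to satisfy the parity constraint $\sum_v {\rm inv}_v(B) = 0$ forced by class field theory — or one such prime together with a real place if we were allowed to ramify there, which we are not, so we genuinely need an even number of finite inert primes, hence at least two). Define $B$ to be the quaternion algebra over $L_0$ with ${\rm Ram}(B) = \{v_1, v_2\}$; this exists and is unique by the local-global principle, it is a division algebra since ${\rm Ram}(B) \neq \emptyset$, it is totally indefinite since it is split at all archimedean places, and $L \hookrightarrow B$ by the embedding criterion. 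One must also verify the Rosati/positivity condition: a totally indefinite quaternion algebra over a totally real field admits a positive anti-involution of the second kind — actually of the first kind here — and by \cite[\S 9.5–9.6]{birkenhake_lange} there is a $g$-dimensional abelian variety $X$ with ${\rm End}_\mathbb{Q}(X) \supseteq$ (an order in) $B$; since $B$ is a division algebra, $X$ is simple (or one passes to a simple isogeny factor whose endomorphism algebra still contains $L$, which forces it to be $X$ itself by dimension count).

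Once $X$ is constructed, the automorphism is produced from $\lambda$ itself: $\lambda$ is a unit in the ring of integers $\mathcal{O}_L$ (it is a Salem number, hence an algebraic unit), so after possibly adjusting the order we may assume $\lambda \in {\rm End}(X)^\times$ defines an automorphism $f$. It remains to compute the dynamical degrees. The analytic representation $\rho_a(f)$ has, as its eigenvalues, the $g$ numbers obtained from the $2g$ rational eigenvalues (the roots of $S$, each appearing with the multiplicity dictated by $\rho_r \otimes 1 \simeq \rho_a \oplus \overline{\rho_a}$); concretely the $2g$ eigenvalues of $f$ on ${\rm H}^1(X,\mathbb{C})$ are the $g$ roots of $S$, each with multiplicity $2$ — this is because $B$ has reduced degree $2$ over its center and $\rho_a$ restricted to $L$ is the sum of the $g/2$ embeddings of $L$... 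I would instead argue directly: $\rho_r(f)$ is conjugate to the companion matrix of $S(x)^2$, so on ${\rm H}^1$ the eigenvalues are $\{\lambda_i, \lambda_i : i = 1,\dots,g\}$ where $\lambda_1 = \lambda$, $\lambda_2 = \lambda^{-1}$, and $\lambda_3,\dots,\lambda_g$ lie on the unit circle. The $k$-th dynamical degree is the largest product of $2k$ of these $2g$ eigenvalues: the top contribution is $(\lambda \cdot \lambda)(\text{next }2k-2\text{ of modulus }1) = \lambda^2$ for every $1 \le k \le g-1$, while $\lambda_g(f)$ multiplies all $2g$ of them, giving $\prod_i \lambda_i^2 = ({\rm Res}\text{-type product}) = 1$ since $S$ is reciprocal. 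This yields $\lambda_1(f) = \cdots = \lambda_{g-1}(f) = \lambda^2$ and $\lambda_0(f) = \lambda_g(f) = 1$, as claimed.

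For part (ii), the strategy is parallel but one works over a CM-field. Let $K = L_0(\sqrt{-D})$ be any CM-field of degree $g$ with maximal totally real subfield $L_0 = \mathbb{Q}(\lambda+\lambda^{-1})$ of degree $g/2$; wait — the statement wants the center of ${\rm End}_\mathbb{Q}(Y)$ to be a CM-field of degree $g$, so take $L_0$ of degree $g/2$ and a CM quadratic extension $K/L_0$, giving $[K:\mathbb{Q}] = g$. We now want a quaternion division algebra $B'$ with center $K$, equipped with a positive anti-involution of the second kind restricting to complex conjugation on $K$, and such that $B'$ contains an element $h$ whose reduced characteristic polynomial over $K$ has roots related to $\lambda$ — more precisely we want $\lambda$ (or $\lambda^{\pm 1}$ paired appropriately) to be an analytic eigenvalue. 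Again the existence of $B'$ as a division algebra reduces, via the local-global principle, to choosing an even set of finite places of $K$ at which to ramify, constrained so that the anti-involution of the second kind exists — here one uses the criterion (Shimura) that $B'$ must be split at every place of $K$ fixed by complex conjugation that lies over a real place, which is automatic since those don't exist, and the ramification set must be stable under ${\rm Gal}(K/L_0)$. So we pick a finite place $w$ of $K$ and its conjugate $\bar w$ (distinct, again arranged via $\check{\rm C}$ebotarev by choosing a rational prime inert in a suitable field so that $w \neq \bar w$), ramify exactly at $\{w, \bar w\}$. Then ${\rm End}_\mathbb{Q}(Y) = B'$ with $\dim Y = e_0 d^2 = (g/2) \cdot 4 = 2g$, matching the statement. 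The element $\lambda \in L_0 \subset K \subset B'$ is a unit, defining an automorphism $f$ of $Y$. The dynamical degree computation now uses that the $4g$ eigenvalues of $f$ on ${\rm H}^1(Y,\mathbb{C})$ are the roots of $S(x)$, each with multiplicity $4$ (since $[B':\mathbb{Q}] = 4g$ and $f$ acts through the rational representation as companion of $S^4$); the $k$-th dynamical degree is the largest product of $2k$ among $\{\lambda_i \text{ (mult. }4)\}$: for $k=1$ we can only take $\lambda \cdot \lambda = \lambda^2$ — no wait, multiplicity $4$ means $\lambda$ appears four times, so for $2 \le k \le 2g-2$ the top product is $\lambda^4$ (using $\lambda$ four times and $2k-4$ unit-modulus eigenvalues), for $k=1$ it is $\lambda^2$ ($\lambda$ twice), and by the reciprocal symmetry $k = 2g-1$ mirrors $k=1$ giving $\lambda^2$, and $k = 2g$ gives $1$. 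This matches the stated sequence $\lambda_1 = \lambda_{2g-1} = \lambda^2$, $\lambda_2 = \cdots = \lambda_{2g-2} = \lambda^4$, $\lambda_0 = \lambda_{2g} = 1$. The main obstacle throughout remains the careful $\check{\rm C}$ebotarev argument ensuring the ramification set can be chosen non-empty, of even size, and compatible with the required anti-involution — this is exactly the content packaged into Theorem \ref{thm_divisional_salem}.
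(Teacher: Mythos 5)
Your overall strategy coincides with the paper's: realize the Salem field inside a division quaternion algebra over the trace field $\mathbb{Q}(\lambda+\lambda^{-1})$ (resp.\ a CM extension of it) constructed via \v{C}ebotarev plus the local--global principle, feed it into the Birkenhake--Lange construction, and read off the dynamical degrees from the fixed-point formula, which forces the $2g$ (resp.\ $4g$) eigenvalues on ${\rm H}^1$ to be the roots of the Salem polynomial each with multiplicity $2$ (resp.\ $4$). Where you differ is in the mechanics of building the algebra: the paper writes down an explicit Hilbert symbol $B=\big(\tfrac{a,\,p}{F}\big)$ with $F(\sqrt a)=F(\gamma)$ the \emph{imaginary} quadratic extension generated by a unit-circle root $\gamma$, and uses \v{C}ebotarev (Frobenius $=$ complex conjugation) to find a prime $p$ at which the symbol ramifies, so the embedding of the Salem field is built into the presentation. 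You instead prescribe ${\rm Ram}(B)=\{v_1,v_2\}$ abstractly and invoke the embedding criterion for the \emph{real} quadratic extension $L=\mathbb{Q}(\lambda)$ of $L_0$; since $L\cong F(\gamma)$ as abstract fields this yields the same element of $B$, and your route is a standard, legitimate alternative (it handles the parity constraint explicitly, which the paper sidesteps by only needing ${\rm Ram}(B)\neq\emptyset$). One small slip: $L$ is \emph{not} totally real and the archimedean places of $L_0$ do not all split in $L$; this happens to be harmless only because ${\rm Ram}(B)$ contains no archimedean places.

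Part (ii), however, has a genuine gap. You write ``$\lambda\in L_0\subset K\subset B'$'', but $\lambda\notin L_0=\mathbb{Q}(\lambda+\lambda^{-1})$: it generates a quadratic extension of $L_0$. To place $\lambda$ (or a conjugate root) inside $B'$ you must embed the degree-$2g$ composite $LK$ as a maximal subfield of $B'$, and this imposes a further condition on your ramification set: each $w\in{\rm Ram}(B')$ must be non-split in $LK/K$, in addition to ${\rm Ram}(B')$ being stable under complex conjugation. This is achievable by the same \v{C}ebotarev-type selection (choose $w$ with $w\neq\bar w$ and $w$ inert in $LK/K$), but as written your construction does not guarantee that $B'$ contains any element with the required reduced characteristic polynomial, so the eigenvalue computation has nothing to apply to. The paper avoids the issue entirely by base change: it sets $B_2=B\otimes_F F(\sqrt{-d})$, so that $\gamma\in B\subset B_2$ automatically, and the only remaining point is to choose $d$ so that $F(\sqrt{-d})$ does not split $B$, keeping $B_2$ a division algebra.
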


We can modify the proof afterwards to get a generalization.

\begin{cor}\label{thm2} 
Let $\lambda$ be a Salem number of degree $h$. 
\begin{itemize}
\item[(i)] For all natural numbers $v$ there exists an automorphism $f$ on a $v\cdot h$-dimensional simple abelian variety $X$ with totally indefinite quaternion multiplication with dynamical degrees $$\lambda_k(f)=\lambda_{vh-k}(f)=\lambda^{2k}\quad \textit{ for } 0\leq k\leq v$$ $$\textit{ and }\quad \lambda_k(f)=\lambda^{2v} \quad \textit{ for } v<k<v(h-1).$$
\item[(ii)] For all natural numbers $v$ here exists an automorphism $f$ on a $2\cdot v\cdot h$-dimensional simple abelian variety $Y$ with second kind multiplication with dynamical degrees$$\lambda_k(f)=\lambda_{2vh-k}(f)=\lambda^{2k}\quad \textit{ for } 0\leq k\leq 2v$$ $$\textit{ and }\quad \lambda_k(f)=\lambda^{4v} \quad \textit{ for } 2v<k<2v(h-1).$$
The endomorphism algebra ${\rm End}_\mathbb{Q}(Y)$ is a quaternion algebra whose center is a CM-field of degree $v\cdot h$.
\end{itemize}
\end{cor}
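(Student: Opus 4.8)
The plan is to revisit the proof of Theorem~\ref{thm_salem_tot_indef}: the dynamical degrees are read off from the rational eigenvalues of the constructed automorphism, and these depend only on the chosen quaternion algebra, the chosen unit, and the $B$-module on which it acts. To obtain the staircase patterns above it suffices to vary the underlying $B$-module (for part~(i)) and to enlarge the center of the quaternion algebra (for part~(ii)), while keeping intact the arithmetic input of Theorem~\ref{thm_divisional_salem}.

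\emph{Part (i).} Keep the totally indefinite divisional quaternion algebra $B$ over $F=\mathbb{Q}(\lambda+\lambda^{-1})$ and the unit $y$ (with reduced characteristic polynomial $X^{2}-(\lambda+\lambda^{-1})X+1=(X-\lambda)(X-\lambda^{-1})$) furnished by the proof of Theorem~\ref{thm_salem_tot_indef}. Since $[F:\mathbb{Q}]=h/2$, the restriction of Proposition~\ref{class} allows the dimension $vh$; I would realize such a simple abelian variety by endowing $B^{\oplus v}$ with the structure of $H_{1}(X,\mathbb{Q})$, i.e.\ choosing on $B^{\oplus v}\otimes_{\mathbb{Q}}\mathbb{R}$ a complex structure commuting with the diagonal action of $B$ and a compatible polarization by the recipe of \cite[Chapter~9]{birkenhake_lange}, and take $f$ to be scalar multiplication by $y$ on each summand. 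Then the rational representation $\rho_{r}(f)$ is $v$ copies of the one occurring for $v=1$, so its $2vh$ eigenvalues are $\lambda$ and $\lambda^{-1}$, together with the $h-2$ conjugate roots of the Salem polynomial of $\lambda$ of modulus one, each with multiplicity $2v$. The $k$-th dynamical degree is the largest modulus of a product of $2k$ of these eigenvalues with pairwise distinct indices: choosing $2k\leq 2v$ copies of $\lambda$ gives $\lambda_{k}(f)=\lambda^{2k}$ for $0\leq k\leq v$, while for $v<k\leq v(h-1)$ one takes all $2v$ copies of $\lambda$ and pads with modulus-one eigenvalues to obtain $\lambda_{k}(f)=\lambda^{2v}$. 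The relation $\lambda_{k}(f)=\lambda_{vh-k}(f)$ follows from Poincaré duality, since $f^{-1}=y^{-1}$ has the same multiset of rational eigenvalues as $f$.

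\emph{Part (ii).} First build a CM-field $K$ of degree $vh$ whose maximal totally real subfield $K_{0}$ contains $F=\mathbb{Q}(\lambda+\lambda^{-1})$ with $[K_{0}:F]=v$ (adjoin to $F$ a totally real element of degree $v$ — e.g.\ a suitable Pisot number — and then a purely imaginary square root). Next, re-run the local--global argument of Theorem~\ref{thm_divisional_salem}, now over $K$ and combined with the Chebotarev density theorem, to produce a divisional quaternion algebra $B$ over $K$ carrying a positive involution of the second kind restricting to complex conjugation on $K$ and containing a unit $y$ whose reduced characteristic polynomial over $K$ is $X^{2}-(\lambda+\lambda^{-1})X+1$. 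Attach to $B$ the simple abelian variety $Y$ of the minimal dimension $2vh$ and let $f$ be multiplication by $y$. The $4vh$ rational eigenvalues of $f$ split according to the $vh/2$ conjugate pairs of complex embeddings of $K$: the $v$ pairs lying over the distinguished real place of $F$ (where $\lambda+\lambda^{-1}$ is $>2$) each contribute the pair $\lambda,\lambda^{-1}$ with multiplicity $2$, giving $4v$ eigenvalues of modulus $\lambda$ and $4v$ of modulus $\lambda^{-1}$, while the remaining $vh/2-v$ pairs contribute eigenvalues of modulus one. As before, the largest modulus of a product of $2k$ eigenvalues with distinct indices is $\lambda^{2k}$ for $0\leq k\leq 2v$ and $\lambda^{4v}$ for $2v<k\leq 2v(h-1)$, and Poincaré duality yields the palindromic symmetry $\lambda_{k}(f)=\lambda_{2vh-k}(f)$.

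\emph{Main obstacle.} The delicate point, in both parts, is to guarantee that the complex tori produced are genuinely \emph{simple} and that their endomorphism algebra is exactly $B$ rather than a larger algebra. This is already the crux of the minimal case treated in Theorem~\ref{thm_salem_tot_indef}, and I expect to dispose of it here the same way: the divisibility of $B$ (equivalently $\mathrm{Ram}(B)\neq\emptyset$), which survives the passage to the larger module in part~(i) and to the larger center in part~(ii), together with a sufficiently generic choice of the complex structure in the relevant parameter space, rules out both proper subtori and extra endomorphisms. A routine secondary verification is that the second-kind Rosati involution on the new algebra $B$ in part~(ii) is compatible with a choice of polarization, which proceeds exactly as in Theorem~\ref{thm_salem_tot_indef}.
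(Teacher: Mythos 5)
Your part (ii) is essentially the paper's own argument: enlarge the totally real field $\mathbb{Q}(\lambda+\lambda^{-1})$ to a totally real field of degree $vh/2$, pass to a CM quadratic extension of degree $vh$, rerun Theorem \ref{thm_divisional_salem} over the enlarged field, and read off the eigenvalues from the fixed-point formula; your count ($4v$ copies each of $\lambda$ and $\lambda^{-1}$, all remaining rational eigenvalues unimodular) and the resulting staircase of degrees agree with the paper's.

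Part (i), however, diverges from the paper in a way that opens a genuine gap. You keep the quaternion algebra $B$ over the degree-$h/2$ field $F=\mathbb{Q}(\lambda+\lambda^{-1})$ and enlarge the module to $B^{\oplus v}$. But the only existence statement available, Proposition \ref{prop_construction_abelian_variety_quat_algebra} (Birkenhake--Lange, Ch.~9.4), produces a simple abelian variety only in the minimal dimension $2e=h$. For $v>1$, the claim that a sufficiently generic complex structure on $B^{\oplus v}\otimes_{\mathbb{Q}}\mathbb{R}$ yields a \emph{simple} polarized torus with $\mathrm{End}_{\mathbb{Q}}=B$ (rather than, say, one isogenous to $X_0^{v}$ with endomorphism algebra $\mathrm{M}_v(B)$) is exactly Shimura's genericity theorem for the non-minimal members of these families; you neither prove nor cite it, and "the same way as the minimal case" does not apply because the cited proposition simply does not cover this case. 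The paper sidesteps the issue by doing for part (i) precisely what you do for part (ii): it enlarges the \emph{center} to a totally real field of degree $vh/2$ (so $K=F(\gamma)$ remains a quadratic extension and Theorem \ref{thm_divisional_salem} applies verbatim), whence the minimal-dimension proposition already yields a simple abelian variety of dimension $2e=vh$, and the fixed-point formula gives each root of the Salem polynomial multiplicity $2v$. Your eigenvalue bookkeeping and the deduction of the dynamical degrees are correct in both parts; only the construction of the simple variety in part (i) must be replaced by the center-enlargement (or supplemented with a reference to Shimura).
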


The difficulty of these results comes from the fact that we have to construct a suitable endomorphism on a simple abelian variety.  Our construction is made in the following way.
\\
\\
{\bf{Construction manual:}}
\begin{enumerate}
\item We choose a Salem number $\lambda$ with minimal polynomial $P$ of degree $g$.
\item Let $\gamma$ be a complex root of the polynomial $P$. For this algebraic integer of modulus $1$ we get an imaginary quadratic extension $K=F(\gamma)$ of a totally real number field $F=\mathbb{Q}(\gamma+\overline{\gamma})$.
\item Denote by $\mathcal{O}_F$ the ring of integers of $F$. We choose an $a\in\mathcal{O}_F$ such that $K=F(\sqrt{a})$.
\item We prove the existence of a prime number $p$ such that $B=\Big(\frac{a,\, p}{F}\Big)$ becomes a divisional quaternion algebra. (The construction is explained in detail in part \ref{construct_DA}.)
\item We install a positive anti-involution $'$ on $B$ and construct an order $\mathcal{O}$ in $B$ with $\gamma\in\mathcal{O}$.
\item We prove the existence of an integer $d\in\mathbb{Z}_{>0}$ such that the quaternion algebra $B_2=\Big(\frac{a, \, p}{F(\sqrt{-d})}\Big)$ stays divisional. We show that the positive anit-involution $'$ can be extended to $B_2$ and acts as complex conjugation on $F(\sqrt{-d})$. We also show the existence of an order $\mathcal{O}_2$ in $B_2$ that contains $\gamma$.
\item The following two propositions provide a simple $g$-dimensional abelian variety $X$ and a simple $2g$-dimensional abelian variety $Y$ such that the endomorphism rings ${\rm End}(X)$ and ${\rm End}(Y)$ contain $\mathcal{O}$ resp. $\mathcal{O}_2$.

\begin{prop}{\rm(\cite[Chapter 9.4]{birkenhake_lange})}\label{prop_construction_abelian_variety_quat_algebra} Let $B$ be a totally indefinite quaternion algebra over a totally real number field $F$ with $[F:\mathbb{Q}]=e$ and $'$ a positive anti-involution on $B$. Fix an order $\mathcal{O}$ in $B$ and suppose that $B$ is divisional. 
Then there exists a $2e$ dimensional simple abelian variety $X$ whose endomorphism ring ${\rm End}(X)$ contains $\mathcal{O}$.
\end{prop}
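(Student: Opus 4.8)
This is the classical existence theorem for abelian varieties with totally indefinite quaternion multiplication (Birkenhake--Lange, Chapter~9), so the plan is to reconstruct that argument: build $X$ by hand as a complex torus carrying a left $\mathcal{O}$-action, polarize it with the positive anti-involution $'$, and then deduce simplicity from the fact that $B$ is a skew field.

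I would first set $V := B \otimes_{\mathbb{Q}} \mathbb{R}$, a real vector space of dimension $4e$ on which $B$ acts by left multiplication, and take $\Lambda := \mathcal{O} \subset V$. Because $B$ is totally indefinite, $V \cong {\rm M}_2(\mathbb{R})^e$, and the commutant of the left $B$-action is right multiplication by $V$; a complex structure $J$ commuting with left multiplication is therefore the same as an element $j_0 \in V$ with $j_0^2 = -1$, i.e.\ on each ${\rm M}_2(\mathbb{R})$-factor a matrix squaring to $-I$, and these form a positive-dimensional family. For any such $J$, the pair $X_J := (V,J)/\Lambda$ is a complex torus of dimension $\tfrac12 \dim_{\mathbb{R}} V = 2e$, and since left and right multiplication commute, left multiplication by $\mathcal{O}$ (which preserves $\Lambda$ since $\mathcal{O}$ is a ring) embeds $\mathcal{O} \hookrightarrow {\rm End}(X_J)$.

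Next I would produce the polarization from $'$. Using positivity of $'$ one can choose an invertible $'$-skew element $t \in B$ (so $t' = -t$) and set $E(x,y) := {\rm Tr}_{B/\mathbb{Q}}(t\, x'\, y)$ for $x,y \in B$. Invariance of ${\rm Tr}_{B/\mathbb{Q}}$ under $'$ together with cyclicity of the trace gives $E(y,x) = -E(x,y)$, so $E$ is alternating; invertibility of $t$ makes it non-degenerate; rescaling $t$ makes it $\mathbb{Z}$-valued on $\mathcal{O}\times\mathcal{O}$; and $E(bx,y) = E(x,b'y)$ for $b \in B$ holds by construction, so the Rosati involution attached to $E$ restricts to $'$ on $\mathcal{O}$. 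The remaining and essential point is to pick $J$ inside its family compatibly with $t$ so that the first Riemann relation $E(Jx,Jy)=E(x,y)$ holds and, crucially, $E(x,Jx)>0$ for $x\neq 0$; this last positivity is exactly where the positivity of $(B,')$ (equivalently, of the trace form on $'$-symmetric elements) must be invoked. For this $J$ the Riemann bilinear relations hold, $X := X_J$ is a polarized abelian variety, and $\mathcal{O} \subseteq {\rm End}(X)$.

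Finally, the construction yields a whole analytic family of polarized $\mathcal{O}$-abelian varieties parametrized by the admissible complex structures; I would argue that away from a countable union of proper analytic subsets of this parameter space one has ${\rm End}_{\mathbb{Q}}(X) = B$, so that, $B$ being divisional, ${\rm End}_{\mathbb{Q}}(X)$ is a skew field containing no idempotent other than $0$ and $1$, whence $X$ is simple by Poincar\'e's complete reducibility theorem; choosing $J$ very general then finishes the proof. \textbf{The step I expect to be the main obstacle} is the polarization: matching the complex structure $J$ to the skew element $t$ so that both Riemann relations hold simultaneously, which is precisely where Albert's positivity condition on $(B,')$ and the concrete structure of totally indefinite quaternion algebras over totally real fields enter. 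A secondary technical point is making the genericity statement ${\rm End}_{\mathbb{Q}}(X)=B$ precise, i.e.\ a countability-of-Hodge-loci argument on the space of complex structures.
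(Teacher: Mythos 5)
The paper offers no proof of this proposition---it is quoted verbatim from Birkenhake--Lange, Chapter 9.4---and your outline reconstructs exactly the construction given there: realize $H_1(X,\mathbb{Q})$ as the left regular $B$-module so that $V=B\otimes_{\mathbb{Q}}\mathbb{R}\cong {\rm M}_2(\mathbb{R})^e$ with lattice $\mathcal{O}$, pick a complex structure in the commutant of left multiplication, polarize via $E(x,y)={\rm Tr}_{B/\mathbb{Q}}(t\,x'\,y)$ with $t'=-t$, and deduce simplicity from the genericity statement ${\rm End}_{\mathbb{Q}}(X)=B$ together with the absence of nontrivial idempotents in a division algebra. The two steps you flag as the remaining obstacles (choosing $J$ compatibly with $t$ so that both Riemann relations hold, and the countability-of-special-loci argument for genericity) are precisely the computations carried out in the cited chapter, so your sketch is correct and follows the same route as the source the paper relies on.
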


\begin{prop}{\rm (\cite[Chapter 9.6]{birkenhake_lange})}\label{construction_AV_CM_Quat} Let $B'$ be a divisional quaternion algebra over a CM-field $F(\sqrt{-d})$ with $[F(\sqrt{-d}):\mathbb{Q}]=2e$ and $'$ a positive anti-involution on $B_2$. Fix an order $\mathcal{O}_2$ in $B_2$ and suppose that $B_2$ is divisional. Then there exists a $2e$ dimensional simple abelian variety $Y$ whose endomorphism ring ${\rm End}(Y)$ contains $\mathcal{O}_2$.
\end{prop}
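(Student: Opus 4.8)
This proposition realizes a special case of the classical construction of abelian varieties with prescribed endomorphism structure, and the plan is to carry out the variant adapted to a quaternion algebra over a CM-field, in the spirit of \cite[Chapter 9]{birkenhake_lange}. Write $K:=F(\sqrt{-d})$ for the CM-centre of $B_2$ and regard $V:=B_2$ as a left module over itself, so that $\dim_{\mathbb Q}V=\dim_{\mathbb Q}B_2$. Since $K$ is totally imaginary we have $K\otimes_{\mathbb Q}{\mathbb R}\simeq{\mathbb C}^{e}$, and since a quaternion algebra over an algebraically closed field is split, this gives an isomorphism of real algebras $B_2\otimes_{\mathbb Q}{\mathbb R}\simeq M_2({\mathbb C})^{e}$ under which $'\otimes 1$ acts on each factor as a conjugate-linear anti-involution; positivity of $'$ allows me to normalise it, after conjugating each factor, to be the conjugate-transpose $X\mapsto X^{*}$. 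The plan is then to equip $V\otimes_{\mathbb Q}{\mathbb R}$ with a complex structure $J$ commuting with left multiplication by $B_2$ together with a compatible Riemann form $E$; the quotient $Y:=(V\otimes_{\mathbb Q}{\mathbb R})/\mathcal O_2$ with this data will be a polarized abelian variety of dimension $\tfrac12\dim_{\mathbb Q}B_2$ on which $\mathcal O_2$ acts by left multiplication, and a genericity argument will force $Y$ to be simple.

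For the complex structure and the form I would argue as follows. An endomorphism of $V=B_2$ commuting with every left multiplication is a right multiplication, so $J$ is right multiplication by some $\iota\in B_2\otimes_{\mathbb Q}{\mathbb R}$ with $\iota^{2}=-1$, and I take $E(x,y):=\mathrm{Tr}_{B_2/{\mathbb Q}}(x\,\beta\,y')$ for an element $\beta\in B_2$. The identity $\mathrm{Tr}_{B_2/{\mathbb Q}}(z')=\mathrm{Tr}_{B_2/{\mathbb Q}}(z)$ together with nondegeneracy of the trace pairing forces $\beta'=-\beta$ in order that $E$ be alternating, and the adjunction $E(bx,y)=E(x,b'y)$ then holds automatically by cyclicity of the trace, so that the Rosati involution attached to $E$ restricts to $'$ on $B_2$. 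It remains to choose $\beta$ and $\iota$ realizing $E(\iota x,\iota y)=E(x,y)$, $E(x,\iota x)>0$ for $x\neq 0$, and $E(\mathcal O_2,\mathcal O_2)\subseteq{\mathbb Z}$. I would first pick a rational skew element $\beta$ of nonzero reduced norm — the skew elements form a nonzero ${\mathbb Q}$-subspace of $B_2$, for instance containing $\sqrt{-d}$, and those of nonzero norm are Zariski-dense — chosen moreover so that the Hermitian form it induces at each archimedean place of $K$ has a prescribed signature $(p_i,q_i)$, $p_i+q_i=2$, with $(p_i,q_i)=(1,1)$ at least at one place; all such signatures are attained because $B_2$ is split at every archimedean place. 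For this $\beta$, the conditions $\iota^{2}=-1$ and $E(\iota x,\iota y)=E(x,y)$ make $\iota$ a point of the symmetric space of the unitary group attached to $\beta$ over $B_2\otimes_{\mathbb Q}{\mathbb R}$, and the positivity $E(x,\iota x)>0$ cuts out the nonempty Hermitian symmetric subdomain $\mathcal H\simeq\prod_{i}U(p_i,q_i)/\bigl(U(p_i)\times U(q_i)\bigr)$, which is positive-dimensional by the choice of signatures; finally, rescaling $\beta$ by a positive rational integer makes $E$ integral on $\mathcal O_2$.

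Given $(\iota,\beta)$ as above, the Riemann relations hold by construction, so $Y=(B_2\otimes_{\mathbb Q}{\mathbb R})/\mathcal O_2$ with complex structure $J$ and polarization $E$ is an abelian variety; since $J$ commutes with left multiplication by $B_2$ and $\mathcal O_2$ is a lattice stable under it, left multiplication gives an embedding $\mathcal O_2\hookrightarrow\mathrm{End}(Y)$. For simplicity I would run a Baire-category (Noether--Lefschetz) argument over the positive-dimensional parameter space $\mathcal H$: the locus of $\iota\in\mathcal H$ for which $\mathrm{End}_{\mathbb Q}(Y)$ strictly contains $B_2$ is a countable union of proper closed analytic subsets — each additional rational endomorphism imposes the closed, and for very general $\iota$ proper, condition of commuting with $J$ — hence has empty interior, so for $\iota$ outside it one has $\mathrm{End}_{\mathbb Q}(Y)=B_2$. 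Since $B_2$ is a division algebra whereas the endomorphism algebra of a nontrivial product of abelian varieties contains zero divisors, such a $Y$ is simple.

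I expect the main difficulty to be the arithmetic input of the second paragraph: exhibiting one rational skew element $\beta$ that is simultaneously of nonzero reduced norm and of the prescribed local signature type at every archimedean place, so that $E(x,\iota x)>0$ can be met at all places at once and the domain $\mathcal H$ comes out positive-dimensional, which the simplicity step needs. That all signatures $(p_i,q_i)$ with $p_i+q_i=2$ are available is a consequence of $B_2$ splitting at every archimedean place, which holds because $K$ is a CM-field; the reduction of the real involution to $X\mapsto X^{*}$ uses positivity of $'$. Once this local picture is secured the constraints decouple over the archimedean places and are satisfiable, and the remainder is the routine passage from compatible Riemann forms to polarized abelian varieties.
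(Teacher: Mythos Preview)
Your sketch is essentially the classical construction that the paper simply cites from \cite[Chapter~9.6]{birkenhake_lange}; the paper itself does not prove this proposition but imports it as a black box, so there is no independent argument to compare against, and your outline of the Riemann form $E(x,y)=\mathrm{Tr}_{B_2/\mathbb Q}(x\beta y')$, the choice of complex structure as right multiplication by an $\iota$ with $\iota^2=-1$, and the Baire-category step for simplicity all follow the lines of that reference.

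One point worth flagging: your construction yields an abelian variety of complex dimension $\tfrac12\dim_{\mathbb Q}B_2=\tfrac12\cdot 4\cdot 2e=4e$, not the $2e$ written in the proposition. This is consistent with how the proposition is actually used in the proof of Theorem~B(ii), where $[F(\sqrt{-d}):\mathbb Q]=g$ produces a $2g$-dimensional variety, and with the restriction $e_0d^2\mid g$ from Albert's classification; the $2e$ in the stated proposition thus appears to be a typo in the paper rather than an error on your part.
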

\item We finally get automorphisms $f$ and $f_2$ in $\mathcal{O}$ resp. $\mathcal{O}_2$ whose rational eigenvalues are $2$-times resp. $4$-times the roots of the Salem polynomial $P$ which leads to the claimed dynamical degrees.
\end{enumerate}



In the next section, we prove the statement of part 4 of our construction. After this crucial part we are able to prove Theorem \ref{thm_salem_tot_indef} in a next section.

\subsection{Construction of a divisional quaternion algebra}\label{construct_DA}

The main result of this section is the construction of a certain divisional quaternion algebra.


\begin{thm}\label{thm_divisional_salem} Let $F$ be a totally real number field and $K=F(\sqrt{a})$ a quadratic extension for $a\in\mathcal{O}_F$. Then there exists a prime number $p$ such that the quaternion algebra $B=\big(\frac{a,\, p}{F}\big)$ is divisional.
\end{thm}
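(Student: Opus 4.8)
The plan is to produce a single non-archimedean place $v$ of $F$ at which the quaternion algebra $B=\big(\frac{a,\,p}{F}\big)$ ramifies; by the local--global principle for quaternion algebras (\cite[Corollary 14.6.5]{voight}) quoted in the excerpt, the existence of one ramified place forces $\mathrm{Ram}(B)\neq\emptyset$, hence $B$ is divisional. So everything reduces to a local computation: I want to choose the prime $p$ so that over the completion $F_v$ for some prime $v\mid p$, the Hilbert symbol $(a,p)_v$ is $-1$, i.e. $x^2 = a y^2 + p z^2$ has no nontrivial $F_v$-solution.

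First I would separate two cases according to whether $a$ is a square in $F$ or not; since $K=F(\sqrt a)$ is a genuine quadratic extension we are in the non-square case, so $F(\sqrt a)/F$ is a degree-$2$ field extension. The idea is then to pick a rational prime $p$ that stays inert (or at least is unramified with a suitable splitting behaviour) in $F(\sqrt a)$ relative to a chosen prime $v$ of $F$ above $p$: concretely, I want a prime $v$ of $F$, lying over a rational prime $p$, such that $p$ is unramified in $F$ and in $F(\sqrt a)/F$, the residue characteristic is odd, $v$ does not divide $a$ (so $a$ is a $v$-adic unit), and $a$ is a non-square in the residue field $\kappa(v)$ — equivalently $v$ is inert in $K/F$. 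For such a $v$, a standard valuation-theoretic computation of the Hilbert symbol over a non-dyadic local field gives $(a,p)_v = (a,\pi_v)_v = \left(\tfrac{a}{v}\right) = -1$ because $\mathrm{ord}_v(p)=1$ and $a$ is a unit that is not a square mod $v$. Hence $B_v$ is a division algebra and $B$ is divisional.

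The one remaining point is the existence of infinitely many such primes $v$, and this is exactly where \v{C}ebotarev's density theorem enters (as flagged in the introduction of the paper). Apply \v{C}ebotarev to the Galois closure $L$ of $F(\sqrt a)/\mathbb{Q}$, or more simply to $F(\sqrt a)/F$ together with $F/\mathbb{Q}$: the set of primes $v$ of $F$ that are inert in $F(\sqrt a)/F$ corresponds to a nontrivial union of conjugacy classes (the elements of $\mathrm{Gal}(F(\sqrt a)/F)\cong\mathbb{Z}/2$ acting nontrivially, pulled back appropriately), and this set has positive density, hence is infinite; discarding the finitely many $v$ above $2$ or dividing $a$ or ramified in the relevant extensions still leaves infinitely many. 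Pick any one of them and let $p$ be the rational prime below it. I expect the main obstacle to be purely bookkeeping: making the \v{C}ebotarev application clean (choosing the right field extension and the right Frobenius condition so that "inert in $K/F$" is correctly a Frobenius-class condition, given that $F$ itself need not be Galois over $\mathbb{Q}$), and checking that the local Hilbert-symbol formula $(u,\pi)_v=\left(\tfrac{u}{v}\right)$ for a unit $u$ and uniformizer $\pi$ is being invoked only at odd residue characteristic. Once the prime is in hand, the conclusion is immediate from the local--global equivalence already recorded in the text.
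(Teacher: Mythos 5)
Your proposal is correct and follows essentially the same route as the paper: both arguments use \v{C}ebotarev's density theorem applied to the quadratic extension $K/F$ to produce a prime $\mathfrak{p}$ of $F$, inert in $K$, lying over an odd rational prime $p$ that is unramified in $F$ and coprime to ${\rm N}_{F/\mathbb{Q}}(a)$, so that $p$ is a uniformizer at $v=v_{\mathfrak{p}}$ and $a$ is a non-square unit in the residue field, whence the local algebra is division and the local--global principle applies. The only cosmetic difference is that you phrase the local criterion via the Hilbert symbol $(a,p)_v=-1$, while the paper invokes the equivalent statement \cite[Corollary 12.3.9]{voight} directly.
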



To prove this theorem we will use the local-global principle for quaternion algebras which tells us that a quaternion algebra $B$ over a global field $F$ is divisional if and only if the set of ramified places ${\rm Ram}(B)$ of $B$ is not empty. 

Starting with the totally real number field $F$ and the algebraic integer $a\in\mathcal{O}_F$, we will look for a prime number $p$ and a non-archimedean valuation $v_F$ extending the $p$-adic valuation such that $B=\big(\frac{a,\, p}{F}\big)$ ramifies at $v_F$. We will construct the valuation $v_F$ to meet the assumptions of the following proposition.


\begin{prop}\label{quat_div}{\rm(\cite[Corollary 12.3.9]{voight})} Let $F_{v_F}$ be a non-archimedean local field, $p$ a uniformizer for $v_F$ and $k_{v_F}$ the residue field with ${\rm char}\neq 2$. Then, a quaternion algebra $B_0$ is divisional if and only if $B_0\simeq \big(\frac{a,\, p}{F_{v_F}}\big)$, where $v_F(a)=0$ and $a$ is nontrivial in $k^\times_{v_F}/(k^{\times}_{v_F})^2$.
\end{prop}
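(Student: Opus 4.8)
Throughout write $v=v_F$, and let $\mathcal O$, $\mathfrak p$, $k=k_{v_F}$ be the valuation ring, maximal ideal and residue field of $F_v:=F_{v_F}$. The plan is to reduce the statement to the Hilbert symbol and to an explicit description of the norm group of the unramified quadratic extension of $F_v$. Recall the standard fact that, for $a,b\in F_v^\times$, the quaternion algebra $\big(\frac{a,b}{F_v}\big)$ is split (i.e.\ isomorphic to $\mathrm M_2(F_v)$) if and only if either $a\in(F_v^\times)^2$ or $b\in N_{F_v(\sqrt a)/F_v}\big(F_v(\sqrt a)^\times\big)$; moreover, when $a\notin(F_v^\times)^2$, one has $\big(\frac{a,b}{F_v}\big)\cong\big(\frac{a,b'}{F_v}\big)$ whenever $b/b'\in N_{F_v(\sqrt a)/F_v}\big(F_v(\sqrt a)^\times\big)$. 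Since a quaternion algebra over a field is either split or division, $B_0=\big(\frac{a,b}{F_v}\big)$ is divisional precisely when $a\notin(F_v^\times)^2$ and $b$ is not a norm from $F_v(\sqrt a)$.

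First I would handle the case $v(a)=0$ with $\bar a$ a nonsquare in $k^\times$. Because $\mathrm{char}\,k\neq 2$, Hensel's lemma shows $a\notin(F_v^\times)^2$ and that $L:=F_v(\sqrt a)$ is the unramified quadratic extension of $F_v$, with residue field $k(\sqrt{\bar a})$. For an unramified extension the norm is surjective on unit groups — the residue norm $k(\sqrt{\bar a})^\times\to k^\times$ is a surjection of cyclic groups, and one lifts by completeness — while a uniformizer $\pi$ of $F_v$ remains a uniformizer of $L$ with $N_{L/F_v}(\pi)=\pi^2$. Hence $N_{L/F_v}(L^\times)=\{x\in F_v^\times: v(x)\in 2\mathbb Z\}$. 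As $v(p)=1$ is odd, $p\notin N_{L/F_v}(L^\times)$, so $\big(\frac{a,p}{F_v}\big)$ is not split, i.e.\ it is a division algebra. This proves the ``if'' direction.

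For the converse, let $B_0$ be a quaternion division algebra over $F_v$. The key input is that the unramified quadratic extension $L/F_v$ splits $B_0$; equivalently $L$ embeds in $B_0$ as a maximal subfield. This can be obtained from local class field theory (under the invariant isomorphism $\mathrm{Br}(F_v)\cong\mathbb Q/\mathbb Z$ every Brauer class is represented by a cyclic algebra built from the unramified extensions), or from the valuation theory of central division algebras over a complete discretely valued field, where one checks that the residue algebra of $B_0$ is the quadratic extension of $k$ and lifts it to an unramified maximal subfield. Write $L=F_v(\sqrt a)$ with $a\in\mathcal O^\times$ a nonsquare mod $\mathfrak p$, and $B_0=L\oplus Lj$ with $j^2=b\in F_v^\times$, $jx=\bar x j$; then $B_0\cong\big(\frac{a,b}{F_v}\big)$. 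Since $B_0$ is not split, $b\notin N_{L/F_v}(L^\times)$, so by the computation above $v(b)$ is odd. Writing $b=p^{2m+1}u$ with $u\in\mathcal O^\times$ and using $\mathcal O^\times\subseteq N_{L/F_v}(L^\times)$ together with $p^{2m}\in N_{L/F_v}(L^\times)$, we get $b/p\in N_{L/F_v}(L^\times)$, whence $B_0\cong\big(\frac{a,p}{F_v}\big)$ with $a$ of the required form.

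The only genuinely non-elementary point — and the one I expect to be the main obstacle — is that a quaternion division algebra over a non-archimedean local field contains the unramified quadratic extension; the rest is Hensel's lemma plus the explicit description of the norm group of an unramified quadratic extension.
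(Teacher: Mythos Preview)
The paper does not supply its own proof of this proposition: it is quoted verbatim as \cite[Corollary 12.3.9]{voight} and then applied as a black box in the proof of Theorem~\ref{thm_divisional_salem}. So there is nothing in the paper to compare your argument against.

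That said, your argument is correct and is essentially the standard one (and indeed the one underlying Voight's treatment). The ``if'' direction via Hensel and the explicit norm group $N_{L/F_v}(L^\times)=\{x:v(x)\in 2\mathbb Z\}$ for the unramified quadratic extension is exactly right. For the converse you correctly isolate the one nontrivial input: that the unramified quadratic extension embeds in any quaternion division algebra over a non-archimedean local field (equivalently, that it splits such an algebra). Once that is granted, your reduction of $b$ to $p$ modulo norms is clean. Your self-assessment of where the real content lies is accurate.
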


The following two statements provide us the right amount of appropriate prime ideals $\mathfrak{p}$ in $F$ and $\mathfrak{P}$ in $K$ to construct the required valuation $v_F$.

Let $F\subseteq K$ be a Galois extension with Galois group $G:={\rm Gal}(K/F)$. For every $\sigma\in G$, we consider the set $M:={\rm P}_{K/F}(\sigma)$ of prime ideals $\mathfrak{p}$ of $F$, unramified in $K$ such that there exists a prime ideal $\mathfrak{P}$ of $K$ above $\mathfrak{p}$ whose Frobenius automorphism $\Big(\frac{K/F}{\mathfrak{P}}\Big)$ over $F$ coincides with $\sigma$. The \textit{Dirichlet density} of $M$, provided it exists, is defined as $${\rm d}(M)=\lim_{s\to 1^+} \frac{\sum_{\mathfrak{p}\in M}\mathfrak{N}(\mathfrak{p})^{-s}}{\sum_\mathfrak{p}\mathfrak{N}(\mathfrak{p})^{-s}},$$ where $\mathfrak{N}(\mathfrak{p})$ stands for the index $[\mathcal{O}_F:\mathfrak{p}]$.

\begin{thm}{\rm($\check{\textnormal{C}}$ebotarev's density theorem \cite[Theorem 13.4]{neukirch})}\label{cebotarev} Let $K/F$ be a Galois extension with group $G$. Then for every $\sigma\in G$, the set ${\rm P}_{K/F}(\sigma)$ has  a density, denoted ${\rm d}({\rm P}_{K/F}(\sigma))$, and it is given by the formula $${\rm d}({\rm P}_{K/F}(\sigma))=\frac{\#\langle\sigma\rangle}{\# G}.$$ 
\end{thm}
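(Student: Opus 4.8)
The statement to be proved is Čebotarev's density theorem (Theorem \ref{cebotarev}), which in this paper is quoted from Neukirch. Since the excerpt explicitly attributes this to \cite[Theorem 13.4]{neukirch}, the natural ``proof'' here is to recall the standard analytic argument, and I would organize it as follows. The plan is to first reduce to the cyclic case by a base-change trick, then in the cyclic case to invoke the analytic theory of $L$-functions of Hecke (or Artin) characters, and finally to reassemble the general statement.

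First I would record the reduction to cyclic subgroups. Fix $\sigma \in G$ and let $H = \langle\sigma\rangle$ be the cyclic subgroup it generates, with fixed field $E = K^{H}$. A prime $\mathfrak{p}$ of $F$ unramified in $K$ has Frobenius conjugacy class meeting $\{\sigma\}$ (for a suitable prime $\mathfrak{P}\mid\mathfrak{p}$) essentially when $\mathfrak{p}$ has, below some prime of $E$, a prime $\mathfrak{q}$ with $f(\mathfrak{q}/\mathfrak{p})=1$ whose Frobenius in $\mathrm{Gal}(K/E)=H$ equals $\sigma$; the count of such $\mathfrak{p}$ differs from $\#\mathrm{P}_{K/E}(\sigma)$ only by primes of $E$ of residue degree $>1$ over $F$, which form a set of density zero. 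This lets me replace $(K/F,\sigma)$ by the cyclic extension $(K/E,\sigma)$ at the cost of controlling a density-zero discrepancy, reducing everything to: if $K/E$ is cyclic of degree $n=\#H$ with generator $\sigma$, then the set of primes of $E$ unramified in $K$ with Frobenius $\sigma$ has Dirichlet density $1/n$.

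Next I would treat the cyclic case analytically. With $G'=\mathrm{Gal}(K/E)$ cyclic of order $n$, class field theory identifies $G'$ with a quotient of a ray class group of $E$, so each character $\chi$ of $G'$ corresponds to a Hecke character of $E$ with an associated Hecke $L$-function $L(s,\chi)$. The key analytic facts are: $L(s,\chi)$ extends meromorphically to a neighborhood of $s=1$, is holomorphic and nonzero there for $\chi$ nontrivial (nonvanishing on the line $\mathrm{Re}(s)=1$), while for $\chi$ trivial $L(s,\chi)=\zeta_E(s)$ has a simple pole at $s=1$. Taking logarithmic derivatives (or $\log L(s,\chi)$) and using that for unramified $\mathfrak{q}$ the local factor involves $\chi$ of the Frobenius, one gets, as $s\to 1^{+}$,
\[
\sum_{\mathfrak{q}\ \mathrm{unram},\ \mathrm{Frob}=\sigma}\mathfrak{N}(\mathfrak{q})^{-s} \;=\; \frac{1}{n}\sum_{\chi}\overline{\chi(\sigma)}\,\log L(s,\chi) + O(1) \;=\; \frac{1}{n}\log\frac{1}{s-1} + O(1),
\]
where the sum over $\chi$ ranges over all $n$ characters of $G'$, and one uses orthogonality of characters of the cyclic group together with the fact that only $\chi$ trivial contributes the pole. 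Since $\sum_{\mathfrak{q}}\mathfrak{N}(\mathfrak{q})^{-s}\sim\log\frac{1}{s-1}$ as well, dividing gives density $1/n$. Prime powers and ramified primes contribute only bounded terms and so do not affect the density.

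Finally I would combine the two steps: density $1/\#H = \#\langle\sigma\rangle/\#G \cdot (\#G/\#H)^{-1}$ — more precisely, the set $\mathrm{P}_{K/E}(\sigma)$ has density $1/\#H$ among primes of $E$, and since each such prime of $E$ of residue degree $1$ over $F$ lies over a unique prime of $F$ in $\mathrm{P}_{K/F}(\sigma)$, while the fibers and the density-zero error sets are controlled, I would conclude $\mathrm{d}(\mathrm{P}_{K/F}(\sigma)) = \#\langle\sigma\rangle/\#G$. I expect the main obstacle to be the nonvanishing $L(1,\chi)\neq 0$ for nontrivial $\chi$ — in the Artin-character phrasing this is exactly where one needs Brauer induction or Hecke's theory to stay in the abelian setting — and, more bookkeeping than obstacle, the careful tracking of the density-zero sets of primes of higher residue degree in passing between $F$, $E$, and $K$. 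Given that the paper cites Neukirch for the full statement, in the actual write-up I would most likely just invoke the reference rather than reproduce this argument; the above is the proof it stands on.
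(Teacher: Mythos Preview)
The paper gives no proof of this theorem; it is quoted from Neukirch and used only to guarantee infinitely many primes with a prescribed Frobenius in Corollary~\ref{cor_chebotarev}. Your closing remark that you would simply cite the reference is exactly what the paper does, so at that level your proposal and the paper agree.

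Two points about the sketch itself are worth flagging. First, the density formula as printed, $\#\langle\sigma\rangle/\#G$, is not the standard \v{C}ebotarev density; the correct value is $\#C(\sigma)/\#G$ with $C(\sigma)$ the conjugacy class of $\sigma$. (Already in the paper's quadratic application this gives $1$ rather than the true density $1/2$; the paper only needs positivity, so the slip is harmless there, but you should not try to prove the formula exactly as stated.) Second, and relatedly, your reassembly step is where the genuine work lies, and it is not carried out: passing from density $1/\#H$ among primes of $E=K^{\langle\sigma\rangle}$ to a density among primes of $F$ requires counting, for each $\mathfrak{p}\in\mathrm{P}_{K/F}(\sigma)$, how many degree-one primes of $E$ above $\mathfrak{p}$ have Frobenius $\sigma$ in $K/E$; that count involves the centralizer $Z_G(\sigma)$ and yields $1/\#Z_G(\sigma)=\#C(\sigma)/\#G$. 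Your line ``$1/\#H=\#\langle\sigma\rangle/\#G\cdot(\#G/\#H)^{-1}$'' does not parse and does not perform this computation. The reduction to the cyclic case and the $L$-function argument in the cyclic case are the standard ingredients and are fine as outlined.
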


The following proposition provides us infinitely many prime ideals that are unramified in a separable field extension:

\begin{prop}\label{infinte_prime}{\rm(\cite[Poposition 8.4]{neukirch})} If $ K\vert F$ is a separable field extension, then there are only finitely many ideals of $F$ that are ramified in $K$.
\end{prop}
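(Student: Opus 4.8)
The plan is to exhibit a single nonzero element of $\mathcal{O}_F$ whose prime divisors contain every prime of $F$ that ramifies in $K$, and then to use that a nonzero element of a Dedekind domain lies in only finitely many prime ideals. (I state the argument for number fields, which is the case used in the sequel; it goes through verbatim for an arbitrary Dedekind domain $\mathcal{O}_F$ with fraction field $F$.)

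First I would fix notation. Let $\mathcal{O}_F\subseteq\mathcal{O}_K$ be the rings of integers, and recall that a prime $\mathfrak{p}$ of $\mathcal{O}_F$ ramifies in $K$ precisely when $\mathfrak{p}\mathcal{O}_K$ is not a product of pairwise distinct primes of $\mathcal{O}_K$. Since $K/F$ is separable, the primitive element theorem provides $\theta\in\mathcal{O}_K$ with $K=F(\theta)$; let $g\in\mathcal{O}_F[X]$ be its monic minimal polynomial, of degree $n=[K:F]$. Separability of $K/F$ enters exactly here: it forces the discriminant $\mathrm{disc}(g)\in\mathcal{O}_F$ to be nonzero.

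Second I would compare $\mathcal{O}_F[\theta]$ with $\mathcal{O}_K$. As $\mathcal{O}_F$ is Noetherian and $\mathcal{O}_K$ is a finite $\mathcal{O}_F$-module, $\mathcal{O}_K$ is in particular finitely generated over $\mathcal{O}_F[\theta]$; writing a finite generating set in the $F$-basis $1,\theta,\dots,\theta^{n-1}$ of $K$ and clearing denominators yields a nonzero $c\in\mathcal{O}_F$ with $c\,\mathcal{O}_K\subseteq\mathcal{O}_F[\theta]$. Hence for every prime $\mathfrak{p}\nmid c$ the element $c$ is a unit in the localization $\mathcal{O}_{F,\mathfrak{p}}$, and one obtains $\mathcal{O}_{F,\mathfrak{p}}[\theta]=\mathcal{O}_{K,\mathfrak{p}}$.

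Finally I would invoke the Dedekind--Kummer theorem. Put $N:=c\cdot\mathrm{disc}(g)\in\mathcal{O}_F\setminus\{0\}$ and let $\mathfrak{p}$ be any prime with $\mathfrak{p}\nmid N$. Then $\mathcal{O}_{K,\mathfrak{p}}=\mathcal{O}_{F,\mathfrak{p}}[\theta]$, so the factorization of $\mathfrak{p}\mathcal{O}_K$ reflects that of the reduction $\bar{g}\in\mathbb{F}_{\mathfrak{p}}[X]$ of $g$; and since $\mathfrak{p}\nmid\mathrm{disc}(g)$, the polynomial $\bar{g}$ is separable, hence a product of distinct irreducibles, so $\mathfrak{p}\mathcal{O}_K$ is a product of distinct primes and $\mathfrak{p}$ is unramified. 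Thus every prime ramifying in $K$ divides $N$; since $N\neq 0$ and $\mathcal{O}_F$ is a Dedekind domain, only finitely many primes divide $N$, which proves the claim. One may repackage this as the assertion that $\mathfrak{p}$ ramifies in $K$ iff $\mathfrak{p}$ divides the relative discriminant ideal $\mathfrak{d}_{K/F}$, which is a nonzero ideal by separability; the above is a hands-on proof of that equivalence. The only genuinely delicate point is the identity $\mathcal{O}_{F,\mathfrak{p}}[\theta]=\mathcal{O}_{K,\mathfrak{p}}$ for almost all $\mathfrak{p}$, since $\mathcal{O}_F$ need not be a principal ideal domain and there is no literal "index $[\mathcal{O}_K:\mathcal{O}_F[\theta]]$" to quote; localizing at each $\mathfrak{p}$ circumvents this.
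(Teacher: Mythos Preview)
Your argument is correct: the primitive element, the conductor $c$ with $c\,\mathcal{O}_K\subseteq\mathcal{O}_F[\theta]$, and Dedekind--Kummer together show that every ramified prime divides the nonzero element $N=c\cdot\mathrm{disc}(g)$, hence there are only finitely many. Note, however, that the paper does not supply its own proof of this proposition; it is quoted as a standard fact from Neukirch (Chapter~I, Proposition~8.4), where the statement is packaged exactly as you mention at the end, namely that a prime $\mathfrak{p}$ of $F$ ramifies in $K$ if and only if it divides the relative discriminant ideal $\mathfrak{d}_{K/F}$, which is nonzero by separability. Your write-up is thus a self-contained unwinding of the cited reference rather than a deviation from anything in the paper itself.
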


In our setting, $K$ is a degree $2$ extension of $F$. We now choose suitable prime ideals in the fields $F$ and $K$ above a prime number $p$ to construct a non-archimedean valuation $v_K$ on $K$. In the prove of Theorem \ref{thm_divisional_salem}, this $v_K$ will restrict to the desired valuation $v_F$ and deliver all required properties to apply Proposition \ref{quat_div}.
\begin{cor} \label{cor_chebotarev} Let $F$ be a totally real number field and $K= F(\sqrt{a})$ an imaginary quadratic extension for $a\in\mathcal{O}_F$. 
Then there exists a prime number $p$, a valuation $v_K$ on $K$, a prime ideal $\mathfrak{p}$ in $F$ containing $p\Z$ and a prime ideal $\mathfrak{P}$ in $K$ containing $\mathfrak{p}$ satisfying the following conditions:
\begin{enumerate}
\item[(i)] $p$ and ${\rm N}_{F/\mathbb{Q}}(a)$ are coprime. 
\item[(ii)] The ideal $\mathfrak{p} \subset F$ is unramified in $K$. 
\item[(iii)] The Frobenius automorphism $\Big(\frac{K/F}{\mathfrak{P}}\Big)$ of $\mathfrak{P}$ corresponds to the complex conjugation.
\item[(iv)] For all $\alpha \in \mathfrak{P} \setminus \mathfrak{P}^2,$ $v_K(\alpha) = 1$ and  the restriction of $v_K$ to $\Z$ is the $p$-adic valuation.
\end{enumerate}
\end{cor}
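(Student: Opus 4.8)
The plan is to deduce the statement from \v Cebotarev's density theorem (Theorem \ref{cebotarev}) applied to the quadratic Galois extension $K/F$. Its Galois group $G = {\rm Gal}(K/F) = \{{\rm id},\sigma\}$ has order two, and since $F$ is totally real while $K$ is totally imaginary, $K$ is a CM-field whose nontrivial automorphism $\sigma$ is exactly complex conjugation (independently of the embedding into $\mathbb{C}$). I would take $\sigma$ to be this element. By Theorem \ref{cebotarev} the set ${\rm P}_{K/F}(\sigma)$ has Dirichlet density $\#\langle\sigma\rangle/\#G = 2/2 = 1$, so it is infinite. By construction every $\mathfrak{p}\in{\rm P}_{K/F}(\sigma)$ is unramified in $K$ and carries a prime $\mathfrak{P}$ above it with Frobenius $\sigma$; since $\langle\sigma\rangle = G$, the decomposition group at such a $\mathfrak{P}$ is all of $G$, hence $\mathfrak{P}$ is the \emph{unique} prime of $K$ over $\mathfrak{p}$, with $e(\mathfrak{P}/\mathfrak{p}) = 1$ and $f(\mathfrak{P}/\mathfrak{p}) = 2$; in other words $\mathfrak{p}$ is inert in $K$.

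Next I would discard the finitely many ``bad'' primes. By Proposition \ref{infinte_prime} only finitely many rational primes ramify in $F$, and only finitely many rational primes divide the nonzero integer ${\rm N}_{F/\mathbb{Q}}(a)$ (the primes ramifying in $K$ are already excluded from ${\rm P}_{K/F}(\sigma)$ by definition). As ${\rm P}_{K/F}(\sigma)$ is infinite, one can pick $\mathfrak{p}\in{\rm P}_{K/F}(\sigma)$ lying over a rational prime $p$, i.e.\ with $\mathfrak{p}\cap\mathbb{Z} = p\mathbb{Z}$, such that $p$ neither ramifies in $F$ nor divides ${\rm N}_{F/\mathbb{Q}}(a)$ (and, if one also wants ${\rm char}\,k_{v_F}\neq 2$ for the later use of Proposition \ref{quat_div}, one excludes $p=2$ as well). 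This gives (i) and (ii) at once, while (iii) is the defining property of ${\rm P}_{K/F}(\sigma)$ combined with the identification of $\sigma$ with complex conjugation from the first paragraph. Let $\mathfrak{P}$ denote the unique prime of $K$ above this $\mathfrak{p}$.

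Finally I would define $v_K$ to be the $\mathfrak{P}$-adic valuation on $K$, normalized so that a uniformizer of $\mathfrak{P}$ has value $1$. Then $v_K(\alpha) = 1$ for every $\alpha\in\mathfrak{P}\setminus\mathfrak{P}^2$, essentially by definition of the $\mathfrak{P}$-adic valuation. For the restriction to $\mathbb{Z}$: since $p$ is unramified in $F$ we have $e(\mathfrak{p}/p) = 1$, and since $\mathfrak{p}$ is inert (in particular unramified) in $K$ we have $e(\mathfrak{P}/\mathfrak{p}) = 1$; multiplicativity of ramification indices gives $e(\mathfrak{P}/p) = 1$, hence $v_K(p) = 1$, while $v_K$ vanishes on $\mathbb{Z}\setminus p\mathbb{Z}$. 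Thus $v_K|_{\mathbb{Z}}$ is the $p$-adic valuation, which is (iv).

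I do not expect a genuine obstacle here: the entire content is the existence, guaranteed by \v Cebotarev, of an inert prime of $F$ in $K$ whose Frobenius is complex conjugation, and the rest is bookkeeping with ramification indices. The only point to be slightly careful about is that the finitely many exclusions required for (i), (ii) (and optionally $p\neq 2$) and for the unramifiedness of $p$ in $F$ in (iv) are all compatible with the density-one set provided by Theorem \ref{cebotarev} — which they obviously are.
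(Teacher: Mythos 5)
Your proposal is correct and follows essentially the same route as the paper: apply \v Cebotarev to the complex-conjugation element of ${\rm Gal}(K/F)$, discard the finitely many primes that ramify in $F$ or divide ${\rm N}_{F/\mathbb{Q}}(a)$, and take $v_K$ to be the $\mathfrak{P}$-adic valuation, with unramifiedness at both stages giving $e(\mathfrak{P}/p)=1$ and hence (iv). Your extra observation that $\mathfrak{p}$ is in fact inert (decomposition group all of $G$, so $f=2$) is a point the paper only establishes later, in Lemma \ref{lem_divisional_technical}, and the density of the relevant set is really $1/2$ rather than $1$ (the conjugacy class, not the cyclic subgroup, is what counts), but either way the set is infinite and the argument goes through.
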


Observe that the last condition implies that $v_K$ extends the $p$-adic valuation on $\Z$ since the ideal $\mathfrak{P}$ contains the ideal $p\Z$.

\begin{proof}
We start with $\sigma\in{\rm Gal}(K/F)$ that coincides with the complex conjugation on $K$. By Theorem \ref{cebotarev} and Proposition \ref{infinte_prime}, we get infinitely many pairs $(\mathfrak{p},\mathfrak{P})$ of prime ideals, such that $\mathfrak{p}$ is unramified in $K$, $\mathfrak{P}$ in $K$ lies above $\mathfrak{p}$ and the Frobenius automorphism $\Big(\frac{K/F}{\mathfrak{P}}\Big)$ of $\mathfrak{P}$ corresponds to $\sigma$. For all these pairs $(\mathfrak{p},\mathfrak{P})$ of prime ideals, the conditions (ii) and (iii) hold.

Next, we look at the factorization $\prod_{j=1}^kp_j^{d_j}$ of the norm ${\rm N}_{F/\mathbb{Q}}(a)$ in $\mathbb{Z}$, as $a$ lies in $\mathcal{O}_F$. For every prime $p_j$ we denote the unique decomposition $p_j\mathcal{O}_F=\prod_{l=1}^m\mathfrak{p}_l^{e_l}$ of prime ideals in $\mathcal{O}_F$. The number of all  prime ideals that occur in these decompositions is finite.

At least, we choose one of our infinitely many pairs $(\mathfrak{p},\mathfrak{P})$ such that $\mathfrak{p}$ is not a factor in the decomposition $p_j\mathcal{O}_F$ as $\prod_{l=1}^m\mathfrak{p}_l^{e_l}$ for all $j\in\{1,\ldots,k\}$. 
By construction, such a prime ideal $\mathfrak{p}$ lies above  a prime number $p$ which is coprime to ${\rm N}_{F/\mathbb{Q}}(a)$. 
This shows that we can find a prime number $p$ and a pair $(\mathfrak{p}, \mathfrak{P})$ of prime ideals satisfying the conditions $(i), (ii)$ and $ (iii)$. 
By Proposition \ref{infinte_prime}, there are only finitely many prime numbers $p$ such that the ideal $p\Z$ is ramified in $F$. In particular, we can choose $(p , \mathfrak{p}, \mathfrak{P})$ so that $p\Z$ is also unramified in $F$. 
\smallskip

Let us now construct a valuation $v_K$ satisfying condition $(iv)$. 
We thus take $v_K$ to be the unique discrete valuation such that 
$$v_K(\alpha) = 1 ,$$
for all $\alpha \in \mathfrak{P} \setminus \mathfrak{P}^2$.
We now check that the above valuation $v_K$ satisfies condition $(iv)$.
Since the ideal $\mathfrak{p} \subset F$ is unramified in $K$, it can be decomposed as:
\begin{equation*}
\mathfrak{p} \mathcal{O}_K = \prod_{j=0}^l I_j,
\end{equation*}
where $I_j$ are prime ideals of $K$. 
Since the ideal $\mathfrak{P}$ contains $\mathfrak{p}$, we can suppose that $I_0 = \mathfrak{P}$. 

Similarly, since we have constructed $p$ so that the ideal $p\Z$ is unramified in $F$, we have:
\begin{equation*}
(p )\mathcal{O}_F = \mathfrak{p} \prod_{j=0}^r J_j,
\end{equation*}
where $J_j$ are prime ideals in $F$.
We obtain a decomposition $(p)$ in $K$ as:
\begin{equation*}
(p) \mathcal{O}_K = \mathfrak{P} \prod_{j=1}^l I_j \prod_s \tilde{J}_{s},
\end{equation*}
where $\tilde{J}_s$ are prime ideals of $K$ containing one of the ideals $J_j$.
This shows that $v_K(p)=1$ and condition $(iv)$ holds.
\end{proof}

\begin{proof}[Proof of Theorem \ref{thm_divisional_salem}]
Consider the quadruple $(p, v_K , \mathfrak{p}, \mathfrak{P})$ constructed using Corollary \ref{cor_chebotarev}.
Denote by $k_F:=(\mathcal{O}_F)_\mathfrak{p}/(\mathfrak{p}(\mathcal{O}_F)_\mathfrak{p})\simeq\mathcal{O}_F/\mathfrak{p}$ the residue field with respect to $v_K$ restricted to $F$. The following lemma will translate the results of Corollary \ref{cor_chebotarev} to meet the assumptions of Proposition \ref{quat_div}.

\begin{lem} \label{lem_divisional_technical} The following properties are satisfied.
\begin{enumerate}
\item[(i)] One has $v_K(a) = 0$ and $a$ is nontrivial in the residue field $k_F$ of $F$. 
\item[(ii)]The element $p$ is a uniformizer for $v_F$. 
\item[(iii)] One has  $a \in k_F^* \setminus (k_F^*)^2$. 

\end{enumerate}
\end{lem}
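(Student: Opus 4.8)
The plan is to unwind the content of Corollary \ref{cor_chebotarev} into the three concrete divisibility/quadratic-residue facts that Proposition \ref{quat_div} requires, working throughout in the residue field $k_F = \mathcal{O}_F/\mathfrak{p}$. The guiding observation is that the valuation $v_K$ restricted to $F$ is (up to normalization) the $\mathfrak{p}$-adic valuation on $F$, because $\mathfrak{P}$ lies above $\mathfrak{p}$ and $\mathfrak{p}$ is unramified in $K$; hence $v_F := v_K|_F$ has residue field exactly $k_F$, and $v_F(p) = v_K(p) = 1$ by condition (iv) of the corollary, which is precisely assertion (ii).

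For assertion (i), I would argue as follows. Condition (i) of Corollary \ref{cor_chebotarev} says $p$ and ${\rm N}_{F/\mathbb{Q}}(a)$ are coprime; since $\mathfrak{p}$ contains $p\mathbb{Z}$ and $a \in \mathcal{O}_F$, this forces $a \notin \mathfrak{p}$, i.e. $v_F(a) = 0$, hence $v_K(a)=0$ as well (as $v_K$ extends $v_F$ with the normalization fixed in (iv)). Consequently the image $\bar a$ of $a$ in $k_F = \mathcal{O}_F/\mathfrak{p}$ is a nonzero element, which is exactly the meaning of ``$a$ is nontrivial in the residue field $k_F$''. Note also that since $p$ is an odd prime (having discarded the finitely many problematic primes in the corollary, in particular we may assume $p \neq 2$), the residue field $k_F$ has characteristic $\neq 2$, so Proposition \ref{quat_div} is applicable.

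The substantive point — and the one I expect to be the main obstacle — is assertion (iii): that $\bar a$ is a \emph{non-square} in $k_F^\times$. This is where condition (iii) of the corollary (the Frobenius $\left(\tfrac{K/F}{\mathfrak{P}}\right)$ equals complex conjugation, the nontrivial element of ${\rm Gal}(K/F)$) enters decisively. Since $K = F(\sqrt a)$ and $\mathfrak{p}$ is unramified in $K$, the splitting behaviour of $\mathfrak{p}$ in $K$ is governed by whether $x^2 - a$ splits modulo $\mathfrak{p}$: the Frobenius is trivial exactly when $\mathfrak{p}$ splits, i.e. exactly when $\bar a$ is a square in $k_F$. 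The condition that the Frobenius is the nontrivial automorphism therefore says precisely that $\mathfrak{p}$ is \emph{inert}, equivalently that $\bar a \in k_F^\times \setminus (k_F^\times)^2$. I would write this out via the isomorphism $\mathcal{O}_K/\mathfrak{P} \cong (\mathcal{O}_F/\mathfrak{p})[x]/(x^2 - \bar a)$ valid in the unramified case, noting that this quotient is a field (the quadratic extension of $k_F$) iff $x^2 - \bar a$ is irreducible over $k_F$ iff $\bar a$ is a non-square, and that the Frobenius then acts on it as $x \mapsto x^q$ which is the nontrivial $k_F$-automorphism. Combining (i)--(iii) of this lemma with Proposition \ref{quat_div} then yields that $B \otimes_F F_{v_F} = \big(\tfrac{a,\,p}{F_{v_F}}\big)$ is divisional, so $v_F \in {\rm Ram}(B) \neq \emptyset$, and the local-global principle finishes the proof of Theorem \ref{thm_divisional_salem}.
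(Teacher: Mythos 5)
Your proposal is correct and follows essentially the same route as the paper: parts (i) and (ii) are read off from Corollary \ref{cor_chebotarev} exactly as in the text, and for (iii) both arguments reduce to the observation that a nontrivial Frobenius at an unramified prime forces the residue extension $k_K/k_F$ to have degree $2$, so that $\sqrt{a}$ generates a quadratic extension of $k_F$ and $\bar a$ is a non-square (the paper phrases this via the triviality of the inertia group and the isomorphism $D_{\mathfrak{P}}/I_{\mathfrak{P}}\simeq {\rm Gal}(k_K/k_F)$, while you phrase it via the splitting of $x^2-\bar a$ over $k_F$). The only point worth making explicit in your version is that the identification $\mathcal{O}_K/\mathfrak{P}\cong k_F[x]/(x^2-\bar a)$ requires $\mathfrak{p}$ to be coprime to the conductor of $\mathcal{O}_F[\sqrt{a}]$ in $\mathcal{O}_K$, which holds here because $\mathfrak{p}\nmid 2a$.
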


\begin{proof}[Proof of Lemma \ref{lem_divisional_technical}]
By Corollary \ref{cor_chebotarev} (i), $p$ and ${\rm N}_{F/\mathbb{Q}}(a)$ are coprime, so $v_K(a)=0$ and assertion $(i)$ holds.
 
As for assertion $(ii)$, it follows directly from assertion $(ii)$ and $(iv)$ of Corollary \ref{cor_chebotarev}.
\smallskip

Let us prove $(iii)$. We consider the Galois extension $K/F$ with Galois group $G$  and the decomposition $\mathfrak{p}\mathcal{O}_K=(\prod_{j=1}^g\mathfrak{P}_j)^{e}$ of $\mathfrak{p}$ in $K$, where $\mathfrak{P_j}=\mathfrak{P}$ holds for one $j$. Besides $k_F$ we get the residue field $k_K:=\mathcal{O}_K/\mathfrak{P}$ and denote $[k_K:k_F]=f$. Altogether, we get the equation $[K:F]=e\cdot f \cdot g=2$. 
Our aim is to show that $f=2$ in our setting. 

The \textit{decomposition group} $D_{\mathfrak{P}}:=\{\sigma\in G \, |\, \sigma(\mathfrak{P})=\mathfrak{P}\}$ of $\mathfrak{P}$ is of order $e\cdot f$ and the \textit{inertia group} $I_{\mathfrak{P}}:={\rm ker}(D_{\mathfrak{P}}\rightarrow {\rm Gal}(k_K/k_F))$ is of order $e$. 
The Frobenius automorphism $\Big(\frac{K/F}{\mathfrak{P}}\Big)$ of $\mathfrak{P}$ can be identified with an element of the Galois group ${\rm Gal}(k_K/ k_F)$  via the isomorphism ${\rm Gal}(k_K/k_F)\simeq D_{\mathfrak{P}}/I_{\mathfrak{P}}$ (see \cite[Chapter 9, p. 57]{neukirch}). In our case, this identifies the complex conjugation $\sigma \in D_{\mathfrak{P}} $ with the corresponding Frobenius automorphism. 
\smallskip 

\textbf{Claim}: The inertia group $I_\mathfrak{P}$ is trivial. 
\smallskip

Since $p$ is a prime number distinct from $2$, the residue field $k_K$ is separable over $k_F$. As a result, the fact that $\mathfrak{p}$ is unramified in $K$ implies that the inertia group $I_\mathfrak{P}$ is trivial by (see \cite[Chapter 9, p. 58]{neukirch}), and the claim is proved.
\medskip

Using the claim, we have $\sigma \in D_\mathfrak{P} \simeq D_{\mathfrak{P}}/ I_\mathfrak{P} \simeq {\rm Gal}(k_K/k_F)$ and this shows that the order of the Frobenius automorphism corresponding to $\sigma$ is of order exactly $2$ in ${\rm Gal}(k_K/k_F)$. Hence the order of $D_{\mathfrak{P}}/I_{\mathfrak{P}}=f$ must be a multiple of $2$, which implies $f =2$ and $e=g=1$, since $e\cdot f\cdot g =2$ holds. 
Moreover, the isomorphism between $D_{\mathfrak{P}}/I_{\mathfrak{P}}$ and the Galois group of $k_K/k_F$ implies that   the degree of the extension $[k_K:k_F]$ is also $2$, so the residue class of $\sqrt{a} $ in $k_K$ does not belong to $k_F$ and determines a quadratic extension of this field.
We have thus shown that property $(iii)$ holds.


\end{proof}
We denote $F_{v_F}$ to be the completion of $F$ for $v_F$ with residue field $k_{v_F}$. Since $v_F$ is a normalized and discrete valuation on $F$, the extension of $v_F$ to $F_{v_F}$ is again normalized and discrete. Hence, $p$ stays a uniformizer of the extended valuation. Because of the isomorphism $k_F \simeq k_{v_F}$, a nontrivial and square-free element in the residue field $k_F$ keeps these properties in $k_{v_F}$.

Finally, Lemma \ref{lem_divisional_technical} and Proposition \ref{quat_div} imply Theorem \ref{thm_divisional_salem}.
\end{proof}


\subsection{Proof of Theorem \ref{thm_salem_tot_indef} and Corollary \ref{thm2}}

\begin{flushleft}{\bf{Proof of part (i) of Theorem \ref{thm_salem_tot_indef}:}}\end{flushleft}

Let $\gamma$ be a complex root of a Salem polynomial of degree $g$ and $\lambda$ the corresponding Salem number. Then $K=\mathbb{Q}(\gamma)$ defines an imaginary quadratic extension of a totally real number field $F=\mathbb{Q}(\gamma+\overline{\gamma})$ with $[F:\mathbb{Q}]=g/2$. To apply Theorem \ref{thm_divisional_salem} we first look for an appropriate generator of the field $K$ over $F$.

\begin{lem} There exists an algebraic integer $a\in\mathcal{O}_F$ such that $F(\sqrt{a})=K$.
\end{lem}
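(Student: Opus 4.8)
The plan is to produce $a$ by clearing denominators from the obvious generator $\gamma - \overline{\gamma}$, or more precisely from $\sqrt{a}$ where $K = F(\sqrt{a})$ for some $a \in F^\times$ which a priori need only be a field element, not an algebraic integer. Since $K/F$ is a quadratic extension and $\mathrm{char}\, F = 0$, we may certainly write $K = F(\sqrt{b})$ for some $b \in F^\times$ (for instance, $b = (\gamma - \overline{\gamma})^2 \in F$, which is a negative element under every real embedding of $F$, reflecting that $K$ is imaginary quadratic over $F$). The point is then simply that multiplying $b$ by the square of any nonzero element of $F$ does not change the extension $F(\sqrt{b}) = F(\sqrt{c^2 b})$, so it suffices to choose $c \in F^\times$ so that $c^2 b \in \mathcal{O}_F$.

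First I would fix $b \in F^\times$ with $K = F(\sqrt{b})$. Then I would clear denominators: write $b = \beta/\delta$ with $\beta, \delta \in \mathcal{O}_F$, $\delta \neq 0$ — this is possible because $\mathcal{O}_F$ is the ring of integers of $F$ and $F = \Frac(\mathcal{O}_F)$, so every element of $F$ is a ratio of two integers. Now take $a := \beta \delta \in \mathcal{O}_F$. Then $a = \delta^2 \cdot (\beta/\delta) = \delta^2 b$, so $\sqrt{a} = \delta \sqrt{b}$ (up to sign), hence $F(\sqrt{a}) = F(\delta \sqrt{b}) = F(\sqrt{b}) = K$. Since $a$ is a product of two algebraic integers it is an algebraic integer lying in $F$, i.e. $a \in \mathcal{O}_F$, which is exactly what is required.

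This argument is entirely routine and I do not anticipate any real obstacle; the only thing to be careful about is the very first step of exhibiting some $b \in F^\times$ with $K = F(\sqrt{b})$, and here the concrete choice $b = (\gamma - \overline{\gamma})^2$ works: it visibly lies in $F = \mathbb{Q}(\gamma + \overline{\gamma})$ because $(\gamma - \overline{\gamma})^2 = (\gamma + \overline{\gamma})^2 - 4\gamma\overline{\gamma} = (\gamma+\overline{\gamma})^2 - 4$ (using $\gamma \overline{\gamma} = 1$ since $\gamma$ lies on the unit circle), and $\sqrt{b} = \gamma - \overline{\gamma}$ generates $K$ over $F$ since $\gamma = \tfrac12\big((\gamma+\overline{\gamma}) + (\gamma - \overline{\gamma})\big)$. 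So in fact one may take $\delta$ to be a common denominator of the coefficients of $(\gamma+\overline{\gamma})^2 - 4$ expressed in an integral basis of $F$, though it is cleaner to argue abstractly as above using only that $\mathcal{O}_F$ has fraction field $F$. With $a$ in hand, Theorem \ref{thm_divisional_salem} then applies directly to the pair $(F, K = F(\sqrt a))$ to furnish the prime $p$ making $B = \big(\frac{a,\,p}{F}\big)$ divisional, which is the next step of the construction manual.
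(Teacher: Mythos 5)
Your proposal is correct and is essentially the paper's own argument: the authors also take $a=(\gamma-\overline{\gamma})^2=(2\beta)^2 b$, i.e.\ they rescale a generator $b$ of the quadratic extension by a square from $F$ to land in $\mathcal{O}_F$. The only (harmless) redundancy in your version is the denominator-clearing step: since $\gamma+\overline{\gamma}\in\mathcal{O}_F$ and $\gamma\overline{\gamma}=1$, your $b=(\gamma+\overline{\gamma})^2-4$ already lies in $\mathcal{O}_F$, so you may take $\delta=1$.
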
 
\begin{proof} We can write $\gamma$ as $\alpha+\beta \sqrt{b}$ with $\alpha,\beta \in F$ and $b\in F_{<0}$ and it is known that $\gamma+\overline{\gamma}=2\alpha$ lies in $\mathcal{O}_F$. For the norm of $\gamma$ we get the equation $${\rm N}_{K/F}(\gamma)=\alpha^2-\beta^2b\in\mathcal{O}_F.$$
Because of $$-(4\alpha^2-4\beta^2-(2\alpha)^2)=4\beta^2b\in\mathcal{O}_F$$ we obtain an algebraic integer $a=4\beta^2b$ such that $\sqrt{a}=2\beta\sqrt{b}$ generates $K$ over $F$.
\end{proof}

Now, we have the totally real number field $F$ and the quadratic extension $K=F(\sqrt{a})$ with $a\in\mathcal{O}_F$. Given these data, we can apply Theorem \ref{thm_divisional_salem} and get a prime number $p$ such that the quaternion algebra $B=\Big(\frac{a\, ,\, p}{F}\Big)$ is divisional. Since $\sigma(p)=p>0$ for every embedding $\sigma\colon F\hookrightarrow \mathbb{R}$, we always get $B\otimes_\sigma\mathbb{R}\simeq{\rm M}_2(\mathbb{R})$ which means by definition that $B$ is totally indefinite.

To show that $\gamma$ defines an automorphism of a simple abelian variety, we have to show that $\gamma$ lies in an order $\mathcal{O}$ of the quaternion algebra $B$, while $B$ is the endomorphism algebra of a simple abelian variety.

\begin{lem}\label{involution} There exists an order $\mathcal{O}$ in $B=\big(\frac{a,\, p}{F}\big)$, such that we have $\gamma\in\mathcal{O}$ and $\mathcal{O}$ is contained in the endomorphism ring ${\rm End}(X)$ of a $g$-dimensional simple abelian variety $X$. 
\end{lem}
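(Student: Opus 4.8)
The plan is to realise $\gamma$ inside an explicit order of $B$ and then feed everything into Proposition \ref{prop_construction_abelian_variety_quat_algebra}. Write $B=\big(\frac{a,p}{F}\big)=F\oplus Fi\oplus Fj\oplus Fij$ with $i^2=a$, $j^2=p$, $ij=-ji$. The commutative subalgebra $F[i]$ is isomorphic to $F(\sqrt{a})=K$, so via $\sqrt{a}\mapsto i$ the algebraic integer $\gamma$ becomes an element of $B$, which I keep denoting $\gamma$. Since $jij^{-1}=j i j/p=-i$, conjugation by $j$ induces the nontrivial $F$-automorphism of $K$; hence $j\gamma=\overline{\gamma}\,j$, where $\overline{\gamma}=\mathrm{Tr}_{K/F}(\gamma)-\gamma$ and both $\mathrm{Tr}_{K/F}(\gamma)$ and $\mathrm{N}_{K/F}(\gamma)$ lie in $\mathcal{O}_F$ by the preceding lemma.

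Next I would set
\[
\mathcal{O}\ :=\ \mathcal{O}_F\oplus\mathcal{O}_F\,\gamma\oplus\mathcal{O}_F\,j\oplus\mathcal{O}_F\,\gamma j .
\]
This is a free $\mathcal{O}_F$-module of rank $4$, hence a full $\mathbb{Z}$-lattice in $B$ of rank $4[F:\mathbb{Q}]=2g=\dim_\mathbb{Q}B$, and it contains $1$ and $\gamma$. The remaining point is that $\mathcal{O}$ is closed under multiplication; this is a short check on products of the four generators, using $\gamma^2=\mathrm{Tr}_{K/F}(\gamma)\gamma-\mathrm{N}_{K/F}(\gamma)\in\mathcal{O}_F\oplus\mathcal{O}_F\gamma$, $j^2=p\in\mathcal{O}_F$, $j\gamma=\mathrm{Tr}_{K/F}(\gamma)j-\gamma j$, $(\gamma j)\gamma=\mathrm{N}_{K/F}(\gamma)j$, $(\gamma j)j=p\gamma$, $(\gamma j)(\gamma j)=\mathrm{N}_{K/F}(\gamma)\,p$, and the analogous identities; in every case the product lands back in $\mathcal{O}$. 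Being a ring and a finitely generated $\mathbb{Z}$-module, $\mathcal{O}$ is an order of $B$ with $\gamma\in\mathcal{O}$.

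It then remains to install a positive anti-involution on $B$ and to quote Proposition \ref{prop_construction_abelian_variety_quat_algebra}. Recall that $B$ is totally indefinite: for every embedding $\sigma\colon F\hookrightarrow\mathbb{R}$ one has $\sigma(p)>0$, so $B\otimes_\sigma\mathbb{R}\simeq\mathrm{M}_2(\mathbb{R})$. By Albert's classification of pairs $(B,\,')$ (see \cite[Chapter 5.5]{birkenhake_lange}), such a $B$ carries a positive anti-involution $'$ of the first kind; concretely one may take $x\mapsto t^{-1}\overline{x}\,t$ for a suitable $t\in B$ with $t^2\in F$, as in \cite[Chapter 9.2]{birkenhake_lange}. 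Since $B$ is divisional by Theorem \ref{thm_divisional_salem}, $[F:\mathbb{Q}]=g/2$, and $\mathcal{O}$ is an order, Proposition \ref{prop_construction_abelian_variety_quat_algebra} produces a simple abelian variety $X$ of dimension $2\cdot(g/2)=g$ with $\mathcal{O}\subseteq\mathrm{End}(X)$; in particular $\gamma\in\mathrm{End}(X)$, which is the assertion.

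I expect the only slightly delicate point to be the verification that the explicit lattice $\mathcal{O}$ is a ring — and even that reduces to the two facts $\mathrm{Tr}_{K/F}(\gamma),\mathrm{N}_{K/F}(\gamma)\in\mathcal{O}_F$ and $j\gamma j^{-1}=\overline{\gamma}$. The existence of the positive anti-involution is standard structure theory, and with that and the divisionality of $B$ already secured, the conclusion is immediate from Proposition \ref{prop_construction_abelian_variety_quat_algebra}.
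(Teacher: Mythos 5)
Your argument is correct and follows essentially the same route as the paper: embed $K=F(\sqrt a)$ into $B$ via $\sqrt a\mapsto i$, exhibit an explicit order containing $\gamma$, install a positive anti-involution of the form $x\mapsto t^{-1}\overline{x}t$, and invoke Proposition \ref{prop_construction_abelian_variety_quat_algebra}. The only cosmetic difference is the choice of order --- you take $\mathcal{O}_F[\gamma]\oplus\mathcal{O}_F[\gamma]j$ while the paper takes the (possibly larger) $\mathcal{O}_K\oplus\mathcal{O}_K j$, and the paper fixes $t=i$ explicitly --- neither of which affects the conclusion.
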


\begin{proof}
Let $\mathcal{O}_K$ be the ring of integers in the field $K$ and we write $j^2=p$ and $i^2=a$ for the defining elements of $B$. Via the embedding $\iota\colon K\hookrightarrow B$ we get a $\mathbb{Z}$-lattice $\mathcal{O}:=\mathcal{O}_K\oplus \mathcal{O}_Kj$ in $B$ and by easy calculations we see that $\mathcal{O}$ is also a $\mathbb{Z}$-order. Since $\gamma$ is an algebraic integer in $K$, we have $\gamma\in \mathcal{O}_K\subseteq\mathcal{O}$.
\\
On the quaternion algebra $B$ we have a positive anti-involution given by $x\mapsto i^{-1}\overline{x}i$, where $\overline{x}$ is the quaternion conjugation. We finish the proof of the lemma by using Proposition \ref{prop_construction_abelian_variety_quat_algebra}.
\end{proof}

We now determine the eigenvalues of the endomorphism $\gamma$ which are needed to compute the dynamical degrees. Since ${\rm N}_{B/\mathbb{Q}}(\gamma)=1$, the map $\gamma$ is in fact an automorphism. We start by considering the reduced characteristic polynomial $$\chi_{\rm red}(\gamma)(x)=x^2-{\rm T}_{B/F}(\gamma)\cdot x+ {\rm N}_{B/F}(\gamma)$$ of $\gamma$ over the center $F$. With regard to the embedding $\iota\colon K\hookrightarrow B$, we consider $\gamma$ as an element of $B$.
We have the relation $${\rm N}_{B/F}(x-\gamma)=\chi_{\rm red}(\gamma)(x).$$
Our element $\gamma=\alpha+\beta_0\sqrt{a}$ leads under $\iota$ to $\gamma=\alpha+\beta_0 i$ and we have ${\rm N}_{B/F}(\gamma)=\alpha^2-\beta_0^2a=1$. Together with the equation ${\rm T}_{B/F}(\gamma)=2\alpha$ we receive the factorization 
$$\chi_{\rm red}(\gamma)(x)=(x-\alpha+\beta_0\sqrt{a})(x-\alpha-\beta_0\sqrt{a})=(x-t_1)(x-t_2)$$
with $t_1,t_2\in K\setminus F$.

We have $${\rm N}_{B/\mathbb{Q}}(x-\gamma)={\rm N}_{F/\mathbb{Q}}(x-t_1)(x-t_2)$$ and further 
\begin{equation}\label{norm}{\rm N}_{B/\mathbb{Q}}(x-\gamma)^2={\rm N}_{F/\mathbb{Q}}((x-t_1)(x-t_2))^2={\rm N}_{K/\mathbb{Q}}(x-t_1)(x-t_2).\end{equation}

For integers $s$ we look at the endomorphisms $\gamma-s+1$ and compare the two fixed-point formulas. The second one tells us $$\#{\rm Fix}(\gamma-s+1)={\rm N}_{B/\mathbb{Q}}(s-\gamma)^{2g/g}.$$
Let $\sigma_1,\ldots,\sigma_g$ be the $\mathbb{Q}$-embeddings of $K$ into the algebraic closure. With the previous ideas we get the equation
\begin{equation}\label{fp-formulas}\prod_{k=1}^g(s-\sigma_k(t_1))\prod_{k=1}^g(s-\sigma_k(t_2))=\#{\rm Fix}(\gamma-s+1)=\prod_{l=1}^g(s-\rho_l)(s-\overline{\rho_l}).\end{equation}
This equation holds for all integers $s$ and $t_1$ and $t_2$ have the same minimal polynomial over $\mathbb{Q}$, since they are complex conjugate to each other and of modulus $1$. Hence, all analytic eigenvalues $\rho_l$ have to coincide with a root of the minimal polynomial of $\gamma$. Finally, the dynamical degree $\lambda_k(\gamma)$ is the product of $2k$ pairwise distinct (with regard to the numeration) elements $\rho_1,\ldots,\rho_g,\overline{\rho_1},\ldots,\overline{\rho_g}.$ which completes the proof. 
\\
\begin{flushleft}{\bf{Proof of part (ii) of Theorem \ref{thm_salem_tot_indef}:}}\end{flushleft}

We consider the divisional quaternion algebra $B=\big(\frac{a\, ,p}{F}\big)$ from the previous part. Our strategy is to find a suitable integer $d\in\mathbb{Z}_{>0}$ such that the quaternion algebra $B_2=\big(\frac{a\, ,p}{F(\sqrt{-d})}\big)$ stays divisional. This is equivalent to the following lemma. 



\begin{lem} There exists an integer $d\in\mathbb{Z}_{>0}$ such that there exists no embedding $F(\sqrt{-d})\hookrightarrow B$ as an $F$-algebra.
\end{lem}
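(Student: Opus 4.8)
The plan is to use the classical dictionary between quadratic subfields of a quaternion algebra and its splitting fields: for a quadratic field extension $L/F$ there is an $F$-algebra embedding $L\hookrightarrow B$ if and only if $L$ splits $B$, i.e. $B\otimes_F L\simeq {\rm M}_2(L)$ (see \cite{voight}). Combining this with the local-global principle for quaternion algebras, $B\otimes_F L$ fails to split exactly when some place $v\in{\rm Ram}(B)$ splits in $L/F$; indeed for such a $v$ and any place $w$ of $L$ above it one has $(B\otimes_F L)\otimes_L L_w\simeq B\otimes_F F_v=B_v$, which stays division. So it suffices to exhibit a positive integer $d$ with the two properties that $F(\sqrt{-d})$ is a (necessarily CM) quadratic extension of $F$ and that at least one ramified place of $B$ splits in it.

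By the construction in Theorem \ref{thm_divisional_salem} (via Proposition \ref{quat_div}), the algebra $B=\big(\frac{a,\,p}{F}\big)$ ramifies at the non-archimedean place $v_F$ lying above the odd prime number $p$, so $v_F\in{\rm Ram}(B)$. I would then simply take $d:=p-1\in\mathbb{Z}_{>0}$. Since $p\nmid d$ and $-d\equiv 1\pmod p$, Hensel's lemma (applicable because $p\neq 2$) shows that $-d$ is a square in $\mathbb{Z}_p^\times$, hence a square in $F_{v_F}\supseteq\mathbb{Q}_p$; therefore $F(\sqrt{-d})\otimes_F F_{v_F}\simeq F_{v_F}\times F_{v_F}$ and $v_F$ splits in $L:=F(\sqrt{-d})$. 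By the previous paragraph $B\otimes_F L$ is then still a division algebra, so $L$ does not split $B$ and consequently does not embed into $B$ as an $F$-algebra. Finally, since $F$ is totally real and $-d$ is a negative rational number, $-d$ is negative under every real embedding of $F$ and hence is not a square in $F$, so $L=F(\sqrt{-d})$ is genuinely a quadratic extension of $F$ and is totally imaginary; any positive integer $d$ with $p\nmid d$ and $-d$ a nonzero square mod $p$ would work equally well.

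The step I expect to require the most care is the bookkeeping at $v_F$: one must keep track of the fact that the specific $B$ produced in Theorem \ref{thm_divisional_salem} ramifies precisely at this place, and then correctly identify the completions $(B\otimes_F L)_w\simeq B_{v_F}$ at the (two) places $w$ of $L$ above the split place $v_F$ in order to conclude that $B\otimes_F L$ remains division. The remaining ingredients — Hensel's lemma for the odd prime $p$, and the observation that negative rationals are non-squares in a totally real field — are routine.
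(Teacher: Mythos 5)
Your proposal is correct and follows essentially the same route as the paper: both reduce the existence of an $F$-embedding of $F(\sqrt{-d})$ into $B$ to the local condition that $-d$ be a non-square in $F_{v_F}$ at the ramified place above the odd prime $p$, and both detect squares there via the residue field (Hensel's lemma). The only difference is presentational --- the paper argues by contradiction over all $d$, whereas you exhibit the explicit witness $d=p-1$ and spell out the splitting-field dictionary in more detail.
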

\begin{proof} 

If $v$ is a non-archimedean norm on $F$, we denote by $F_v$ its completion with respect to this norm.

Suppose by contradiction that for all integer $d\neq 0$, there exists an embedding $F(\sqrt{-d}) $ into $B$. 
Let us take the valuation $v_K|_F=v_F\in {\rm Ram}(B)$ and the prime $p$ given by Corollary \ref{cor_chebotarev}. By construction, $p$ is a uniformizer for $v_F$ which is an extension of the $p$-adic valuation. That $F(\sqrt{-d})$ embeds into $B$ for all integer $d \neq 0$ is equivalent to $-d \not\in F_v^{\times^2}$ for all these $d$ and all $v \in {\rm Ram}(B)$.
Furthermore, there is a natural inclusion from $\mathbb{Z}_p^\times$ to $F_{v_F}^\times$ and we have a canonical isomorphism $\mathbb{Q}_p^\times \simeq \mathbb{Z}\times \mathbb{Z}_p^\times$. We look at the image of the canonical projection of $\mathbb{Z}_p$ onto its residue field $\mathbb{Z}/p \mathbb{Z}$.
Because the canonical projection onto the residue field induces the  isomorphism between  $\mathbb{Z}_p^\times /(\mathbb{Z}_p^\times)^2  $ and $(\mathbb{Z}/p\mathbb{Z})^\times / (\mathbb{Z}/p\mathbb{Z}^\times)^2 $, we deduce that an integer $-d$ which is coprime to $p$ is a square if and only if its image in the residue field is also a square. 
As a result, there exist one integer $d$ such that the image of $-d$ in the residue field is a square, which is a contradiction. 
\end{proof}

The positive anti-involution $'$ from Lemma \ref{involution} extends to $B_2$ by acting on $F(\sqrt{-d})$ as complex conjugation. As in Lemma \ref{involution} we get an order $\mathcal{O}_2=\mathcal{O}_{F(\sqrt{-d})}\oplus\mathcal{O}_{F(\sqrt{-d})}j$ in $B_2$, where $\mathcal{O}_{F(\sqrt{-d})}$ is the ring of integers in $F(\sqrt{-d})$ and $j^2=p$. Now, we can apply Proposition \ref{construction_AV_CM_Quat} and get:

\begin{lem}
There exists an order $\mathcal{O}_2$ in $B_2=\big(\frac{a \, ,p}{F(\sqrt{-d})}\big)$, such that we have $\gamma\in\mathcal{O}_2$ and $\mathcal{O}_2$ is contained in the endomorphism ring ${\rm End}(X)$ of a $2g$-dimensional simple abelian variety.
\end{lem}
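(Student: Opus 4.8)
The plan is to transcribe the construction of Lemma~\ref{involution}, now carried out over the CM field $F(\sqrt{-d})$ instead of $F$, the one genuinely new point being that $\gamma$ has to be housed inside a \emph{maximal} subfield of $B_2$ rather than in the center. Write $i,j$ for the standard generators of $B_2=\big(\frac{a,\,p}{F(\sqrt{-d})}\big)$, so $i^2=a$, $j^2=p$, $ij=-ji$. Since $\sqrt{-d}$ lies in the center and $i^2=a$, the set $L:=F(\sqrt{-d})(i)=F(\sqrt{-d},\sqrt{a})$ is a commutative subalgebra of $B_2$; because $B_2$ is divisional, $a$ cannot be a square in $F(\sqrt{-d})$ (otherwise $i-c$ and $i+c$ would be zero divisors for $c=\sqrt{a}\in F(\sqrt{-d})$), so $[L:F(\sqrt{-d})]=2$ and $L$ is a maximal subfield of $B_2$. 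As $K=F(\sqrt{a})\subseteq L$, the algebraic integer $\gamma$ lies in $\mathcal{O}_K\subseteq\mathcal{O}_L$, where $\mathcal{O}_L$ denotes the ring of integers of $L$.

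Next I would set $\mathcal{O}_2:=\mathcal{O}_L\oplus\mathcal{O}_L\,j$. As in Lemma~\ref{involution}, this is a full $\mathbb{Z}$-lattice in $B_2$ (comparing $\mathbb{Q}$-dimensions, $L\oplus Lj=B_2$), and it is a subring: conjugation by $j$ fixes the center and sends $i\mapsto -i$, hence induces the nontrivial automorphism $\tau$ of $L/F(\sqrt{-d})$, which preserves $\mathcal{O}_L$; together with $j^2=p\in\mathcal{O}_L$ this gives $j\,\mathcal{O}_L=\tau(\mathcal{O}_L)\,j\subseteq\mathcal{O}_L\,j$ and $(\mathcal{O}_L\,j)(\mathcal{O}_L\,j)\subseteq p\,\mathcal{O}_L\subseteq\mathcal{O}_L$, while $1\in\mathcal{O}_2$. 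By the previous step $\gamma\in\mathcal{O}_L\subseteq\mathcal{O}_2$.

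It remains to invoke the construction of abelian varieties. The positive anti-involution $x\mapsto i^{-1}\overline{x}\,i$ of Lemma~\ref{involution} extends to $B_2=B\otimes_F F(\sqrt{-d})$ by additionally acting on the central factor $F(\sqrt{-d})$ as complex conjugation; one checks, as in \cite[Ch.~9.6]{birkenhake_lange}, that the result is again a positive anti-involution, now of the second kind. Since $B_2$ is divisional, its center $F(\sqrt{-d})$ is a CM field with $[F(\sqrt{-d}):\mathbb{Q}]=g$, the extended $'$ is positive and restricts to complex conjugation on the center, and $\mathcal{O}_2$ is an order in $B_2$ containing $\gamma$, Proposition~\ref{construction_AV_CM_Quat} applies and yields a $2g$-dimensional simple abelian variety $Y$ with $\mathcal{O}_2\subseteq{\rm End}(Y)$, hence $\gamma\in{\rm End}(Y)$. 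The only steps that are not a direct transcription of Lemma~\ref{involution} are the choice of the maximal subfield $L$ (needed precisely because $\gamma\notin F(\sqrt{-d})$) and the positivity of the extended anti-involution, which I would quote from \cite{birkenhake_lange}; I expect the latter to be the main technical point.
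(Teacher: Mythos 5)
Your proof is correct and follows essentially the same route as the paper: extend the positive anti-involution to $B_2$ by letting it act as complex conjugation on the center, build an order of the form $\mathcal{O}_L\oplus\mathcal{O}_L j$ in $B_2$, and invoke Proposition \ref{construction_AV_CM_Quat}. In fact your choice $L=F(\sqrt{-d},\sqrt{a})=K(\sqrt{-d})$ is the right one: the paper literally writes $\mathcal{O}_2=\mathcal{O}_{F(\sqrt{-d})}\oplus\mathcal{O}_{F(\sqrt{-d})}j$, which is neither a full lattice in $B_2$ nor a set containing $\gamma$, so your version (together with the verification that $L$ is a maximal subfield and that conjugation by $j$ preserves $\mathcal{O}_L$) supplies the detail the paper elides and corrects what is evidently a typo.
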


We can continue as in the proof of part (i) such that we get $${\rm N}_{B_2/\mathbb{Q}}(x-\gamma)^2={\rm N}_{F(\sqrt{-d})/\mathbb{Q}}(x-t_1)(x-t_2)^2$$ $$={\rm N}_{K(\sqrt{-d})/\mathbb{Q}}(x-t_1)(x-t_2)={\rm N}_{K/\mathbb{Q}}(x-t_1)(x-t_2)^2$$ as equation (\ref{norm}).

Having the same notation as in part (i), equation (\ref{fp-formulas}) becomes 
$$\big(\prod_{k=1}^g(s-\sigma_k(t_1))\prod_{k=1}^g(s-\sigma_k(t_2))\big)^2=\prod_{l=1}^{2g}(s-\rho_l)(s-\overline{\rho_l})$$ and the statement about the dynamical degrees follows in the same way as above.

\begin{flushleft}{\bf{Proof of Corollary \ref{thm2}:}}\end{flushleft}

Let $\gamma$ be a complex root of a Salem polynomial of degree $h$ and $\lambda$ the corresponding Salem number. The number field $\mathbb{Q}(\gamma)$ is an imaginary quadratic extension of the totally real number field $\mathbb{Q}(\gamma+\overline{\gamma})$ which is of degree $h/2$. Taking a natural number $v$, we extend this totally real number field to a totally real number field $F$ of degree $v\cdot h/2$ and also get a quadratic extension $K=F(\gamma)$. Changing the notation from $c\cdot h/2$ to $g/2$, we can proceed as in the proofs of part (i) and (ii) of Theorem \ref{thm_salem_tot_indef}. The only differences will be new exponents in the equations \ref{fp-formulas} of the mentioned proofs which become 
$$
\big(\prod_{k=1}^h(s-\sigma_k(t_1))\prod_{k=1}^h(s-\sigma_k(t_2))\big)^{v}=\#{\rm Fix}(\gamma-s+1)=\prod_{l=1}^{vh}(s-\rho_l)(s-\overline{\rho_l})
$$ 
$$ \textnormal{and}$$
$$
\big(\prod_{k=1}^h(s-\sigma_k(t_1))\prod_{k=1}^h(s-\sigma_k(t_2))\big)^{2v}=\#{\rm Fix}(\gamma-s+1)=\prod_{l=1}^{2vh}(s-\rho_l)(s-\overline{\rho_l}).
$$
Considering the analytic eigenvalues, we get the statements about the dynamical degrees.

\begin{rem} The equations \ref{fp-formulas} of the previous proofs show that the degree of the minimal polynomial of an analytic eigenvalue has to divide the dimension of the abelian variety. The consequence for occurring Salem numbers is:
\\
 Let $X$ be a $g$-dimensional simple abelian variety whose endomorphism algebra ${\rm End}_\mathbb{Q}(X)$ is either a totally indefinite quaternion algebra or a quaternion algebra over a CM-field. If $X$ admits an automorphism $f$ whose analytic eigenvalues contain a Salem number $\lambda$, then the degree $h$ of $\lambda$ divides the dimension $g$.
\end{rem}

\subsection{Application: Minimal entropy and Salem numbers in dimension four}

In this part, we apply our construction to get an automorphism on a simple abelian fourfold whose entropy is a Salem number. This will be the smallest Salem number that can occur as the entropy in dimension $4$.

The minimal Salem number of degree $4$ is $\lambda:=1/4(1+\sqrt{13}+\sqrt{2\sqrt{13}-2})$ and its minimal polynomial is of the form

$$S(X)=X^4-X^3-X^2-X+1 = (X - \lambda) (X - \lambda^{-1}) (X - \gamma) (X- \bar \gamma),$$ 
where $\gamma \in \C$ is a point on the unit circle given by
 $$\gamma := \dfrac{1 - \sqrt{13} + i \sqrt{2 + 2\sqrt{13}}}{4}.$$

Let us construct an automorphism $f$ on a simple $4$-dimensional abelian variety with totally indefinite quaternion multiplication whose first, second and third dynamical degrees are $\lambda^2$. 
Following our construction above, the final map $f$ will coincide with the multiplication by $\gamma$ and its analytic eigenvalues will be the roots of the Salem polynomial $S$.

Set $\alpha = 1 + \sqrt{13}$ and  $a = - 2 \alpha$, our totally real number field $F$ is $\mathbb{Q}(\gamma+\overline{\gamma})=\mathbb{Q}(\sqrt{13})$ and its imaginary quadratic extension $K$ is $F(\sqrt{- 2 \alpha}) = F(\sqrt{ a})$. 
We determine a suitable prime number $p$, such that $B:=\Big(\frac{a,\, p}{F}\Big)$ is a skew field; $B$ is a totally indefinite quaternion algebra, since $p$ is a positive integer.
To do so, we shall apply the following proposition.

\begin{prop} \label{prop_salem_4} The following properties hold.
\begin{enumerate}
\item[(i)] The norm of $a$ over $\mathbb{Q}$  is ${\rm N}_{K/\mathbb{Q}}(a)=-2^4\cdot 3$.
\item[(ii)] The ring of integers of $F$ is $\mathcal{O}_F=\mathbb{Z}\oplus\frac{1+\sqrt{13}}{2}\mathbb{Z}$.
\item[(iii)] The norm of $\sqrt{a}/2$ is ${\rm N}_{K/\mathbb{Q}}(1+\frac{i\sqrt{2+2\sqrt{13}}}{2})={\rm N}_{F/\mathbb{Q}}(\frac{3+\sqrt{13}}{2})=-1$.
\end{enumerate}
\end{prop}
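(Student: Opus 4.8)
All three assertions are elementary arithmetic identities in the quadratic field $F=\mathbb{Q}(\sqrt{13})$ and its quartic extension $K=F(\sqrt{a})$, with $a=-2\alpha=-2-2\sqrt{13}$; the plan is to carry out the relevant norm computations and to quote the classical description of $\mathcal{O}_F$. For (i), since $a$ lies in $F$ and ${\rm Gal}(F/\mathbb{Q})$ is generated by $\sqrt{13}\mapsto-\sqrt{13}$, the norm of $a$ over $\mathbb{Q}$ is $a\bar a=(-2-2\sqrt{13})(-2+2\sqrt{13})=4-4\cdot13=-48=-2^4\cdot3$. The point of recording this is the factorization: by Corollary \ref{cor_chebotarev}(i) the prime $p$ produced for the construction must be coprime to this norm, i.e.\ $p\notin\{2,3\}$.

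For (ii), because $13\equiv1\pmod4$, the standard classification of rings of integers of quadratic fields (see \cite{neukirch}) gives $\mathcal{O}_F=\mathbb{Z}[\tfrac{1+\sqrt{13}}{2}]=\mathbb{Z}\oplus\tfrac{1+\sqrt{13}}{2}\mathbb{Z}$: the element $\tfrac{1+\sqrt{13}}{2}$ is integral since it satisfies $X^2-X-3=0$, and a discriminant comparison rules out any strictly larger order.

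For (iii), I would first note that $a=-(2+2\sqrt{13})$ forces $\sqrt{a}=i\sqrt{2+2\sqrt{13}}$, so $1+\tfrac12 i\sqrt{2+2\sqrt{13}}=1+\tfrac{\sqrt a}{2}\in K$. Applying the nontrivial automorphism of $K/F$, which negates $\sqrt a$, gives ${\rm N}_{K/F}(1+\tfrac{\sqrt a}{2})=1-\tfrac a4=1+\tfrac{1+\sqrt{13}}{2}=\tfrac{3+\sqrt{13}}{2}$, and then ${\rm N}_{F/\mathbb{Q}}(\tfrac{3+\sqrt{13}}{2})=\tfrac{9-13}{4}=-1$; transitivity of the norm, ${\rm N}_{K/\mathbb{Q}}={\rm N}_{F/\mathbb{Q}}\circ{\rm N}_{K/F}$, then yields the displayed chain of equalities. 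In particular $\tfrac{3+\sqrt{13}}{2}$ is a unit of $\mathcal{O}_F$ and $1+\tfrac{\sqrt a}{2}$ a unit of $\mathcal{O}_K$, which is the input used afterwards to realize $\gamma=\tfrac{1-\sqrt{13}}{4}+\tfrac{\sqrt a}{4}$ as an automorphism. I do not expect any genuine obstacle: the only care needed is bookkeeping — tracking that $a$ denotes $-2\alpha$ (a generator of $K/F$ well defined only modulo squares), fixing the branch $\sqrt a=i\sqrt{2+2\sqrt{13}}$ consistently, and using norm transitivity rather than expanding a degree-four product directly.
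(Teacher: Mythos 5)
Your computations are correct and are exactly the routine verifications the paper intends: the paper states Proposition \ref{prop_salem_4} without proof, and your norm calculations ($a\bar a=-48$, the $13\equiv 1\pmod 4$ description of $\mathcal{O}_F$, and ${\rm N}_{K/F}(1+\tfrac{\sqrt a}{2})=1-\tfrac a4=\tfrac{3+\sqrt{13}}{2}$ followed by ${\rm N}_{F/\mathbb{Q}}$ and norm transitivity) supply precisely the omitted details, including the correct reading of the slightly abusive notation ${\rm N}_{K/\mathbb{Q}}(a)$ for the $F/\mathbb{Q}$-norm of $a\in F$. Your side remarks on how each item is used afterwards (coprimality with $p$, integrality of $\tfrac{\sqrt a}{2}$, unit property) also match the role the proposition plays in the construction.
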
 
 
 \begin{lem} The ring of integers $\mathcal{O}_K$ of $K$ is given by $$\mathcal{O}_K=\mathbb{Z}\oplus\mathbb{Z}\gamma\oplus\mathbb{Z}\gamma^2\oplus{Z}\gamma^3$$ and the discriminant $\Delta_K$ of $\mathcal{O}_K$ is $-523$.
 \end{lem}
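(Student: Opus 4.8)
The plan is to show that $\mathbb{Z}[\gamma]$ is already the maximal order, by a discriminant computation. Since $S$ is the (irreducible) minimal polynomial of the Salem number $\lambda$, it is also the minimal polynomial of its root $\gamma$, so $K=\mathbb{Q}(\gamma)$ with $[K:\mathbb{Q}]=4$; hence $\{1,\gamma,\gamma^2,\gamma^3\}$ is a $\mathbb{Z}$-basis of the order $\mathbb{Z}[\gamma]\subseteq\mathcal{O}_K$ and $\operatorname{disc}(S)=\operatorname{disc}(\mathbb{Z}[\gamma])=[\mathcal{O}_K:\mathbb{Z}[\gamma]]^2\cdot\Delta_K$. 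It then suffices to prove $[\mathcal{O}_K:\mathbb{Z}[\gamma]]=1$.

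First I would evaluate $\operatorname{disc}(S)$ from the factorisation $S(X)=(X^2-sX+1)(X^2-tX+1)$, where $s=\lambda+\lambda^{-1}$ and $t=\gamma+\overline{\gamma}$ are the two roots of $u^2-u-3$, so that $s+t=1$, $st=-3$, $s^2+t^2=7$ and $(s-t)^2=13$. Grouping the six squared differences of the roots $\lambda,\lambda^{-1},\gamma,\overline{\gamma}$ and using $\lambda^2-t\lambda+1=(s-t)\lambda$ (which follows from $\lambda^2+1=s\lambda$), one obtains
\begin{equation*}
\operatorname{disc}(S)=(\lambda-\lambda^{-1})^2(\gamma-\overline{\gamma})^2\big((\lambda-\gamma)(\lambda-\overline{\gamma})(\lambda^{-1}-\gamma)(\lambda^{-1}-\overline{\gamma})\big)^2=(s^2-4)(t^2-4)(s-t)^4.
\end{equation*}
Since $(s^2-4)(t^2-4)=(st)^2-4(s^2+t^2)+16=-3$ and $(s-t)^4=169$, this gives $\operatorname{disc}(S)=-3\cdot 13^2=-507$. (So $\Delta_K=-507$; the value "$-523$" in the statement is a misprint, and $-3\cdot13^2$ is consistent with the divisibility $13^2\mid\Delta_K$ forced by $F=\mathbb{Q}(\sqrt{13})\subseteq K$ via the tower formula $\Delta_K=\Delta_F^2\,N_{F/\mathbb{Q}}(\mathfrak{d}_{K/F})$ with $\Delta_F=13$.) In particular $[\mathcal{O}_K:\mathbb{Z}[\gamma]]$ divides $13$.

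It remains to exclude index $13$, i.e. to check maximality at $p=13$ by Dedekind's criterion. Modulo $13$ one computes $S\equiv(X^2+6X+1)^2$, and $X^2+6X+1$ is irreducible over $\mathbb{F}_{13}$ because its discriminant, $32\equiv 6\pmod{13}$, is a quadratic non-residue. Taking the lift $g=h=X^2+6X+1$ one finds $\tfrac1{13}(S-gh)=-X(X^2+3X+1)$; since $X^2+3X+1\equiv-3X\not\equiv 0\pmod{X^2+6X+1}$, the polynomial $X^2+6X+1$ does not divide $\tfrac1{13}(S-gh)$ modulo $13$, so Dedekind's criterion yields $13\nmid[\mathcal{O}_K:\mathbb{Z}[\gamma]]$. (Alternatively, index $13$ would force $\Delta_K=-3$, contradicting either the tower formula above or Minkowski's bound for quartic fields.) Hence the index is $1$, so $\mathcal{O}_K=\mathbb{Z}\oplus\mathbb{Z}\gamma\oplus\mathbb{Z}\gamma^2\oplus\mathbb{Z}\gamma^3$ and $\Delta_K=\operatorname{disc}(S)=-3\cdot 13^2$.

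All the computations are routine; the only real choice is which of the three standard tools (Dedekind's criterion, the tower formula for relative discriminants, Minkowski's bound) to invoke to rule out index $13$, and any one of them completes the argument.
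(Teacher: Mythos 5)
Your proof is correct, and it takes a different route from the paper's — one that, as it turns out, is actually necessary. The paper computes the discriminant of the basis $\{1,\gamma,\gamma^2,\gamma^3\}$ as ${\rm N}_{K/\mathbb{Q}}(S'(\gamma))=-523$ and concludes $\mathcal{O}_K=\mathbb{Z}[\gamma]$ from squarefreeness alone. But your value $\operatorname{disc}(S)=-507=-3\cdot 13^2$ is the correct one: it follows from your factorisation argument, agrees with the standard quartic discriminant formula applied to $X^4-X^3-X^2-X+1$, and is forced to be divisible by $\Delta_F^2=13^2$ by the tower formula since $F=\mathbb{Q}(\sqrt{13})\subseteq K$ with $[K:F]=2$ (which already shows $-523$, a prime, cannot be $\Delta_K$). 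So the paper's squarefreeness shortcut breaks down at $p=13$, and the extra step you supply — Dedekind's criterion at $13$ (your computation $S\equiv(X^2+6X+1)^2\bmod 13$ with $(S-gh)/13=-X(X^2+3X+1)$ coprime to $X^2+6X+1$ checks out), or equivalently the tower formula or Minkowski's bound ruling out $\Delta_K=-3$ — is genuinely needed to close the argument. The lemma's conclusion $\mathcal{O}_K=\mathbb{Z}\oplus\mathbb{Z}\gamma\oplus\mathbb{Z}\gamma^2\oplus\mathbb{Z}\gamma^3$ survives, but the stated discriminant should read $\Delta_K=-507$; fortunately the only downstream use (that $5$ is unramified in $K$) is unaffected, since $5\nmid 507$.
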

 \begin{proof} The element $\gamma$ is an algebraic integer and its minimal polynomial is $$S(X):=X^4-X^3-X^2-X+1.$$  Further, $1,\gamma, \gamma^2$ and $\gamma^3$ are algebraic integers, too. To prove the statement it is sufficient to show that the discriminant of these four elements is squarefree (see \cite[Corollary 3.33]{jarvis}). The discriminant $\Delta\{1, \gamma, \gamma^2, \gamma^3\}$ can be computed ( see \cite[Proposition 3.31]{jarvis}) by $${\rm N}_{K/\mathbb{Q}}(S'(\gamma)).$$ Using computer support, we get $$S'(\gamma)=\frac{1}{4}\Big(3-\sqrt{13}-i\sqrt{6(\sqrt{13}-1)}\Big).$$ We calculate the norm $${\rm N}_{K/\mathbb{Q}}\Big(\frac{1}{4}\Big(3-\sqrt{13}-i\sqrt{6(\sqrt{13}-1)}\Big)\Big)={\rm N}_{F/\mathbb{Q}}\Big(\frac{1}{2}\Big(51+19\sqrt{13}\Big)\Big)=-523$$ which is squarefree and hence proves the claim.
 \end{proof}
 Since the ideal $(-523)\mathcal{O}_K$ is not contained in the ideal $(5)\mathcal{O}_K$, we get the following result (see \cite[Corollary 2.12]{neukirch}).
 
 \begin{cor} The prime ideal $(5)\mathbb{Z}$ of $\mathbb{Z}$ is unramified in $K$.
 \end{cor}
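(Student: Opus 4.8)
The plan is to deduce this immediately from the classical criterion that a rational prime $p$ ramifies in a number field $K$ if and only if $p$ divides the discriminant $\Delta_K$ of the ring of integers $\mathcal{O}_K$; this is precisely the content of \cite[Corollary 2.12]{neukirch}. So the whole argument reduces to checking a numerical non-divisibility.

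First I would recall from the preceding lemma that $\mathcal{O}_K=\mathbb{Z}\oplus\mathbb{Z}\gamma\oplus\mathbb{Z}\gamma^2\oplus\mathbb{Z}\gamma^3$ and that its discriminant is $\Delta_K=-523$. Next I would observe that $5$ does not divide $523$ in $\mathbb{Z}$ (indeed $523=5\cdot 104+3$). Since $\mathcal{O}_K\cap\mathbb{Q}=\mathbb{Z}$, this non-divisibility is exactly the assertion that the ideal $(-523)\mathcal{O}_K$ is not contained in $(5)\mathcal{O}_K$: any inclusion $(n)\mathcal{O}_K\subseteq (5)\mathcal{O}_K$ with $n\in\mathbb{Z}$ would force $5\mid n$ after intersecting with $\mathbb{Q}$. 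Applying the cited criterion with $p=5$ then yields that $(5)\mathbb{Z}$ is unramified in $K$.

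There is essentially no obstacle here; the only point one might wish to spell out is the translation between the ideal-theoretic phrasing "$(-523)\mathcal{O}_K\not\subseteq(5)\mathcal{O}_K$" and the elementary statement "$5\nmid 523$", which is the elementary remark just made. Everything else is a direct appeal to the discriminant–ramification correspondence.
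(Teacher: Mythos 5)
Your proposal is correct and follows exactly the paper's own argument: it invokes the preceding lemma giving $\Delta_K=-523$, notes that $5\nmid 523$ so that $(-523)\mathcal{O}_K\not\subseteq(5)\mathcal{O}_K$, and applies the discriminant--ramification criterion of \cite[Corollary 2.12]{neukirch}. The only difference is that you spell out the elementary translation between non-divisibility and the ideal containment, which the paper leaves implicit.
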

 
 The prime ideal $(5)\mathbb{Z}$ is unramified in $K$ which means $$(5)\mathcal{O}_K=\mathfrak{p}^{e_1}_1\cdot\ldots\cdot\mathfrak{p}^{e_k}_k$$ with $e_1=\ldots=e_k=1$. Since $\mathcal{O}_F\subseteq\mathcal{O}_K$ holds, the equation $$(5)\mathcal{O}_F\mathcal{O}_K=(5)\mathcal{O}_K$$ implies the following.
 
 \begin{cor} The prime ideal $(5)\mathcal{O}_F$ of $\mathcal{O}_F$ is unramified in $K$.
 \end{cor}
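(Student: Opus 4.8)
The plan is to obtain this as a formal consequence of the previous corollary, which already says that $(5)\mathcal{O}_K$ is a product of distinct prime ideals of $\mathcal{O}_K$, each occurring with multiplicity one; the only extra ingredient is the multiplicativity of ramification indices in the tower $\mathbb{Q}\subseteq F\subseteq K$.

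First I would record the elementary identity $(5)\mathcal{O}_F\,\mathcal{O}_K=(5)\mathcal{O}_K$: since $5\in\mathcal{O}_F\subseteq\mathcal{O}_K$, both sides are just the ideal of $\mathcal{O}_K$ generated by $5$. (In passing, because $13\equiv 3\pmod{5}$ is not a square modulo $5$, the prime $5$ is inert in $F=\mathbb{Q}(\sqrt{13})$, so $(5)\mathcal{O}_F$ is actually a prime ideal of $\mathcal{O}_F$; but this refinement is not needed for the statement.) Next I would write the factorizations $(5)\mathcal{O}_F=\prod_j \mathfrak{p}_j^{a_j}$ in $\mathcal{O}_F$ and $\mathfrak{p}_j\mathcal{O}_K=\prod_i \mathfrak{P}_{ij}^{b_{ij}}$ in $\mathcal{O}_K$, so that extending to $\mathcal{O}_K$ and using the identity above yields $(5)\mathcal{O}_K=\prod_{i,j}\mathfrak{P}_{ij}^{a_j b_{ij}}$. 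Comparing with the previous corollary, every exponent $a_j b_{ij}$ equals $1$, hence every $b_{ij}=1$ (and also every $a_j=1$). Since $K/F$ is a characteristic-zero, hence separable, extension, the residue field extensions are automatically separable, so the vanishing of all the ramification indices $b_{ij}$ is precisely the assertion that each $\mathfrak{p}_j$, and therefore $(5)\mathcal{O}_F$, is unramified in $K$.

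I do not expect any real obstacle here: the statement is purely formal once the squarefreeness of $(5)\mathcal{O}_K$ has been established (which in turn rests on the earlier computation $\Delta_K=-523$ and the fact that $523\neq 5$). The single point deserving a line of care is the transition from ``$(5)\mathcal{O}_K$ squarefree'' to ``each intermediate ramification index is $1$'', which is handled by the tower formula $e(\mathfrak{P}_{ij}/\mathfrak{p}_j)\,e(\mathfrak{p}_j/5\mathbb{Z})=e(\mathfrak{P}_{ij}/5\mathbb{Z})$.
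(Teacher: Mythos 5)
Your proof is correct and follows essentially the same route as the paper: both arguments rest on the identity $(5)\mathcal{O}_F\,\mathcal{O}_K=(5)\mathcal{O}_K$ together with the squarefreeness of $(5)\mathcal{O}_K$ established in the preceding corollary, with your version merely making explicit the tower multiplicativity of ramification indices that the paper leaves implicit. (One wording quibble: you mean the ramification indices $b_{ij}$ all \emph{equal} $1$, not that they ``vanish.'')
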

%
\begin{lem} \label{lem_ideal_5_technical} The ideal $5 \mathcal{O}_K$ is a prime ideal of the ring $\mathcal{O}_K$. 
\end{lem}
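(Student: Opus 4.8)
The plan is to exploit the fact, established just above, that the ring of integers is monogenic: $\mathcal{O}_K = \mathbb{Z}[\gamma]$, where $\gamma$ is a root of the Salem polynomial $S(X) = X^4 - X^3 - X^2 - X + 1$. By Dedekind's theorem on the decomposition of a rational prime in a monogenic ring of integers (see \cite[Chapter I, Proposition 8.3]{neukirch}), which applies here without any index restriction precisely because $\mathcal{O}_K = \mathbb{Z}[\gamma]$, the factorization of $5\mathcal{O}_K$ into prime ideals is read off from the factorization of the reduction $\overline{S} \in \mathbb{F}_5[X]$ into monic irreducible factors; in particular $5\mathcal{O}_K$ is a prime ideal if and only if $\overline{S}$ is irreducible over $\mathbb{F}_5$. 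So everything reduces to an irreducibility check over a finite field.

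First I would exclude linear factors by testing for roots: one computes $\overline{S}(0) = 1$, $\overline{S}(1) = -1$ and $\overline{S}(t) = 3$ for $t \in \{2,3,4\}$ in $\mathbb{F}_5$, so $\overline{S}$ has no root in $\mathbb{F}_5$. It then remains to rule out a factorization into two monic quadratics
\begin{equation*}
\overline{S}(X) = (X^2 + aX + b)(X^2 + cX + d), \qquad a,b,c,d \in \mathbb{F}_5 .
\end{equation*}
Comparing coefficients gives the system $a + c = -1$, $\; b + d + ac = -1$, $\; ad + bc = -1$ and $bd = 1$. Since $bd = 1$ forces $d = b^{-1}$ and $c = -1 - a$, one can solve for $a$ as a function of $b$ and then run through the finitely many values $b \in \mathbb{F}_5^\times$, treating separately the cases $b \in \{1, -1\}$ (where $b = b^{-1}$) and $b \in \{2, 3\}$; in each case the two remaining equations turn out to be incompatible over $\mathbb{F}_5$ (they force a non-square to be a square, or an identity such as $1 = -1$). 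Hence $\overline{S}$ has no quadratic factor either, so it is irreducible, and $5\mathcal{O}_K$ is prime.

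The only slightly delicate point is this finite case analysis for the quadratic factors: it is entirely elementary but has to be run carefully over $\mathbb{F}_5$. If one prefers to avoid it, an equivalent route is to compute $\gcd\bigl(X^{25} - X,\, \overline{S}(X)\bigr)$ in $\mathbb{F}_5[X]$ after reducing $X^{25}$ modulo $\overline{S}$ by repeated squaring: since $\overline{S}$ has no root we already know $\gcd(X^5 - X, \overline{S}) = 1$, and if moreover $\gcd(X^{25} - X, \overline{S}) = 1$ then the degree-$4$ polynomial $\overline{S}$ has no factor of degree $1$ or $2$, hence is irreducible, and again $5\mathcal{O}_K$ is prime.
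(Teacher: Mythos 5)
Your proof is correct, and it takes a genuinely different route from the paper's. The paper verifies primality of $5\mathcal{O}_K$ head-on: it expands the product $uv$ of two general elements written in the integral basis $1,\gamma,\gamma^2,\gamma^3$, reduces the four coordinates modulo $5$, and checks by an exhaustive Mathematica search over $(\mathbb{Z}/5\mathbb{Z})^4\times(\mathbb{Z}/5\mathbb{Z})^4$ that $uv\in 5\mathcal{O}_K$ forces $u$ or $v$ into $5\mathcal{O}_K$. You instead invoke Dedekind's factorization criterion, which applies without restriction here precisely because the preceding lemma gives $\mathcal{O}_K=\mathbb{Z}[\gamma]$ (no conductor obstruction), and you reduce the question to the irreducibility of $\overline S=X^4-X^3-X^2-X+1$ over $\mathbb{F}_5$. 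Your root check is right ($\overline S$ takes the values $1,-1,3,3,3$ at $0,1,2,3,4$), and the quadratic-factor analysis does close up: for $(b,d)=(1,1)$ one needs $a+c=-1$ and $ac=2$, whose discriminant $3$ is a non-square modulo $5$; for $(b,d)=(4,4)$ the linear coefficient forces $4(a+c)=1$ to equal $-1=4$, which fails; and for $(b,d)=(2,3)$ or $(3,2)$ the two linear equations force $a=1$, $c=3$, hence $ac=3\neq 4$. What your approach buys is a short, hand-checkable argument that dispenses with the computer search entirely and, as a bonus, exhibits the residue degree of $5$ in $K$ as $4$ --- which is exactly the conclusion ($f=2$ over $F$) that the paper extracts from this lemma in the following step. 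What the paper's approach buys is independence from Dedekind's theorem, at the cost of a $5^8$-case machine verification. Both arguments rest on the same prerequisite, namely $\mathcal{O}_K=\mathbb{Z}[\gamma]$, obtained from the squarefree discriminant $-523$.
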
 
 
\begin{proof}
 We show that for any two integers $u,v \in \mathcal{O}_K$,  the product $u  v $ belongs to $5\mathcal{O}_K$ if $u$ or $v$ is in the ideal $5 \mathcal{O}_K$. 
Let us write $u ,v $ in the basis given by Proposition \ref{prop_salem_4}.  
\begin{equation*}
u = u_1 + u_2 \gamma  +  u_3 \gamma^2 + u_4 \gamma^3,
\end{equation*}

\begin{equation*}
v = v_1 + \dfrac{\alpha v_2}{2} +  \dfrac{i \sqrt{2 \alpha} v_3}{2} + \dfrac{i v_4  \alpha \sqrt{2 \alpha}  }{4},
\end{equation*}
where $u_i, v_i$ are integers.
The product $u  v$ is then of the form:
\begin{multline*}
u v = u_1 v_1 - u_4 v_2 - u_3 v_3 - u_4 v_3 - u_2 v_4 - u_3 v_4 - 2 u_4 v_4 \\
 +  \gamma (u_2 v_1 + u_1 v_2 + u_4 v_2 + u_3 v_3 + u_2 v_4 + u_4 v_4) \\
 + \gamma^2 (u_3 v_1 + u_2 v_2 + u_4 v_2 + u_1 v_3 + u_3 v_3 + 2 u_4 v_3 + u_2 v_4 + 
    2 u_3 v_4 + 2 u_4 v_4)  \\
 + \gamma^3 (u_4 v_1 + u_3 v_2 + u_4 v_2 + u_2 v_3 + u_3 v_3 + 2 u_4 v_3 + u_1 v_4 + 
    u_2 v_4 + 2 u_3 v_4 + 4 u_4 v_4).
\end{multline*}
One sees that the conditions that $u v \in 5 \mathcal{O}_K$ holds implies that $u_i, v_i$ are solutions of the following system of equations in $\Z/5\Z$:
\begin{equation*}
\left  \{ \begin{array}{llll}
L_1(u,v):= u_1 v_1 - u_4 v_2 - u_3 v_3 - u_4 v_3 - u_2 v_4 - u_3 v_4 - 2 u_4 v_4 = 0, \\
  L_2(u,v):= u_2 v_1 + u_1 v_2 + u_4 v_2 + u_3 v_3 + u_2 v_4 + u_4 v_4= 0, \\
  L_3(u,v):= u_3 v_1 + u_2 v_2 + u_4 v_2 + u_1 v_3 + u_3 v_3 + 2 u_4 v_3 + u_2 v_4 + 
    2 u_3 v_4 + 2 u_4 v_4 =0 , \\
 L_4(u,v):= u_4 v_1 + u_3 v_2 + u_4 v_2 + u_2 v_3 + u_3 v_3 + 2 u_4 v_3 + u_1 v_4 + 
    u_2 v_4 + 2 u_3 v_4 + 4 u_4 v_4 =0 . 
\end{array} \right.
\end{equation*} 
We show that if $u v \in 5 \mathcal{O}_K$, then either  $u_i = 0 \in \Z/5\Z$ for all $i$ or $v_i =0 \in \Z/5\Z$ for all $i$. We check this property by exhausting all the possibilities using computer algebra. 
For each $u, v \in (\Z/5\Z)^4 $, we compute $L_1(u,v), \ldots, L_4(u,v)$ modulo $5$. 
If every $L_i(u,v) $ is zero modulo $5$, we check whether all components of $u$ or $v$ are zero modulo $5$. If this does not happen, then there exist  two integers $u, v$ which do not belong to the ideal $5 \mathcal{O}_K$ whose product belongs to $5 \mathcal{O}_K$ and the ideal is not prime. Otherwise, it is prime.
 In the algorithm, we are reduced by symmetry to the case where $u_1 \leqslant v_1$ holds for a given order in $\Z/5\Z$ and the Mathematica code is given in the appendix below (see \ref{appendix_lem_5}).
\end{proof}


%
%


The ideal $\mathfrak{p}=(5)\mathcal{O}_F$ in $\mathcal{O}_F$ decomposes in $\mathcal{O}_K$ as $$\mathfrak{p}\mathcal{O}_K=\prod_{j=1}^g\mathfrak{P}_j^{e_j}=5\mathcal{O}_K,$$
since $5\mathcal{O}_K$ is a prime ideal. Because of this decomposition and the equation $[K:F]=2=e\cdot f\cdot g$, we get $f=2$ and $e=g=1$. Using the local-global principle for quaternion algebras with the extended $5$-adic valuation, we get the desired divisional quaternion algebra.

\begin{cor} The quaternion algebra $$B_1=\bigg(\frac{5,\,-2-2\sqrt{13}}{\mathbb{Q}(\sqrt{13})}\bigg)$$ with $I^2=5$ and $J^2=-2-2\sqrt{13}$ is a skew field with an order $\mathcal{M}=\mathcal{O}_K\oplus\mathcal{O}_K\cdot I$ and the map $'\colon B_1\longrightarrow B_1$ with $x'=J^{-1}\overline{x}J$ defines a positive anti-involution on $B_1$.
\end{cor}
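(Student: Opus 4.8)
The plan is to read this corollary off Theorem \ref{thm_divisional_salem} and Lemma \ref{involution}, specialized to $F=\mathbb{Q}(\sqrt{13})$, the quadratic generator $a=-2-2\sqrt{13}$ of $K=F(\sqrt{a})=\mathbb{Q}(\gamma)$, and the explicit prime $p=5$; the only substantive step is to certify that $5$ works, i.e. that $B_1$ ramifies at some place above $5$. For this I would first note that the $\mathbb{Q}$-norm of $a$ equals $-2^{4}\cdot 3$ (Proposition \ref{prop_salem_4}), which is prime to $5$, so no place above $5$ divides $(a)$. Since $13$ is a quadratic non-residue modulo $5$, the rational prime $5$ is inert in $F$, so $\mathfrak{p}:=5\mathcal{O}_F$ is prime, $5$ is a uniformizer for the associated valuation $v$, and the residue field $k_v\simeq\mathcal{O}_F/5\mathcal{O}_F$ has characteristic $\neq 2$. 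By Lemma \ref{lem_ideal_5_technical} the ideal $5\mathcal{O}_K$ is prime, hence $\mathfrak{p}$ is inert in $K$ with residue degree $2$; equivalently the completion $K_v=F_v(\sqrt{a})$ is the unramified quadratic extension of $F_v$, so the class of $a$ in $k_v^{\times}/(k_v^{\times})^{2}$ is nontrivial. Proposition \ref{quat_div}, applied with the uniformizer $5$, then gives that $B_1\otimes_F F_v\simeq\big(\tfrac{a,\,5}{F_v}\big)=\big(\tfrac{5,\,a}{F_v}\big)$ is a division algebra, so $v\in{\rm Ram}(B_1)\neq\emptyset$, and by the local--global principle $B_1$ is a skew field. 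It is totally indefinite because $5>0$ under every real embedding of $F$, so $B_1\otimes_{\sigma}\mathbb{R}\simeq{\rm M}_2(\mathbb{R})$ for all $\sigma$.

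Next I would fix the embedding $K\hookrightarrow B_1$ sending $\sqrt{a}$ to $J$ and check, exactly as in Lemma \ref{involution}, that $\mathcal{M}=\mathcal{O}_K\oplus\mathcal{O}_K\cdot I$ is a $\mathbb{Z}$-order. Using $\mathcal{O}_K=\mathbb{Z}[\gamma]$ (which follows from the squarefree discriminant $\Delta_K=-523$), $\mathcal{M}$ is a free $\mathbb{Z}$-module of rank $8=\dim_{\mathbb{Q}}B_1$ spanning $B_1$ over $\mathbb{Q}$; it contains $1$; and it is multiplicatively closed because $I^{2}=5\in\mathcal{O}_F$, because $I\alpha=\overline{\alpha}\,I$ for every $\alpha\in K$ (where $\overline{\,\cdot\,}$ denotes the nontrivial element of ${\rm Gal}(K/F)$, i.e. the restriction of quaternion conjugation, which stabilizes $\mathcal{O}_K$), and hence $(\alpha+\beta I)(\delta+\eta I)=(\alpha\delta+5\beta\overline{\eta})+(\alpha\eta+\beta\overline{\delta})I\in\mathcal{M}$. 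In particular $\gamma\in\mathcal{O}_K\subseteq\mathcal{M}$, with reduced norm $1$, so it is a unit of $\mathcal{M}$ --- this is what will allow Proposition \ref{prop_construction_abelian_variety_quat_algebra} applied to $(B_1,\mathcal{M})$ to produce the sought simple abelian fourfold carrying $\gamma$.

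For the map $x'=J^{-1}\overline{x}J$ I would first verify that it is an anti-involution: quaternion conjugation is an anti-automorphism with $\overline{J}=-J$ and $\overline{J^{-1}}=-J^{-1}$, so $(xy)'=J^{-1}\overline{y}\,\overline{x}\,J=(J^{-1}\overline{y}J)(J^{-1}\overline{x}J)=y'x'$ and $(x')'=J^{-1}\,\overline{J^{-1}\overline{x}J}\,J=J^{-1}(-J)\,x\,(-J^{-1})\,J=x$. Positivity then follows from the same criterion already invoked in Lemma \ref{involution} (see \cite[Chapter 9.4]{birkenhake_lange}): as $B_1$ is totally indefinite over the totally real field $F$ and $J$ is a pure quaternion with $J^{2}=a\in F$, the trace form $x\mapsto{\rm Tr}_{B_1/\mathbb{Q}}(xx')$ is positive definite, so $'$ is a positive anti-involution.

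The only step here that is not a formal transcription of Lemma \ref{involution} is the divisionality, namely producing the inert prime $5$, and this is exactly what Lemma \ref{lem_ideal_5_technical} secures: its proof replaces the $\check{\textnormal{C}}$ebotarev argument of Corollary \ref{cor_chebotarev} by the explicit finite check, modulo $5$, that the multiplication table of $\mathcal{O}_K=\mathbb{Z}[\gamma]$ has no zero divisors. The remaining ingredients --- the order $\mathcal{M}$, the anti-involution and its positivity --- are immediate consequences of the earlier lemmas and present no additional difficulty.
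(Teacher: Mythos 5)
Your route to the divisionality of $B_1$ and to the order $\mathcal{M}$ is the same as the paper's: the substantive input is Lemma \ref{lem_ideal_5_technical} (primality of $5\mathcal{O}_K$, hence residue degree $2$ of $\mathfrak{P}=5\mathcal{O}_K$ over $\mathfrak{p}=5\mathcal{O}_F$ and nontriviality of $a$ in $k_{v}^{\times}/(k_{v}^{\times})^{2}$), fed into Proposition \ref{quat_div} and the local--global principle, and your verification that $\mathcal{M}=\mathcal{O}_K\oplus\mathcal{O}_K I$ is multiplicatively closed and that $x\mapsto J^{-1}\overline{x}J$ is an anti-involution is a correct transcription of Lemma \ref{involution}. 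Those parts are fine.

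There is, however, a genuine gap in the positivity step. The criterion for $x\mapsto c^{-1}\overline{x}c$ to be a \emph{positive} anti-involution on a totally indefinite quaternion algebra over a totally real field is not merely that $c$ be a pure quaternion with $c^{2}\in F$: one needs $c^{2}$ to be \emph{totally negative}. Here $J^{2}=a=-2-2\sqrt{13}$ is negative under the identity embedding but positive under $\sqrt{13}\mapsto-\sqrt{13}$, so the hypothesis fails, and the trace form is in fact indefinite: for $x=\mu J$ with $\mu\in F$ one has $x'=-\mu J$, hence $xx'=-\mu^{2}J^{2}=\mu^{2}(2+2\sqrt{13})$ and, up to a positive factor,
\begin{equation*}
{\rm Tr}_{B_1/\mathbb{Q}}(xx')\ =\ 2\bigl(\mu^{2}(2+2\sqrt{13})+\sigma(\mu)^{2}(2-2\sqrt{13})\bigr),
\end{equation*}
which is negative for $\mu=4-\sqrt{13}$ (since $2-2\sqrt{13}<0$ while $\sigma(\mu)=4+\sqrt{13}$ is large and $\mu$ is small). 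So the displayed $'$ is not positive; note that this cannot be repaired by choosing $a$ differently in this construction, because $K=F(\sqrt{a})\simeq\mathbb{Q}(\lambda)$ for $\lambda$ Salem always has a real embedding, forcing $a$ to be positive at one real place of $F$. The same defect is present in the paper's own Lemma \ref{involution}, so you have faithfully reproduced the paper's argument, but your explicit appeal to the trace-form criterion makes the missing hypothesis visible. What the downstream application (Proposition \ref{prop_construction_abelian_variety_quat_algebra}) actually requires is only the \emph{existence} of some positive anti-involution on $B_1$; this does hold, because weak approximation produces a pure quaternion $c\in B_1$ with $c^{2}\in F$ totally negative, and $x\mapsto c^{-1}\overline{x}c$ is then positive. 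You should either argue existence that way or correct the formula; as written, the positivity claim for $x'=J^{-1}\overline{x}J$ is false.
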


Following the construction of chapter 9.4 in \cite{birkenhake_lange}, we get 

\begin{cor}
There exists a simple abelian variety $X$ of dimension $4$ whose endomorphism ring ${\rm End}(X)$ contains the order $\mathcal{M}$. The element $$\gamma=\dfrac{1 - \sqrt{13} + i \sqrt{2 + 2\sqrt{13}}}{4}$$ lies in $\mathcal{M}$ and defines an automorphism $f$ of $X$ whose analytic eigenvalues are the roots of $X^4-X^3-X^2-X+1$.
\\
The dynamical degrees of $f$ are $\lambda_0(f)=\lambda_4(f)=1$ and  $\lambda_1(f)=\lambda_2(f)=\lambda_3(f)=\lambda^2$.
\end{cor}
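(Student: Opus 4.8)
The plan is to assemble the pieces already laid out in this section rather than to prove anything genuinely new: the last corollary is the conclusion of the worked example in dimension four, so the task is to verify that the hypotheses of Propositions \ref{prop_construction_abelian_variety_quat_algebra} and the eigenvalue computation of Theorem \ref{thm_salem_tot_indef} (i) are met for the explicit data $F=\mathbb{Q}(\sqrt{13})$, $a=-2-2\sqrt{13}$, $p=5$. First I would invoke the preceding corollary, that $B_1=\big(\frac{5,\,-2-2\sqrt{13}}{\mathbb{Q}(\sqrt{13})}\big)$ is a divisional quaternion algebra with the stated order $\mathcal{M}=\mathcal{O}_K\oplus\mathcal{O}_K I$ and the positive anti-involution $x\mapsto J^{-1}\overline{x}J$; since $5>0$ under both real embeddings of $F$, the algebra splits at both archimedean places, i.e. it is totally indefinite. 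Then Proposition \ref{prop_construction_abelian_variety_quat_algebra} with $e=[F:\mathbb{Q}]=2$ produces a simple abelian variety $X$ of dimension $2e=4$ with $\mathcal{M}\subseteq{\rm End}(X)$.

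Next I would check that $\gamma=\tfrac{1-\sqrt{13}+i\sqrt{2+2\sqrt{13}}}{4}$ actually lies in $\mathcal{M}$: by the earlier lemma $\mathcal{O}_K=\mathbb{Z}\oplus\mathbb{Z}\gamma\oplus\mathbb{Z}\gamma^2\oplus\mathbb{Z}\gamma^3$, so $\gamma\in\mathcal{O}_K\subseteq\mathcal{M}$ trivially, and $\gamma$ is an algebraic integer with minimal polynomial $S(X)=X^4-X^3-X^2-X+1$, hence ${\rm N}_{K/\mathbb{Q}}(\gamma)=S(0)=1$, so its reduced norm over $F$ is a unit and $\gamma$ is an \emph{automorphism} of $X$. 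The analytic eigenvalues of $\gamma$ are then computed exactly as in the proof of Theorem \ref{thm_salem_tot_indef} (i): comparing the two fixed-point formulas $\#{\rm Fix}(\gamma-s+1)={\rm N}_{B_1/\mathbb{Q}}(s-\gamma)^{2}=\prod_{l=1}^4(s-\rho_l)(s-\overline{\rho_l})$ against the factorization of ${\rm N}_{K/\mathbb{Q}}$ of the reduced characteristic polynomial of $\gamma$ over $F$, one finds the $\rho_l$ must be the four roots of $S$, namely $\lambda,\lambda^{-1},\gamma,\overline{\gamma}$.

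Finally, the dynamical degrees follow from the recipe recalled in \S1.3: $\lambda_k(f)$ is the largest product of $2k$ pairwise distinct rational eigenvalues among $\{\lambda,\lambda^{-1},\gamma,\overline{\gamma},\overline{\lambda},\ldots\}$ — but here the rational eigenvalues are precisely $\lambda,\lambda^{-1},\gamma,\overline{\gamma}$ (each of the latter two having modulus $1$), so $\lambda_0=|\lambda\cdot\lambda^{-1}\cdot\gamma\cdot\overline\gamma|^{0}=1$, $\lambda_1=\lambda\cdot\overline{\lambda^{-1}}$? — more carefully, $\lambda_1$ is the largest modulus of a product of two eigenvalues, which is $\lambda\cdot 1=\lambda$ if we only took two; but the action on $\mathrm{H}^1$ has eigenvalues $\lambda,\lambda^{-1},\gamma,\overline\gamma$ and their conjugates from $\overline{\rho_a}$, so on $\mathrm{H}^{k,k}$ one gets products of $2k$ of the eight numbers $\lambda,\lambda^{-1},\gamma,\overline\gamma$ (twice), yielding $\lambda_1=\lambda^2$, $\lambda_2=\lambda^2$, $\lambda_3=\lambda^2$, $\lambda_4=(\lambda\lambda^{-1})^2(\gamma\overline\gamma)^2=1$, matching Theorem \ref{thm_salem_tot_indef} (i) with $g=4$. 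The only genuine obstacle in this whole argument is the one already dispatched by Lemma \ref{lem_ideal_5_technical} — verifying that $5\mathcal{O}_K$ is prime, equivalently that $5$ is inert in $K$ so that the residue degree $f=2$ and the local algebra at the extended $5$-adic valuation is ramified; everything else is bookkeeping with norms and the eigenvalue dictionary.
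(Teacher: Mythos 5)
Your proposal is correct and follows essentially the same route as the paper: the corollary is obtained by assembling the preceding results of that subsection (primality of $5\mathcal{O}_K$, hence $B_1$ divisional and totally indefinite, the order $\mathcal{M}$ with its positive anti-involution, and the Birkenhake--Lange construction of Proposition \ref{prop_construction_abelian_variety_quat_algebra}), then running the fixed-point-formula eigenvalue computation from the proof of Theorem \ref{thm_salem_tot_indef}(i) for $\gamma$. The brief hesitation in your dynamical-degree paragraph resolves correctly once you use all eight rational eigenvalues $\lambda,\lambda^{-1},\gamma,\bar\gamma$ (each twice), so no gap remains.
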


\section{Classification of automorphisms on low dimensional simple abelian varieties}\label{dim4}

\subsection{Lefschetz fixed-point formula}

To discuss the possible dynamical degrees of automorphisms on simple abelian varieties of dimension $1$ to $4$, we need precise insight in the possible eigenvalues of the analytic representation. Therefore, we again use the two fixed-point formulas as in the proof of Theorem \ref{thm_salem_tot_indef}. In this way, we will get information about the analytic eigenvalues by analyzing the algebraic structure of the possible endomorphism algebras. 
In the following, $g$ will be the dimension of the abelian variety $X$.
\\
First, we consider the case that $B:={\rm End}_\mathbb{Q}(X)$ is a field with $[B:\mathbb{Q}]=e$. For an endomorphism $f\in {\rm End}(X)$, we define $l:=[\mathbb{Q}(f):\mathbb{Q}]$ and $m:=[B:\mathbb{Q}(f)]$. We can compute the eigenvalues $\rho_1,\ldots,\rho_g,\overline{\rho}_1,\ldots,\overline{\rho}_g$ by \begin{equation}\label{FP1}\prod_{i=1}^g(X-\rho_i)(X-\overline{\rho}_i)=\#{\rm Fix}(f-X+1)=(\big(\prod_{j=1}^lX-\sigma_j(f)\big)^m)^{2g/e}.\end{equation}
\\
Secondly, we treat the case that $B:=\big(\frac{\alpha,\, \beta}{F}\big)$ is a quaternion algebra over the center $F$. If $B$ is a totally definite or indefinite quaternion algebra, then $F$ is totally real. If the abelian variety is of the second kind, then $F$ is a CM-field. An endomorphism $f\in{\rm End}(X)$ is of the form $a+bi+cj+dij$ with $a,b,c,d\in F$ and $i^2=\alpha, j^2=\beta$. As in the end of the proof of Theorem \ref{thm_salem_tot_indef}, the map $f$ corresponds to the numbers $t_{1/2}=a\pm\sqrt{b^2\alpha+c^2\beta-d^2\alpha\beta}$ and $F(t_{1/2})$ defines a quadratic extension of $F$. We define $e:=[F:\mathbb{Q}]$, $l:=[\mathbb{Q}(t_{1/2}):\mathbb{Q}]$ and $m:=[F(t_{1/2}):\mathbb{Q}(t_{1/2})]$, while $l \cdot m=2e$ holds. As in the proof of Theorem \ref{thm_salem_tot_indef}, the eigenvalues $\rho_1,\ldots,\rho_g,\overline{\rho}_1,\ldots,\overline{\rho}_g$ can be computed by \begin{equation}\label{FP2}\prod_{i=1}^g(X-\rho_i)(X-\overline{\rho}_i)=\#{\rm Fix}(f-X+1)=(\big(\prod_{j=1}^lX-\sigma_j(t_1)\big)^m\big(\prod_{j=1}^lX-\sigma_j(t_2)\big)^m)^{g/2e}.\end{equation}
\\
We briefly remember how to compute the dynamical degrees if the analytic eigenvalues are known:

We denote the eigenvalues of $\rho_r(f)\simeq\rho_a(f)\oplus\overline{\rho_a(f)}$ by $\rho_1,\ldots,\rho_g,\rho_{g+1}=\overline{\rho_1},\ldots,\rho_{2g}=\overline{\rho_g}$. The action $f^*$ of $f$ on ${\rm H}^1(X,\mathbb{C})$ is given by $^t\rho_a(f)\oplus\,^t\overline{\rho_a(f)}$ and ${\rm H}^n(X,\mathbb{C})=\wedge^n{\rm H}^1(X,\mathbb{C})$ holds. Then, the $k$-th dynamical degree $\lambda_k(f)$ is the product of the largest $2k$ pairwise distinct (in terms of the indices) eigenvalues $\rho_i$.
\\
The automorphisms of simple abelian varieties with trivial multiplication are exactly $\pm 1$ whose dynamical degrees $\lambda_i$ are always $1$. In the following statements, we concentrate on the non-trivial automorphisms. 

\subsection{Automorphisms of elliptic curves}

\begin{figure}[!ht]
\begin{itemize}
\centering
\item[(1)] \begin{tikzpicture}
[scale=.9, transform shape] \tikzstyle{every node} = [circle, fill=gray!30] \node (a) at (0, 0) {1};
\node (b) at +(180: 1) {0};
\foreach \from/\to in { a/b}
\draw [-] (\from) -- (\to);
\end{tikzpicture} $\quad\lambda_0=\lambda_1=1$
\caption{Dimension 1}
\end{itemize}
\end{figure}


\begin{table}[!ht]
\centering
\begin{tabular}{|c | c | c | c | c|}
\hline
\bf{multiplication} &  $\mathbb{Q}(f)$ & $[\mathbb{Q}(f):\mathbb{Q}]$ & diagram & Properties of $\lambda_j(f)$ \\
\hline 
trivial & $\mathbb{Q}$ & 1 & (1) & $1\in\mathbb{Z}$ \\
\hline
complex & CM-field & 2 & (1) & $1\in\mathbb{Z}$ \\
\hline
\end{tabular}
\caption{Dimension 1}
\end{table}

We have to determine the analytic eigenvalues that can be found on the left hand side of the equation \ref{FP1}, i.e $$(X-\rho_1)(X-\overline{\rho_1}).$$
\\
{\bf{Complex Multiplication:}}
 Let $f$ be a non-trivial automorphism on an elliptic curve $E$, i.e. ${\rm End}_\mathbb{Q}(E)$ is an imaginary quadratic number field and $f\in{\rm End}(E)\subseteq{\rm End}_\mathbb{Q}(E)$. The formula \ref{FP1} becomes in this setting $$(X-\rho_1)(X-\overline{\rho}_1)=(X-f)(X-\overline{f})=X^2-X(f+\overline{f})+1.$$
Because of $f\cdot\overline{f}=1$, the dynamical degrees are $\lambda_0=\lambda_1=1$ as claimed.

\begin{ex}\label{ellcurvex}
The polynomial $X^2+X+1$ is the minimal one of the algebraic integers $\frac{1\pm\sqrt{-3}}{2}$ of modulus $1$. The elliptic curve $\mathbb{C}/\Lambda$ with $\Lambda=\mathbb{Z}\oplus\frac{1+\sqrt{-3}}{2}\mathbb{Z}$, embedded in $\mathbb{P}_\mathbb{C}^2$ by the Weierstrass equation $x^3=y^2z+z^2y$, admits the automorphisms $\frac{1\pm\sqrt{-3}}{2}$.
\end{ex}

\subsection{Automorphisms of abelian surfaces} \label{section_surfaces}

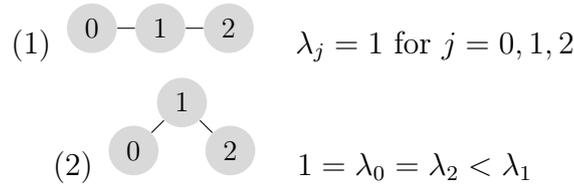
\begin{figure}[!h]
\begin{itemize}\label{surf}\centering
\item[(1)] \begin{tikzpicture}
[scale=.9, transform shape] \tikzstyle{every node} = [circle, fill=gray!30] \node (a) at (0, 0) {1};
\node (b) at +(180: 1) {0};
\node (c) at +(0: 1) {2};
\foreach \from/\to in { a/b, a/c}
\draw [-] (\from) -- (\to);
\end{tikzpicture} $\quad\lambda_j=1$ for $j=0,1,2$
\item[(2)] \begin{tikzpicture}
[scale=.9, transform shape] \tikzstyle{every node} = [circle, fill=gray!30] \node (a) at (0, 0) {1};
\node (b) at +(225: 1) {0};
\node (c) at +(315: 1) {2};
\foreach \from/\to in { a/b, a/c}
\draw [-] (\from) -- (\to);
\end{tikzpicture} $\quad1=\lambda_0=\lambda_2<\lambda_1$
\end{itemize}
\caption{Dimension 2}
\end{figure}

\bigskip 


\begin{table}[!ht]
\centering
\begin{tabular}{|c | c | c | c | c|}
\hline
 \bf{multiplication} &  $\mathbb{Q}(f)$ & $[\mathbb{Q}(f):\mathbb{Q}]$ & diagram & Properties of $\lambda_1(f)$\\
 \hline
 trivial & $\mathbb{Q}$ & 1 & (1) & $1\in\mathbb{Z}$ \\
 \hline
 real & totally real & 2 & (2) & constructible, Pisot \\
 & & & & deg. $2$ , tot. real \\
 \hline
 tot. indef. quaternion & totally real & 2 & (2) & constructible, Pisot \\
 & & & & deg. $2$, tot. real \\
 \hline
 - & CM-field & 2 & (1) & $1\in\mathbb{Z}$ \\
 \hline
 complex & totally real & 2 & (2) & constructible, Pisot,\\
 & & & & deg. $2$, tot. real \\
 \hline
-  & CM-field & 2 & (1) & $1\in\mathbb{Z}$ \\
  \hline
  & CM-field & 4 & (1), (2) & $1\in\mathbb{Z}$ or \\
-  & & & & constructible, Pisot,\\
  & & & & deg. $2.$, tot. real\\
  \hline
\end{tabular}
\caption{Dimension 2}
\end{table}

We use the equations \ref{FP1} and \ref{FP2} to determine the eigenvalues $\rho_i$ and further the dynamical degrees $\lambda_j$. The left hand side of the equations in dimension $2$ is $$\prod_{i=1}^2(X-\rho_i)(X-\overline{\rho}_i),$$ while the right hand side varies depending on the automorphism $f$. We prove the statements of table 2 by distinguishing the cases in the first column connected with the ones in the following two columns.
\\
{\bf{Real multiplication:}}
For the parameters in equation \ref{FP1}, we get $m=1$ and $g=e=l=2$, i.e. $$\prod_{i=1}^2(X-\rho_i)(X-\overline{\rho}_i)=\big(\prod_{j=1}^2X-\sigma_j(f)\big)^2.$$
This implies $\rho_1=\rho_3, \rho_2=\rho_4$ with $|\rho_1|>|\rho_2|$ and $\rho_1\cdot\rho_2=1$ which leads to the claim in table 2.
\\
\\
{\bf{Totally indefinite quaternion multiplication:}}
The parameters in equation \ref{FP2} become $e=m=1$ and $g=l=2$, i.e. $$\prod_{j=1}^2X-\sigma_j(t_1)\prod_{j=1}^2X-\sigma_j(t_2).$$
If $\mathbb{Q}(\sqrt{t_{1/2}})$ defines a real quadratic extension, then we get the same result as for real multiplication.
\\
If $\mathbb{Q}(\sqrt{t_{1/2}})$ defines an imaginary quadratic extension, then we get $\rho_2=\overline{\rho}_1=\rho_3$ and $\rho_4=\rho_1$ with $|\rho_1|=1$ and the dynamical degrees become as claimed.
\\
\\
{\bf{Complex multiplication:}}
First, the parameters $g=e=l=2$ and $m=1$ can occur, i.e. $$(\prod_{j=1}^2X-\sigma_j(f)\big)^2.$$
Since $\mathbb{Q}(f)$ must be an imaginary quadratic field, we get $\rho_2=\overline{\rho}_1=\rho_3$ and $\rho_4=\rho_1$ with $|\rho_1|=1$ which finishes this case.
\\
Secondly, the parameters can be $e=4$, $g=2$. 
\\For $l=m=2$, we are either in the previous case or in the same situation as in real multiplication.
\\For $m=1$ and $l=4$ we have $$\prod_{j=1}^4X-\sigma_j(f)$$ which offers two possibilities. First, one gets $\rho_1,\ldots,\rho_4$ that are all of absolute value $1$ which leads to diagram (1). Secondly, we have $\rho_3=\overline{\rho}_1, \rho_4=\overline{\rho}_2$ with $1\neq|\rho_1|=|\rho_2|^{-1}$ which implies diagram (2).

\subsection{Automorphisms of abelian threefolds}
\label{section_threefold}
\begin{figure}[!ht]
\begin{itemize}
\centering
\item[(1)] \begin{tikzpicture}
[scale=.9, transform shape] \tikzstyle{every node} = [circle, fill=gray!30] \node (a) at (0, 0) {2};
\node (b) at +(180: 1) {1};
\node (c) at +(180: 2) {0};
\node (d) at +(0: 1) {3};
\foreach \from/\to in { a/b, b/c, a/d}
\draw [-] (\from) -- (\to);
\end{tikzpicture} $\quad\lambda_j=1$ for $j=0,\ldots ,4$
\item[(2)] \begin{tikzpicture}
[scale=.9, transform shape] \tikzstyle{every node} = [circle, fill=gray!30] \node (a) at (0, 0) {2};
\node (b) at +(210: 1) {1};
\node (c) at +(230: 2) {0};
\node (d) at +(310: 2) {3};
\foreach \from/\to in { a/b, b/c, a/d}
\draw [-] (\from) -- (\to);
\end{tikzpicture} $\quad\lambda_1<\lambda_2<\lambda_1^2$
\item[(3)] \begin{tikzpicture}
[scale=.9, transform shape] \tikzstyle{every node} = [circle, fill=gray!30] \node (a) at (0, 0) {1};
\node (b) at +(230: 2) {0};
\node (c) at +(330: 1) {2};
\node (d) at +(310: 2) {3};
\foreach \from/\to in { a/b, a/c, c/d}
\draw [-] (\from) -- (\to);
\end{tikzpicture} $\quad\lambda_2^2>\lambda_1>\lambda_2$
\end{itemize}
\caption{Dimension 3}
\end{figure}
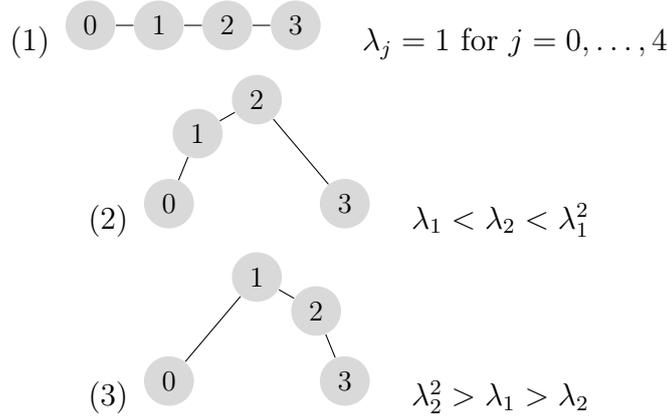

\begin{table}[!ht]
\centering
\begin{tabular}{|c | c | c | c | c|}
\hline
 \bf{multiplication} &  $\mathbb{Q}(f)$ & $[\mathbb{Q}(f):\mathbb{Q}]$ & diagram & Properties of $\lambda_1(f)$, $\lambda_2(f)$\\
 \hline
 trivial & $\mathbb{Q}$ & 1 & (1) & $1\in\mathbb{Z}$\\
 \hline
 real & totally real & 3 & (2), (3) & deg. $3$, tot. real \\
 \hline
 complex & totally real & 3 & (2), (3) & deg. $3$, tot. real \\
 \hline
-  & CM-field & 2 & (1) & $1\in\mathbb{Z}$\\
  \hline
-  & CM-field & 6 & (1), (2), (3) & $1\in\mathbb{Z}$ or \\
  & & & & deg. $3$, tot. real\\
  \hline
\end{tabular}
\caption{Dimension 3}
\end{table}

In this setting, the left hand side of the equations \ref{FP1} and \ref{FP2} becomes $$\prod_{i=1}^3(X-\rho_i)(X-\overline{\rho}_i).$$
\\
{\bf{Real multiplication:}} The possible parameters in equation \ref{FP1} are $m=1$ and $g=e=l=3$, i.e. $$P_f(X)=\big(\prod_{j=1}^3X-\sigma_j(f)\big)^2.$$ 
Therefore, we get the eigenvalues $\rho_{i+3}=\rho_i$ for $i=1,2,3$. The polynomial $X^3-5X+1$ has two roots greater than one and the polynomial $X^3-3X^2-2X+1$ has two roots smaller than one. These two polynomials are examples of the two possible distributions of eigenvalues and lead to the diagrams (2) and (3).
\\
\\
{\bf{Complex multiplication:}} First, the parameters can be $g=3$, $e=l=2$ and $m=1$, i.e. $$\big(\prod_{j=1}^2X-\sigma_j(f)\big)^3.$$ Here, $f$ has to lie in an imaginary quadratic field, such that $\rho_5=\rho_3=\rho_1$ and $\rho_6=\rho_4=\rho_2=\overline{\rho}_1$ which implies diagram (1).
\\
Secondly, the parameters can be $g=3$ and $e=6$. The first sub-case is  $m=3$ and $l=2$ which gives the same situation as in the first case. The second one is $l=3$ and $m=2$ which is the same as in real multiplication. The last sub-case, $l=6$ and $m=1$, defines a new situation, i.e. $$\prod_{j=1}^6X-\sigma_j(f)$$ with $\overline{\rho}_{i+1}=\rho_i$ for $i=1,2,3$.
\\
The polynomial $$X^6+X^4+X^3+2X^2+1$$ has roots with $|\rho_1|>|\rho_2|>1>|\rho_3|$ which implies diagram (2).
The roots of the polynomial $$X^6+X^4+X^3+1$$ have the properties $|\rho_1|>1>|\rho_2|>|\rho_3|$ and lead to diagram (3).

\begin{cor} The first dynamical degree $\lambda_1$ in diagram (3) in the case of real multiplication is a Pisot number. 
\end{cor}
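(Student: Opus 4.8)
The plan is to read off an explicit formula for $\lambda_1(f)$ in the situation of diagram (3) from the analysis of this section and then to verify the three defining properties of a Pisot number by hand. Recall that in the real multiplication case, equation (\ref{FP1}) shows that the rational eigenvalues $\rho_1,\ldots,\rho_6$ are the three Galois conjugates $\mu_1:=\sigma_1(f)$, $\mu_2:=\sigma_2(f)$, $\mu_3:=\sigma_3(f)$ of $f$, each occurring with multiplicity two. Since $f$ is an automorphism, ${\rm N}_{\mathbb{Q}(f)/\mathbb{Q}}(f)=\mu_1\mu_2\mu_3=\pm 1$, so none of the $\mu_i$ vanishes. I would first observe that the $|\mu_i|$ are pairwise distinct and different from $1$: an irreducible cubic cannot have two roots of equal absolute value, since it would otherwise have a rational factor $X^2-c$ and be reducible, and $\mu_i$ is irrational. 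Hence we may order the conjugates so that $|\mu_1|>|\mu_2|>|\mu_3|>0$.

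Next I would identify which eigenvalue configuration corresponds to diagram (3). Since $\lambda_k(f)$ is the product of the $2k$ rational eigenvalues of largest modulus, we get $\lambda_1(f)=\mu_1^2$ and $\lambda_2(f)=(\mu_1\mu_2)^2=\mu_3^{-2}$, the last equality by $\mu_1\mu_2\mu_3=\pm 1$. The inequality $\lambda_2(f)<\lambda_1(f)$ recorded in diagram (3) then reads $\mu_2^2<1$, i.e. $|\mu_2|<1$, so in diagram (3) we are precisely in the case $|\mu_1|>1>|\mu_2|>|\mu_3|>0$, where $f$ has a single conjugate outside the closed unit disc. (The remaining inequality $\lambda_1(f)<\lambda_2(f)^2$ of diagram (3) is then automatic, since $\mu_1^2\mu_2^4=\mu_2^2/\mu_3^2>1$.)

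The final step is to check that $\lambda_1(f)=\mu_1^2$ is Pisot. As an eigenvalue of $f\in{\rm End}(X)$, the number $\mu_1$ is an algebraic integer, hence so is $\mu_1^2$, and $\mu_1^2=|\mu_1|^2>1$. Since $[\mathbb{Q}(\mu_1):\mathbb{Q}]=3$ is prime and $\mu_1^2\notin\mathbb{Q}$, we have $\mathbb{Q}(\mu_1^2)=\mathbb{Q}(\mu_1)$, so the conjugates of $\mu_1^2$ are its images under the three embeddings of $\mathbb{Q}(\mu_1)$ into $\mathbb{C}$, namely $\mu_1^2,\mu_2^2,\mu_3^2$. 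The latter two have absolute values $|\mu_2|^2$ and $|\mu_3|^2$, both strictly below $1$. Thus $\mu_1^2$ is a real algebraic integer greater than $1$ all of whose other conjugates lie strictly inside the unit circle, i.e. a Pisot number.

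I do not foresee a real obstacle here: once equation (\ref{FP1}) is available the argument is elementary, and the only point needing slight care is the claim that diagram (3) genuinely corresponds to the configuration $|\mu_1|>1>|\mu_2|>|\mu_3|$ rather than a degenerate one, which the observations about irreducible cubics are designed to handle.
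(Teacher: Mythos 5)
Your proof is correct and follows essentially the same route as the paper's: diagram (3) in the real-multiplication case forces the eigenvalue configuration $|\mu_1|>1>|\mu_2|>|\mu_3|$, and $\lambda_1=\mu_1^2$ is then Pisot. You merely supply the details the paper leaves implicit, namely the verification that diagram (3) genuinely corresponds to this configuration and the direct check that $\mu_1^2$ is Pisot instead of invoking that the square of a Pisot number is again Pisot.
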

\begin{proof} In real multiplication, diagram (3) occurs if one eigenvalue $\rho_1$ is a Pisot number. The first dynamical degree $\lambda_1$ is then given by $\rho_1^2$. The square of a Pisot number is again a Pisot number such that the corollary is proven.
\end{proof}

\subsection{Automorphisms of abelian fourfolds}
\label{section_fourfold}

\begin{figure}
\begin{itemize}
\centering
\item[(1)] \begin{tikzpicture}
[scale=.9, transform shape] \tikzstyle{every node} = [circle, fill=gray!30] \node (a) at (0, 0) {2};
\node (b) at +(180: 1) {1};
\node (c) at +(180: 2) {0};
\node (d) at +(0: 1) {3};
\node (e) at +(0: 2) {4};
\foreach \from/\to in { a/b, b/c, a/d, d/e}
\draw [-] (\from) -- (\to);
\end{tikzpicture} $\quad\lambda_j=1$ for $j=0,\ldots ,4$
\item[(2)] \begin{tikzpicture}
[scale=.9, transform shape] \tikzstyle{every node} = [circle, fill=gray!30] \node (a) at (0, 0) {2};
\node (b) at +(225: 1) {1};
\node (c) at +(225: 2) {0};
\node (d) at +(315: 1) {3};
\node (e) at +(315: 2) {4};
\foreach \from/\to in { a/b, b/c, a/d, d/e}
\draw [-] (\from) -- (\to); 
\end{tikzpicture} $\quad1<\lambda_1^2=\lambda_2=\lambda_3^2$

\item[(3)] \begin{tikzpicture}
[scale=.9, transform shape] \tikzstyle{every node} = [circle, fill=gray!30] \node (a) at (0, 0) {2};
\node (b) at +(210: 1) {1};
\node (c) at +(225: 2) {0};
\node (d) at +(345: 1) {3};
\node (e) at +(315: 2) {4};
\foreach \from/\to in { a/b, b/c, a/d, d/e}
\draw [-] (\from) -- (\to); 
\end{tikzpicture} $\quad\lambda_1<\lambda_2>\lambda_3,\, \lambda_1^2>\lambda_2<\lambda_3^2,\, \lambda_1<\lambda_3$

\item[(4)] \begin{tikzpicture}
[scale=.9, transform shape] \tikzstyle{every node} = [circle, fill=gray!30] \node (a) at (0, 0) {2};
\node (b) at +(210: 1) {1};
\node (c) at +(225: 2) {0};
\node (d) at +(330: 1) {3};
\node (e) at +(315: 2) {4};
\foreach \from/\to in { a/b, b/c, a/d, d/e}
\draw [-] (\from) -- (\to); 
\end{tikzpicture} $\quad\lambda_1<\lambda_2>\lambda_3,\, \lambda_1^2>\lambda_2<\lambda_3^2,\, \lambda_1=\lambda_3$

\item[(5)] \begin{tikzpicture}
[scale=.9, transform shape] \tikzstyle{every node} = [circle, fill=gray!30] \node (a) at (0, 0) {2};
\node (b) at +(195: 1) {1};
\node (c) at +(225: 2) {0};
\node (d) at +(330: 1) {3};
\node (e) at +(315: 2) {4};
\foreach \from/\to in { a/b, b/c, a/d, d/e}
\draw [-] (\from) -- (\to); 
\end{tikzpicture} $\quad\lambda_1<\lambda_2>\lambda_3,\, \lambda_1^2>\lambda_2<\lambda_3^2,\, \lambda_1>\lambda_3$

\item[(6)] \begin{tikzpicture}
[scale=.9, transform shape] \tikzstyle{every node} = [circle, fill=gray!30] \node (a) at (0, 0) {2};
\node (b) at +(215: 1) {1};
\node (c) at +(225: 2) {0};
\node (d) at +(15: 1) {3};
\node (e) at +(315: 2) {4};
\foreach \from/\to in { a/b, b/c, a/d, d/e}
\draw [-] (\from) -- (\to); 
\end{tikzpicture} $\quad\lambda_1<\lambda_2<\lambda_3,\, \lambda_1^2>\lambda_2,\, \lambda_2^2>\lambda_3$

\item[(7)] \begin{tikzpicture}
[scale=.9, transform shape] \tikzstyle{every node} = [circle, fill=gray!30] \node (a) at (0, 0) {2};
\node (b) at +(165: 1) {1};
\node (c) at +(225: 2) {0};
\node (d) at +(325: 1) {3};
\node (e) at +(315: 2) {4};
\foreach \from/\to in { a/b, b/c, a/d, d/e}
\draw [-] (\from) -- (\to); 
\end{tikzpicture} $\quad\lambda_1>\lambda_2>\lambda_3,\, \lambda_1<\lambda_2^2,\, \lambda_2<\lambda_3^2$

\item[(8)] \begin{tikzpicture}
[scale=.9, transform shape] \tikzstyle{every node} = [circle, fill=gray!30] \node (a) at (0, 0) {2};
\node (b) at +(225: 1) {1};
\node (c) at +(225: 2) {0};
\node (d) at +(15: 1) {3};
\node (e) at +(315: 2) {4};
\foreach \from/\to in { a/b, b/c, a/d, d/e}
\draw [-] (\from) -- (\to); 
\end{tikzpicture} $\quad\lambda_1<\lambda_2<\lambda_3,\, \lambda_1^2=\lambda_2,\, \lambda_2^2>\lambda_3$

\item[(9)] \begin{tikzpicture}
[scale=.9, transform shape] \tikzstyle{every node} = [circle, fill=gray!30] \node (a) at (0, 0) {2};
\node (b) at +(165: 1) {1};
\node (c) at +(225: 2) {0};
\node (d) at +(315: 1) {3};
\node (e) at +(315: 2) {4};
\foreach \from/\to in { a/b, b/c, a/d, d/e}
\draw [-] (\from) -- (\to); 
\end{tikzpicture} $\quad\lambda_1>\lambda_2>\lambda_3,\, \lambda_1<\lambda_2^2,\, \lambda_2=\lambda_3^2$

\item[(10)] \begin{tikzpicture}
[scale=.9, transform shape] \tikzstyle{every node} = [circle, fill=gray!30] \node (a) at (0, 0) {2};
\node (b) at +(225: 1) {1};
\node (c) at +(225: 2) {0};
\node (d) at +(335: 1) {3};
\node (e) at +(315: 2) {4};
\foreach \from/\to in { a/b, b/c, a/d, d/e}
\draw [-] (\from) -- (\to); 
\end{tikzpicture} $\quad\lambda_1<\lambda_2>\lambda_3,\, \lambda_1^2=\lambda_2<\lambda_3^2,\, \lambda_1<\lambda_3$

\item[(11)] \begin{tikzpicture}
[scale=.9, transform shape] \tikzstyle{every node} = [circle, fill=gray!30] \node (a) at (0, 0) {2};
\node (b) at +(205: 1) {1};
\node (c) at +(225: 2) {0};
\node (d) at +(315: 1) {3};
\node (e) at +(315: 2) {4};
\foreach \from/\to in { a/b, b/c, a/d, d/e}
\draw [-] (\from) -- (\to); 
\end{tikzpicture} $\quad\lambda_1<\lambda_2>\lambda_3,\, \lambda_1^2>\lambda_2=\lambda_3^2,\, \lambda_1>\lambda_3$
\end{itemize}
\end{figure}
\begin{figure}
\begin{itemize}\centering
\item[(12)] \begin{tikzpicture}
[scale=.9, transform shape] \tikzstyle{every node} = [circle, fill=gray!30] \node (a) at (0, 0) {2};
\node (b) at +(180: 1) {1};
\node (c) at +(225: 2) {0};
\node (d) at +(0: 1) {3};
\node (e) at +(315: 2) {4};
\foreach \from/\to in { a/b, b/c, a/d, d/e}
\draw [-] (\from) -- (\to); 
\end{tikzpicture} $\quad1<\lambda_1=\lambda_2=\lambda_3$
\end{itemize}
\caption{Dimension 4}
\end{figure}
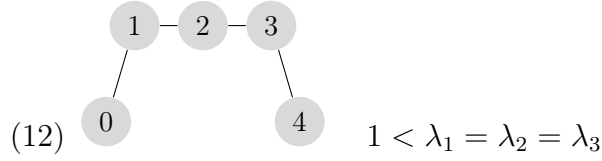

\begin{table}[ht]
\centering
\begin{tabular}{|c | c | c | c|c|}
\hline
 \bf{multiplication} &  $\mathbb{Q}(f)$ & $[\mathbb{Q}(f):\mathbb{Q}]$ & diagram & Properties of $\lambda_1(f)$ \\
 \hline
 trivial & $\mathbb{Q}$ & 1 & (1)& $\lambda_1(f)=1$ \\
 \hline
 real & totally real & 2 & (2)& Constructible \\
 \hline
- & totally real & 4 & (2) - (7) &  deg. $ \leqslant 4$ \\
 \hline
 tot. def. quaternion & totally real & 2 & (2)& Constructible \\
 \hline
- & CM-field & 2 & (1) & $\lambda_1(f)=1$ \\
 \hline
- & CM-field & 4 & (1), (2) & Constructible \\
 \hline
 tot. indef. quaternion & totally real & 2 & (2) & Constructible \\
 \hline
 -& totally real & 4 & (2) - (7) & deg. $\leqslant 4$ \\
 \hline
 -& CM-field & 2 & (1) & Constructible \\
 \hline
 -& CM-field & 4 & (1), (2) & Constructible \\
 \hline
- & real and complex & 4 & (6) - (12),& deg. $ \leqslant 4$ \\
 & embeddings & & &\\
 \hline
 second kind & totally real & 2 & (2) & Constructible \\
 \hline
-  & CM-field & 2 & (1) & $\lambda_1(f)=1$ \\
  \hline
-  & CM-field & 4 & (1), (2) & Constructible \\
  \hline
-  & totally real & 4 & (2) - (7) &  deg. $ \leqslant 4$ \\
 \hline
-  & CM-field & 8 & at most (2) - (7)&  deg. $ \leqslant 8$\\
  \hline
-  & quad. ext. of  & 4 & (1), (2) & Constructible \\
  & CM-field of deg. $2$ & &  &\\
  \hline
\end{tabular}
\caption{Dimension 4}
\end{table}
  
We use the equations \ref{FP1} and \ref{FP2} as in the previous subsections, while the left hand side in this case is always $$\prod_{i=1}^4(X-\rho_i)(X-\overline{\rho}_i).$$

{\bf{Real multiplication:}} First, the parameters of the equation \ref{FP1} can be $g=4$, $e=l=2$ and $m=1$, i.e. $$\big(\prod_{j=1}^2X-\sigma_j(f)\big)^4.$$
This gives us $\rho_1\neq\rho_2$ and $\rho_i=\rho_{i+2}$ which determines diagram (2).
\\
Secondly, the variables become $g=e=4$. 
\\
The first sub-case $l=m=2$ implies the same result as in the first case.
\\
The second sub-case $l=4$ and $m=1$, i.e. $$\big(\prod_{j=1}^4X-\sigma_j(f)\big)^2=P(X)^2,$$ provides the following possibilities. We list examples of all possible distributions of roots of the polynomial $P(X)$:
\begin{itemize}
\item $X^4-5X^2+1$ with $|\rho_1|=|\rho_2|>1>|\rho_3|=|\rho_4|=|\rho_1|^{-1}$ gives diagram (2).
\item $X^4+X^3-5X^2+X+1$ with the relations $|\rho_1|>|\rho_2|>1>|\rho_3|>|\rho_4|$ and $|\rho_2\cdot\rho_3|>1$ gives diagram (3).
\item $X^4+X^3-5X^2+X+1$ has roots with the relations $|\rho_1|>|\rho_2|>1>|\rho_3|>|\rho_4|$ with $|\rho_4|=|\rho_1|^{-1}$ and $|\rho_3|=|\rho_2|^{-1}$ which implies diagram (4).
\item $X^4-X^3-7X^2+1$ has roots with the relations $|\rho_1|>|\rho_2|>1>|\rho_3|>|\rho_4|$ and $|\rho_2\cdot\rho_3|<1$ such that we get diagram (5).
\item $X^4-X^3-139X^2+139X+1$ with $|\rho_1|>|\rho_2|>|\rho_3|>1>|\rho_4|$ determines diagram (6).
\item {\rm Pisot number:} $X^4-1133X^3-139X^2+13X+1$ with $|\rho_1|>1>|\rho_2|>|\rho_3|>|\rho_4|$ leads to diagram (7).
\end{itemize}

{\bf{Totally definite quaternion multiplication:}} If $f$ lies in the totally real number field $F$, then we are in the same situation as in the first case of the real multiplication. 
\\
In the following cases, the numbers $t_{1/2}=a\pm\sqrt{b^2\alpha+c^2\beta-d^2\alpha\beta}$ always define a totally imaginary quadratic extension of a totally real number field (either $F$ or $\mathbb{Q})$ and $t_1$ and $t_2$ are complex conjugate to each other (see \cite[Proposition 2.4]{herrig_19}).
\\
In the first new case, the parameters in equation \ref{FP2} become $g=4$, $l=2$ and $e=m=1$, i.e. $$\big(\prod_{j=1}^2X-\sigma_j(t_1)\prod_{j=1}^2X-\sigma_j(t_2)\big)^2.$$
We have $F=\mathbb{Q}$ and $t_{1/2}$ are imaginary quadratic numbers with $\overline{t_1}=t_2$ which means $|t_1|=|t_2|=1$ and $t_1^{-1}=t_2$. So, we are in diagram (1).
\\
The parameters $g=4$, $e=l=m=2$ lead to the formula $$\big(\prod_{j=1}^2X-\sigma_j(t_1)\big)^2\big(\prod_{j=1}^2X-\sigma_j(t_2)\big)^2$$ which gives the same possibilities for the eigenvalues as in the previous case.
\\
The last case deals with the values $g=4$, $l=4$, $e=2$, $m=1$, i.e. $$\prod_{j=1}^4X-\sigma_j(t_1)\prod_{j=1}^4X-\sigma_j(t_2).$$
The numbers $t_{1/2}$ can be roots of unity such that we get diagram (1). Otherwise, we have four roots $\rho_1,\ldots,\rho_4$ of the minimal polynomial with $\overline{\rho_1}=\rho_2$, $|\rho_1|=|\rho_2|>1$ and $\overline{\rho_3}=\rho_4$, $\rho_1^{-1}=\rho_3$ and $\rho_2^{-1}=\rho_4$. This defines diagram (2).
\\
\\
{\bf{Totally indefinite quaternion multiplication:}} As in the previous abstract, if $f$ lies in the totally real number field $F$, then we are in the same situation as in the first case of the real multiplication.
\\
The first difference to the definite case is that the numbers $t_{1/2}$ can define real or complex quadratic extensions of a totally real number field (again either $F$ or $\mathbb{Q}$).
\\
First, we consider the values $g=4$, $l=2$ and $e=m=1$, i.e. $$\big(\prod_{j=1}^2X-\sigma_j(t_1)\prod_{j=1}^2X-\sigma_j(t_2)\big)^2.$$
Since the roots of the minimal polynomial have to occur in conjugate pairs, we have either two complex roots of absolute value $1$ (diagram (1)) or two real roots of absolute value $\neq1$ (diagram (2)).
Further, the parameters $g=4$, $e=l=m=2$ imply $$\big(\prod_{j=1}^2X-\sigma_j(t_1)\big)^2\big(\prod_{j=1}^2X-\sigma_j(t_2)\big)^2$$ which leads to the same diagrams as in the previous setting.
\\
Secondly, the variables $g=l=4$, $e=2$ and $m=1$ give the formula $$\prod_{j=1}^4X-\sigma_j(t_1)\prod_{j=1}^4X-\sigma_j(t_2).$$
As a first sub-case we assume $t_{1/2}$ to be totally real numbers. We get $\rho_i=\rho_{i+4}=\overline{\rho_i}$ for totally real numbers such that $t_1$ and $t_2$ have the same minimal polynomial over $\mathbb{Q}$. Analogous to the real multiplication, we can realize the diagrams from (2) to (7).
\\
As a second sub-case, let $t_{1/2}$ define a CM-field over $F$. This determines exactly the same situation as in the last case of the totally definite quaternion multiplication, so we get the diagrams (1) and (2).
\\
The third sub-case is a new special one, the polynomial $\prod_{j=1}^4(X-\sigma_j(t_1))$ has two real and two complex roots. The existence of two real roots implies $\rho_i=\overline{\rho_i}$ for two analytic eigenvalues. Then, $\prod_{j=1}^4(X-\sigma_j(t_1))$ and $\prod_{j=1}^4(X-\sigma_j(t_2))$ have a common root such that these two minimal polynomials have to coincide.
\\
We first observe the situation that the complex roots are not of modulus $1$. The following list gives examples for every possible distribution of the analytic eigenvalues:
\begin{itemize}
\item The roots of the polynomial $$X^4+2X^3-3X^2+3X+1$$ are $|\rho_1|>|\rho_2|=|\rho_3|>1>|\rho_4|$ which gives diagram (6).
\item The polynomial $$X^4-4X^3+2X^2-3X+1$$ has the roots $|\rho_1|>1>|\rho_2|=|\rho_3|>|\rho_4|$ which leads to diagram (7).
\item The polynomial $$X^4-X^3+X^2+7X+1$$ with the roots $|\rho_1|=|\rho_2|>1>|\rho_3|>|\rho_4|$ implies diagram (8).
\item For the polynomial $$X^4-5X^3+3X^2-X+1$$ we get the roots $|\rho_1|>1>|\rho_2|>|\rho_3|=|\rho_4|$ and further diagram (9).
\item The polynomial $$X^4-X^3+3X^2+7X+1$$ has the roots $|\rho_1|=|\rho_2|>1>|\rho_3|=|\rho_4|$ and leads to diagram (10).
\item The polynomial $$X^4-5X^3+X^2+3X+1$$ comes with the roots $|\rho_1|>|\rho_2|>1>|\rho_3|=\rho_4|$ and implies diagram (11).
\end{itemize}

If one of the complex roots of $\prod_{j=1}^4(X-\sigma_j(t_1))$ is a Salem polynomial, then we get diagram (12). Its existence is provided by Theorem \ref{thm_salem_tot_indef}.
\\
\\
{\bf{Second kind and complex multiplication:}} We start with the cases that ${\rm End}_\mathbb{Q}(X)$ is a CM-field of degree $2, 4$ or $8$. If we take an element of the underlying totally number fields, we exactly rediscover all cases of the real multiplication. 
\\
Considering the values $g=4, e=l=2$ and $m=1$, we get $$(\prod_{j=1}^2X-\sigma_j(f))^{4}.$$
The two roots are imaginary quadratic and therefore lead to diagram (1). Taking $e=4$ with $l=m=2$ and $e=8$ with $l=2$ and $m=4$ gives the same result.
\\
The data $g=l=e=4$ and $m=1$ imply $$(\prod_{j=1}^4X-\sigma_j(f))^{2}.$$ and we have two possibilities: All roots lie on the unit circle (diagram (1)) or two outside and two inside the unit circle (diagram (2)).
The parameters $e=8$, $l=4$ and $m=2$ give the same result.
\\
The variables $g=4$, $e=l=8$ and $m=1$ give $$\prod_{j=1}^8X-\sigma_j(f).$$
The roots of this polynomial have to occur in conjugate pairs, such that we have the following possibilities for the 4 conjugate pairs: One pair lies inside and three pairs outside the unit circle, two inside and two outside or three inside and one outside. In the middle case, the two pairs outside the unit circle might be of the same or different absolute value and therefore the two inside. All these possibilities would at most lead to the diagrams (2) - (7) that already occur for elements of the underlying totally real subfield.
\\
As the last and new case, given by the classification of the endomorphism algebra, we consider a division algebra of dimension $4$ over a CM-field of degree $2$. By  \cite[Proposition 1.2.1]{gille_szamuely}, the algebra $B$ is a divisional quaternion algebra over an imaginary quadratic number field $F$. Further, by \cite[Corollary 2.2.10]{gille_szamuely}, the quadratic extension $H:=F(t_{1/2})$ splits the quaternion algebra $B$. The cases that the endomorphism $f$ lies in the center $F$ or in a real quadratic extension of $F$ were already considered.
\\
Let $\sigma_j$ and $\sigma'_j$ be the $\mathbb{Q}$-embeddings of $t_1$ and $t_2$ in the algebraic closure $\mathbb{C}$, then we have for $g=4$ and $e=2$ the equation
$$\prod_{j=1}^4X-\sigma_j(t_1)\prod_{j=1}^4X-\sigma'_j(t_2).$$
\\
The number field $F$ is of the Form $\mathbb{Q}(\sqrt{-d})$ with $d\in\mathbb{Z}_{>0}$ squarefree. If we take an element $\tau$ of the number field $H$, then the four roots of its minimal polynomial are either all complex or two are real and the other two are complex. The elements $t_1$ and $t_2$ define the same field extension $H$ of $F$, the images of $\sigma_j$ and $\sigma'_j$ are therefore either all complex or two of each four are real and two are complex. In the second situation, the minimal polynomials of $t_1$ and $t_2$ have to be the same, since the real roots have to be the same because of $\rho_{i+4}=\overline{\rho_i}$ for the analytic eigenvalues.
\\
We start with the case of two complex and two real embeddings of $t_1$ or $t_2$. By definition, the number field $H$ is imaginary, but not totally complex. 
\\
Let us first assume that $t_1=a+\sqrt{t}$ is real. We can write $a$ as $a_1+a_2\sqrt{-d}$ with $a_1,a_2\in \mathbb{Q}$. If $a_2$ is unequal $0$, then the equation $a_1+a_2\sqrt{-d}+\sqrt{t}\in\mathbb{R}$ implies $\sqrt{t}=-a_2\sqrt{-d}$. This is a contradiction, since $\sqrt{t}$ defines a field extension of $F=\mathbb{Q}(\sqrt{-d})$.
\\
If $a_2=0$, then we have $t\in\mathbb{Q}_{>0}$ and $H=\mathbb{Q}(\sqrt{-d},\sqrt{t})$ is  a CM-field. This is a contradiction, since $H$ is not totally complex.
\\
We now assume $t_1$ to be complex. The elements $t_1$ and $t_2$ define the same field extension of $F$ and have the same minimal polynomial. By the argument of the previous abstract, $t_2$ also has to be complex. Further, $t_1$ and $t_2$ are different and the only complex roots of the same minimal polynomial which means $\overline{t_1}=t_2$ must hold. Hence, the field $H$ is closed under complex conjugation as an imaginary field that is not totally complex. Especially, $H$ is not a CM-field.
\\
By \cite[Theorem 2]{daileda}, the field $H$ must contain unimodular units that are not roots of unity. These are exactly complex roots of a Salem polynomial of degree $4$. Let $\tau$ be a complex root of such a Salem polynomial $P$. Then, we have $\tau+\overline{\tau}=\sqrt{e}\in H$ with $e\in\mathbb{Z}_{>0}$ square-free. The elements $\sqrt{e}$ and $\sqrt{-d}$ are contained in $H$ and $H$ is of degree $4$ over $\mathbb{Q}$ such that $H=\mathbb{Q}(\sqrt{-d},\sqrt{e})$ is a CM-field, a contradiction.
\\
\\
We come to the case that all roots of the minimal polynomials of $t_1$ and $t_2$ are complex. We start with the case that $t_1$ is of absolute value $1$. Then, $t_1$ has to be a primitive $4$-th root of unity. These are exactly the numbers $$\pm\frac{\sqrt{2}}{2}\pm\frac{\sqrt{-2}}{2},\quad \pm\frac{\sqrt{3}}{2}\pm\frac{\sqrt{-1}}{2}, \quad -\frac{1}{4}+\frac{\sqrt{5}}{4}\pm\sqrt{\frac{5}{8}+\frac{\sqrt{5}}{8}}$$ 
$${\rm and}\quad -\frac{1}{4}-\frac{\sqrt{5}}{4}\pm\sqrt{\frac{5}{8}-\frac{\sqrt{5}}{8}}.$$
By the form of $t_1$, we see that if $t_1$ is one of these numbers, then $t_2$ is one of them, too. The last two numbers with the double root are obviously not possible. Hence, if $t_1$ is of modulus $1$, then all analytic eigenvalues are roots of unity and we get diagram (1).
\\
We assume that $t_1$ and $t_2$ are not of modulus $1$. The general roots of a polynomial of degree $4$ with root $t_1=a+\sqrt{t}=a_1+a_2\sqrt{-d}+\sqrt{b_1+b_2\sqrt{-d}}$ are $$a_1\pm a_2\sqrt{-d}+\sqrt{b_1\pm b_2\sqrt{-d}}\quad {\rm and} \quad a_1\pm a_2\sqrt{-d}-\sqrt{b_1\pm b_2\sqrt{-d}}$$ with $a_1,a_2,b_1,b_2\in\mathbb{Q}$. This implies that $t_1$ and $t_2$ have the same minimal polynomial and we get the analytic eigenvalues $|\rho_1|=|\rho_2|=|\rho_3|=|\rho_4|>1>|\rho_5|=|\rho_6|=|\rho_7|=|\rho_8|$ leading to diagram (2).

\section{Dynamical results}

\subsection{Hyperbolicity}

In this section, we view an abelian variety $X$ as the underlying smooth compact manifold corresponding to the complex torus $\C^g/\Lambda$. 
The dynamical properties of an endomorphism can be read directly from the properties of the action on the universal cover, which is a linear map of $\C^g$ which preserves the lattice $\Lambda$. 
In this particular setting, some important dynamical properties are well understood. 
Recall that a linear map $L \colon \C^g \longrightarrow \C^g$ is \textit{hyperbolic} if the vector space $\C^g$ can be decomposed as:
\begin{equation*}
\C^g = E_s \oplus E_u,
\end{equation*} 
where $E_s$ is a subspace containing eigenspaces associated to eigenvalues $\lambda$ such that $|\lambda| < 1$ and $E_u$ is a subspace containing eigenspaces associated to eigenvalues $\lambda$ satisfying $|\lambda|>1$.

An {endomorphism $f\colon \C^g/\Lambda \longrightarrow \C^g/\Lambda$ of an abelian variety is called \textit{hyperbolic}} if its lift  to the universal cover $\C^g$ is hyperbolic.
In other words, one can read the hyperbolicity of an endomorphism on the sequence of dynamical degrees.

\begin{prop} Let $f\colon\C^g/\Lambda \longrightarrow\C^g/\Lambda$ be an endomorphism of an abelian variety of complex dimension $g$, then the following conditions are equivalent.
\begin{enumerate}
\item[(i)] The map $f$ is hyperbolic.
\item[(ii)] The analytic representation of $f$ has no eigenvalue on the unit circle.
\item[(iii)] For any $k\leq g-1$, we have $\lambda_k(f) \neq \lambda_{k+1}(f)$. 
\end{enumerate}
\end{prop}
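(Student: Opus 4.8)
The plan is to read all three conditions off the same piece of data — the multiset of moduli of the analytic eigenvalues $\rho_1,\dots,\rho_g$ of $f$ — and translate between them; no serious difficulty is expected. Write $m_1\geq m_2\geq\cdots\geq m_g$ for the numbers $|\rho_1|,\dots,|\rho_g|$ arranged in non-increasing order. First I would record the standing reduction that $f$ may be assumed surjective, equivalently an isogeny, equivalently $\rho_a(f)\in{\rm GL}_g(\C)$, so that all $m_i>0$: this is automatic when $f$ is an automorphism (the case of interest), and a non-surjective $f$ is a degenerate case to be set aside.

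For $(i)\Leftrightarrow(ii)$ I would argue directly from the definitions. Since $f$ fixes the origin, its lift to the universal cover is the $\C$-linear map $\rho_a(f)$. Decomposing $\C^g$ into the generalized eigenspaces of $\rho_a(f)$ and grouping them according to whether the associated eigenvalue has modulus $<1$, $=1$ or $>1$ produces a direct sum $\C^g=E_s\oplus E_c\oplus E_u$; by the paper's definition $f$ is hyperbolic exactly when the middle summand $E_c$ vanishes, and $E_c=0$ says precisely that $\rho_a(f)$ has no eigenvalue on the unit circle, which is $(ii)$.

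For $(ii)\Leftrightarrow(iii)$ I would use the description of the dynamical degrees recalled earlier: $\lambda_k(f)$ is the maximal modulus of a product of $2k$ of the $2g$ numbers $\rho_1,\dots,\rho_g,\overline{\rho_1},\dots,\overline{\rho_g}$ (pairwise distinct in their indices). Since the moduli of these $2g$ numbers form two copies of $m_1,\dots,m_g$, the $2k$ largest among them are $m_1,m_1,\dots,m_k,m_k$, and hence
\begin{equation*}
\lambda_k(f)=(m_1m_2\cdots m_k)^2\qquad(0\leq k\leq g),
\end{equation*}
with $\lambda_0(f)=1$. Therefore $\lambda_k(f)=m_k^2\,\lambda_{k-1}(f)$ for $1\leq k\leq g$, and — all $\lambda_j(f)$ being positive under the reduction above — $\lambda_{k-1}(f)=\lambda_k(f)$ if and only if $m_k=1$. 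Consequently the sequence $\lambda_0(f),\dots,\lambda_g(f)$ has two equal consecutive terms if and only if some $m_k$ equals $1$, i.e. if and only if some $\rho_i$ lies on the unit circle; negating this gives the equivalence of $(ii)$ and $(iii)$.

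The statement is thus essentially a bookkeeping exercise, and the main obstacle is merely to present it cleanly. The one step I would write out with care is the identity $\lambda_k(f)=(m_1\cdots m_k)^2$: it relies on the fact that $f$ is holomorphic, so $f^*$ preserves the Hodge decomposition and ${\rm H}^{k,k}(X)=\bigwedge^k{\rm H}^{1,0}(X)\otimes\bigwedge^k{\rm H}^{0,1}(X)$, with $f^*$ acting on the two tensor factors by the $k$-fold products of the $\rho_i$ resp. the $\overline{\rho_i}$, so that the spectral radius is genuinely attained by selecting the $k$ largest moduli in each factor. The other point to flag is that a non-invertible $\rho_a(f)$ must be excluded from (or reduced out of) the statement: for instance $f=0$ on an abelian surface satisfies $(i)$ and $(ii)$ but violates $(iii)$, which is why the surjectivity reduction in the first paragraph is needed.
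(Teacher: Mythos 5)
Your proof is correct and is exactly the argument the paper has in mind: the paper states this proposition without proof, treating it as immediate from its earlier description of $\lambda_k(f)$ as the largest modulus of a product of $2k$ index-distinct eigenvalues of $\rho_r(f)$, which is precisely the identity $\lambda_k(f)=(m_1\cdots m_k)^2$ you isolate and justify. Your observation that a non-surjective endomorphism (e.g.\ $f=0$) must be excluded for $(iii)$ to be equivalent to $(i)$ and $(ii)$ is a legitimate point the paper glosses over by working in practice with automorphisms.
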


We also obtain the following convergence of forms through a straightforward calculation. 

\begin{prop}
\label{prop_hyperbolic}
 Let $f \colon \C^g / \Lambda \longrightarrow \C^g / \Lambda$ be an automorphism which is hyperbolic such that $\lambda_k(f) = \max_{j \leqslant g} \lambda_j(f)$. 
Then there exists a unique (up to scaling) $(k, k)$-form $\alpha$ with constant coefficients  such that for any $(k,k)$-form $\Omega$, the sequence
\begin{equation}
\dfrac{1}{N} \sum_{j\leqslant N} \dfrac{1}{\lambda_k(f)^j} (f^j)^* \Omega
\end{equation}
converges to a multiple of $\alpha$. 
\end{prop}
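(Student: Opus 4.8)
The plan is to write an arbitrary smooth $(k,k)$-form $\Omega$ on $X=\C^g/\Lambda$ as $\Omega=\Omega_0+\Omega_1$, where $\Omega_0$ is the translation-invariant form whose constant coefficients are the Haar averages of the coefficients of $\Omega$, and $\Omega_1=\Omega-\Omega_0$ has vanishing average. The translation-invariant $(k,k)$-forms are precisely $\wedge^k(\C^g)^*\otimes\wedge^k\overline{(\C^g)^*}$, a finite-dimensional model of ${\rm H}^{k,k}(X)$ on which $f^*$ acts (as in \S1.3) with eigenvalues the products $\rho_{i_1}\cdots\rho_{i_k}\,\overline{\rho_{j_1}}\cdots\overline{\rho_{j_k}}$, the $\rho_i$ being the analytic eigenvalues of $f$. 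I would prove convergence of $\lambda_k(f)^{-j}(f^j)^*\Omega_0$ by linear algebra and of $\lambda_k(f)^{-j}(f^j)^*\Omega_1$ to zero (as currents) by a Fourier argument, and then combine.

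For $\Omega_0$: order the eigenvalues so that $|\rho_1|\geqslant\cdots\geqslant|\rho_g|$, so that $\lambda_k(f)=(|\rho_1|\cdots|\rho_k|)^2$. The crucial step is to deduce from hyperbolicity together with $\lambda_k(f)=\max_j\lambda_j(f)$ that $|\rho_k|>|\rho_{k+1}|$: if instead $|\rho_k|=|\rho_{k+1}|$, let $[\ell+1,\ell+p]$ be the maximal block of indices with a common modulus $r$ containing $k$, so $\ell<k<\ell+p$; then $\lambda_k(f)/\lambda_\ell(f)=r^{2(k-\ell)}$ and $\lambda_{\ell+p}(f)/\lambda_k(f)=r^{2(\ell+p-k)}$, and maximality of $\lambda_k(f)$ forces $r\geqslant1$ and $r\leqslant1$, hence $r=1$, contradicting that no analytic eigenvalue lies on the unit circle. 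Granted this gap, if $U\subseteq(\C^g)^*={\rm H}^{1,0}(X)$ is the sum of the generalized eigenspaces of $f^*$ for $\rho_1,\dots,\rho_k$, then $\wedge^kU$ is a line, equal to the eigenspace for the eigenvalue $\rho_1\cdots\rho_k$, while all other eigenvalues of $f^*$ on $\wedge^k(\C^g)^*$ are of strictly smaller modulus; combining with the conjugate statement, $\lambda_k(f)=(\rho_1\cdots\rho_k)(\overline{\rho_1}\cdots\overline{\rho_k})$ is a \emph{simple, real, strictly dominant} eigenvalue of $f^*$ on ${\rm H}^{k,k}(X)$, with eigenline $\C\,\alpha$, where $\alpha$ spans $\wedge^kU\otimes\wedge^k\overline{U}$. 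Hence $\lambda_k(f)^{-j}(f^j)^*$ converges in operator norm to the rank-one spectral projection $P$ onto $\C\,\alpha$; in particular $\lambda_k(f)^{-j}(f^j)^*\Omega_0\to P\Omega_0\in\C\,\alpha$. This both produces $\alpha$ and settles the invariant part.

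For $\Omega_1$ I would test against an arbitrary smooth $(g-k,g-k)$-form $\phi=\phi_0+\phi_1$, split the same way. Because the lift of $f$ to $\C^g$ is linear with $|\det|=1$, coefficient-wise averaging commutes with $f^*$, so $\operatorname{avg}\big((f^j)^*\Omega_1\big)=0$ and $\operatorname{avg}(\phi_1)=0$; this kills the mixed terms $\int_X(f^j)^*\Omega_0\wedge\phi_1$ and $\int_X(f^j)^*\Omega_1\wedge\phi_0$, each of which pairs a translation-invariant form with a zero-average form. Only $\int_X(f^j)^*\Omega_1\wedge\phi_1$ remains; expanding both factors in Fourier series on $X$ it equals $\sum_{\xi\neq0}\big((f^j)^*\omega_\xi\big)\wedge\phi_{-(M^{\mathsf T})^j\xi}$, where $\omega_\xi,\phi_\zeta$ are the (rapidly decaying) Fourier coefficients, $M$ is the real matrix lifting $f$ and $M^{\mathsf T}$ its transpose acting on the dual lattice. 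Now $M^{\mathsf T}$ is again hyperbolic, and since a nonzero lattice vector cannot tend to $0$ under the iterates of $M^{\mathsf T}$ or of $(M^{\mathsf T})^{-1}$, neither the stable nor the unstable subspace of $M^{\mathsf T}$ contains a nonzero lattice vector. Making the resulting escape uniform by a Liouville-type estimate (the invariant subspaces being defined over a number field) gives $|(M^{\mathsf T})^j\xi|\geqslant c\,\theta^j(1+|\xi|)^{-D}$ with $\theta>1$; fed into the super-polynomial decay of $\phi_{-(M^{\mathsf T})^j\xi}$, against the bound $\|(f^j)^*|_{{\rm H}^{k,k}(X)}\|\leqslant C\,\lambda_k(f)^j$ (strict dominance) and the decay of $\omega_\xi$, this makes the whole sum $O(\vartheta^{-j})$ for some $\vartheta>1$. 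Hence $\lambda_k(f)^{-j}\int_X(f^j)^*\Omega_1\wedge\phi\to0$, so $\lambda_k(f)^{-j}(f^j)^*\Omega_1\to0$ as currents.

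Putting the two parts together, $\lambda_k(f)^{-j}(f^j)^*\Omega\to P\Omega_0$ in the sense of currents; in particular the Cesàro means do, and the limit is the multiple $P\Omega_0$ of $\alpha$. Uniqueness is then immediate: $\alpha$ spans the one-dimensional eigenline found above and is realized as a limit ($P\alpha=\alpha$), so any form with the stated property is proportional to $\alpha$. The main obstacle is the estimate for $\Omega_1$: although it is heuristically clear that the oscillating part of $\Omega$ is washed out by pullback, turning this into a bound uniform in the frequency $\xi$ requires the Diophantine input on the invariant subspaces of $M^{\mathsf T}$; the only other non-formal point, the strict gap $|\rho_k|>|\rho_{k+1}|$, is exactly where the hypothesis $\lambda_k(f)=\max_j\lambda_j(f)$ enters.
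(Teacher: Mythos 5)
Your argument is correct, but it takes a genuinely different route from the one the paper has in mind. The paper presents Proposition \ref{prop_hyperbolic} as following from a ``straightforward calculation,'' and, judging from the proof it gives of the closely analogous Theorem \ref{thm_partial}, the intended argument is ergodic-theoretic: expand $\Omega$ in the constant frame $dz_I\wedge d\bar z_J$, note that after division by $\lambda_k(f)^j$ every term except the one attached to the dominant frame element is $o(1)$ uniformly (the coefficients $\omega_{I,J}\circ f^j$ stay bounded in sup norm while the frame elements contract), and then apply Birkhoff's ergodic theorem to the one surviving coefficient, ergodicity of $f$ with respect to Haar measure being automatic since no eigenvalue is a root of unity. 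You instead split $\Omega$ into its translation-invariant part $\Omega_0$ and a zero-mean part $\Omega_1$, handle $\Omega_0$ by finite-dimensional spectral theory, and kill $\Omega_1$ by a Fourier-series argument on the dual lattice supported by a Katznelson-type Diophantine lower bound $d(\xi,E_s)\geqslant c\,|\xi|^{-D}$ for the invariant subspaces of $M^{\mathsf T}$. Two remarks on the comparison. First, your deduction of the spectral gap $|\rho_k|>|\rho_{k+1}|$ from hyperbolicity together with $\lambda_k(f)=\max_j\lambda_j(f)$ (via the block of equal moduli and the two ratios $\lambda_k/\lambda_\ell$ and $\lambda_{\ell+p}/\lambda_k$) is exactly the right way to use the hypotheses and makes precise why $\lambda_k(f)$ is a simple, strictly dominant eigenvalue on $\Lambda^{k,k}$; this step is implicit but not spelled out in the paper. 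Second, your treatment of $\Omega_1$ buys strictly more than the statement asks: genuine convergence of $\lambda_k(f)^{-j}(f^j)^*\Omega$, not merely of its Ces\`aro means, and with exponential speed, with no ergodic theorem invoked. The price is the Diophantine lemma, which you cite but do not prove; it is standard (it is the input behind exponential mixing of hyperbolic toral automorphisms), but if one only wants the Ces\`aro statement as written, the Birkhoff route of Theorem \ref{thm_partial} is shorter and avoids it entirely.
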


\subsection{Partial hyperbolic automorphisms}

We now discuss the non-hyperbolic automorphisms. 
To do so, if $f$ is an endomorphism of $\C^g/\Lambda$, we fix a basis $dz_1 , \ldots , dz_g, d\bar z_1, \ldots , d\bar z_g$ of $\C^g$ diagonalizing the linear transformation on the tangent space. 
For simplicity, let us order the eigenvalues and eigenvectors in a decreasing order.

\begin{thm} \label{thm_partial} Let $f\colon\C^g/\Lambda \longrightarrow \C^g/\Lambda$ be an automorphism of an abelian variety of complex dimension $g$.  
Suppose that the spectral radius of the linear map associated to $f$ is a Salem number. 
 Then,  for any integer $1\leqslant k \leqslant g-1$ and any  $(k,k)$-form $\Omega$, the sequence
$$  \dfrac{1}{N} \sum_{j\leqslant N} \dfrac{1}{\lambda_k(f)^j}(f^j)^* \Omega $$
converges to the constant form:
\begin{equation*}
\sum_{\substack{|I|=k \\
1\in I}} a_I dz_I\wedge d\bar z_I,
\end{equation*}
where  $$a_I = \int \Omega \wedge dz_{I^c} \wedge d\bar z_{I^c} .$$
\end{thm}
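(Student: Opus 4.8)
The plan is to diagonalise the pullback action of $f$ and reduce the statement to one-dimensional ergodic averages. First I would record the linear-algebraic picture. The rational representation $A:=\rho_r(f)\in\mathrm{GL}_{2g}(\Z)$ is, after extension of scalars, conjugate to $\rho_a(f)\oplus\overline{\rho_a(f)}$, so its eigenvalues are the analytic eigenvalues $\rho_1,\dots,\rho_g$ together with $\bar\rho_1,\dots,\bar\rho_g$, and by hypothesis $\lambda:=\max_i|\rho_i|$ is a Salem number. Because $A$ has integer entries, the Salem polynomial of $\lambda$ divides the characteristic polynomial of $A$; thus $\lambda^{-1}$ and the Galois conjugates of $\lambda$ of modulus one occur among the $\rho_i$ and $\bar\rho_i$, and in the diagonalising basis $dz_1,\dots,dz_g,d\bar z_1,\dots,d\bar z_g$ we may take $\rho_1=\lambda$, with $|\rho_i|\leqslant 1$ for $i\geqslant 2$ and $\rho_{i_0}=\lambda^{-1}$ for exactly one index $i_0$. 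Writing $\rho_I:=\prod_{i\in I}\rho_i$, this gives $\lambda_k(f)=\bigl(\max_{|I|=k}|\rho_I|\bigr)^2=\lambda^2$ for $1\leqslant k\leqslant g-1$, the maximum being attained precisely when $1\in I$ and $i_0\notin I$.

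Next I would expand $\Omega=\sum_{|I|=|J|=k}\Omega_{I\bar J}\,dz_I\wedge d\bar z_J$ with smooth coefficients and use $(f^j)^*(dz_I\wedge d\bar z_J)=\rho_I^{\,j}\bar\rho_J^{\,j}\,dz_I\wedge d\bar z_J$, so that the $(I,J)$ component of $\frac{1}{N}\sum_{j\leqslant N}\lambda_k(f)^{-j}(f^j)^*\Omega$ is
\[
\frac{1}{N}\sum_{j\leqslant N}\bigl(\Omega_{I\bar J}\circ f^j\bigr)\,\zeta_{IJ}^{\,j},\qquad \zeta_{IJ}:=\frac{\rho_I\bar\rho_J}{\lambda^2},\qquad |\zeta_{IJ}|\leqslant 1 .
\]
The core of the proof is a trichotomy for these scalar averages. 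If $|\zeta_{IJ}|<1$, the average is $O(1/N)$ and vanishes in the limit. If $|\zeta_{IJ}|=1$ but $\zeta_{IJ}\neq 1$, it equals $\frac{1}{N}\sum_{j<N}(\zeta_{IJ}U_f)^j\Omega_{I\bar J}$ for the Koopman operator $U_f\colon\phi\mapsto\phi\circ f$ on $L^2(X)$ with the $f$-invariant Haar measure; von Neumann's mean ergodic theorem makes it converge to the orthogonal projection of $\Omega_{I\bar J}$ onto the $\overline{\zeta_{IJ}}$-eigenspace of $U_f$, and since $f$ is ergodic for the Haar measure — the eigenvalues $\lambda,\lambda^{-1}$ and the unimodular conjugates of $\lambda$ being irrational rotations, not roots of unity — $U_f$ has no eigenvalue except $1$, so this projection is zero. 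If $\zeta_{IJ}=1$, Birkhoff's ergodic theorem gives convergence in $L^1(X)$, hence as currents after pairing with a smooth form, to the constant $\int_X\Omega_{I\bar J}$; for $I=J$ this is, up to a fixed sign, $a_I=\int\Omega\wedge dz_{I^c}\wedge d\bar z_{I^c}$, because only the $dz_I\wedge d\bar z_I$ part of $\Omega$ gives a top form upon wedging with $dz_{I^c}\wedge d\bar z_{I^c}$.

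The step I expect to be the main obstacle is to identify the resonant set $\{(I,J):\zeta_{IJ}=1\}$ with the diagonal $\{(I,I):1\in I,\ i_0\notin I\}$. A resonance forces $|\rho_I|=|\rho_J|=\lambda$, i.e.\ $1\in I\cap J$ and $i_0\notin I\cup J$, and then $\zeta_{IJ}=1$ becomes a multiplicative relation $\prod_{i\in I\setminus\{1\}}\rho_i=\prod_{j\in J\setminus\{1\}}\rho_j$ among unimodular Galois conjugates of $\lambda$. This is exactly the point where the arithmetic input is needed: the arguments of these conjugates, together with $\pi$, are linearly independent over $\Q$, so the only relations are the trivial ones $\rho\cdot\bar\rho=1$; fed back into the constraints on $I$ and $J$ (and using $|I|=|J|=k\leqslant g-1$) this forces $I=J$. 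I would take care that no accidental cancellation survives — in particular that $\rho_i\neq\bar\rho_j$ for distinct unimodular conjugates, which holds since they are never $\pm1$. Combining the three regimes, the sequence then converges, as a current, to $\sum_{\substack{|I|=k,\,1\in I}}a_I\,dz_I\wedge d\bar z_I$ — the announced constant form, which is an eigenform of $f^*$ with eigenvalue $\lambda_k(f)$ and hence lies in $E_k$.
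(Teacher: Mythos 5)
Your strategy is essentially the paper's: diagonalise $f^*$, decompose $\Omega$ into pure components $\Omega_{I\bar J}\,dz_I\wedge d\bar z_J$, discard the components whose eigenvalue has modulus $<\lambda_k(f)$, and treat the unimodular components with the mean ergodic theorems. The one genuine difference is your handling of the non-resonant unimodular terms: you invoke the fact that an ergodic toral automorphism has trivial point spectrum on $L^2(\mu)$, so the von Neumann projection onto the $\overline{\zeta_{IJ}}$-eigenspace of $U_f$ vanishes when $\zeta_{IJ}\neq 1$, whereas the paper reproves this by hand by following Fourier coefficients along the divergent orbit $A^n\cdot l$ in the dual lattice. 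Both are correct; yours is shorter but relies on a (standard) imported fact, the paper's is self-contained.

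The gap sits exactly at the step you flag as the main obstacle, and your resolution of it is a non sequitur. From ``the only multiplicative relations among the unimodular conjugates are the trivial ones $\rho\bar\rho=1$'' you conclude $I=J$; but the trivial relations are precisely what produce off-diagonal resonances. The unimodular analytic eigenvalues $\rho_2,\dots,\rho_{g-1}$ can themselves contain conjugate pairs --- in the paper's own fourfold example they are all four roots of the Salem polynomial, so $\rho_3=\bar\rho_2$ --- hence your parenthetical claim that $\rho_i\neq\bar\rho_j$ for distinct indices is false. Concretely, take $g=6$ with analytic eigenvalues $\rho_1=\lambda$, $\rho_2=\gamma_1$, $\rho_3=\bar\gamma_1$, $\rho_4=\gamma_2$, $\rho_5=\bar\gamma_2$, $\rho_6=\lambda^{-1}$ (such automorphisms exist by Theorem B(i) applied to a degree-$6$ Salem number). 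For $k=3$, the multi-indices $I=\{1,2,3\}$ and $J=\{1,4,5\}$ give $\zeta_{IJ}=(\gamma_1\bar\gamma_1)\cdot\overline{(\gamma_2\bar\gamma_2)}=1$ with $I\neq J$, so the limit acquires the off-diagonal term $\bigl(\int\Omega_{I\bar J}\,d\mu\bigr)\,dz_I\wedge d\bar z_J$ and the announced formula fails. To be fair, the paper's proof has the same blind spot (it asserts that $e^{i(\theta_{I'}-\theta_{J'})}\neq 1$ whenever $I'\neq J'$), the limit is in any case a constant eigenform in ${\rm E}_k$, and in the range $g\leqslant 4$ where the theorem is actually used the set $\{2,\dots,g-1\}$ carries at most one non-real conjugate pair, so the resonant set really is the diagonal ($\zeta=\gamma^2\neq1$ for the cross term). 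But as written your argument does not establish the stated formula for general $g$: you must either impose that no two disjoint sub-multisets of $\{\rho_2,\dots,\rho_{g-1}\}$ have equal products --- which fails as soon as there are two conjugate pairs --- or enlarge the limit to include the corresponding off-diagonal eigenforms.
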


\begin{rem} The above proposition shows that the averages in the space of forms realize a projection onto a finite dimensional vector space of dimension $g-2 \choose k-1$.
\end{rem}

\begin{proof}
Suppose first $k = 1$. Let us first observe that the eigenvalue $\lambda_1(f)$ is the spectral radius of the action $f^*$ on $\Lambda^{1,1}(\C^g)$.
We order the eigenvalues of $f$ such that $ |\rho_1|  > |\rho_2| = \ldots = |\rho_{g-1}| =1  > |\rho_g|$.
Let us choose a basis of $(\C^g)^*$ diagonalizing the linear map $f$ so that:
\begin{equation*}
f^* z_i  = \rho_i z_i, \ f^* \bar z_i = \bar \rho_i \bar z_i, 
\end{equation*}
for $i =1$ or $i= g$
and such that
\begin{equation*}
f^* z_i = e^{i\theta_i} z_i,\ f^* \bar z_i = e^{-i\theta_i} \bar z_i,
\end{equation*}
for $2 \leqslant i \leqslant  g-1$. 
Now the eigenvalue $\lambda_1(f)$ is a simple eigenvalue for the action of $f^* $ on $\Lambda^{1,1}(\C^g)$ and the eigenvector associated to $\lambda_1(f)$ is given by:
\begin{equation*}
\alpha=dz_1 \wedge d\bar z_1.
\end{equation*}
Let us now take a general  strictly positive $(1, 1 ) $-form:
\begin{equation*}
\omega = \sum_{i} \omega_{i,j}\, dz_i \wedge d\bar z_j,
\end{equation*}
where the $\omega_{i,j}$ are functions.
Taking the pullback $(f^j)^* \omega / \lambda_1(f)^j$, the only term that remains is of the form:
\begin{equation*}
  \dfrac{1}{\lambda_1(f)^j}(f^j)^* \omega = (\omega_{1,1} \circ f^j) \alpha  + o (1).
\end{equation*} 
The average is thus given by:
\begin{equation*}
\dfrac{1}{N}  \sum_{j\leqslant N}\dfrac{1}{\lambda_1(f)^j}(f^j)^* \omega = \dfrac{1}{N}\sum_{j \leqslant N} (\omega_{1,1} \circ f^j)   + o(1).
\end{equation*}
Since the linear transformation $L$  has no eigenvalues that are roots of unity, the transformation $f$ is ergodic by \cite[Corollary 1.10.1]{walters} and Birkhoff's ergodic theorem (see e.g \cite[Theorem 1.14]{walters}) yields:
\begin{equation*}
\lim_{N\rightarrow +\infty }\dfrac{1}{N}  \sum_{j\leqslant N}\dfrac{1}{\lambda_1(f)^j}(f^j)^* \omega = \left (\int_X \omega_{1,1} d\mu \right ) \alpha, 
\end{equation*}
where $\mu$ is the Haar measure on the abelian variety $X$.

\medskip

Let us now suppose that $k \geqslant 2$ and $k \leqslant g-1$. 
Take a pure $(k,k)$-form:
\begin{equation*}
dz_I \wedge d\bar z_J,
\end{equation*} 
where the multi-indices $I$, $J$ are of length $k$. 
Observe that $1/\lambda_k(f)^m(f^m)^* dz_I \wedge d\bar z_J $ converges with exponential speed to zero whenever  $I$ or $J$ do not contain $ 1$. 
This yields the following estimate:
\begin{equation*}
\dfrac{1}{\lambda_k(f)^m}(f^m)^* dz_I \wedge d\bar z_J  = e^{i m( \theta_{I'} - \theta_{J'})} dz_I \wedge d\bar z_J,
\end{equation*}
where $I = I' \cup \{1\}$, $J = J' \cup \{1 \}$ and $I',J'$ are contained in $\{ 2 , \ldots ,g-1 \}$, and where $\theta_{I'}, \theta_{J'}$ are given by:
\begin{equation*}
\theta_{I'} = \sum_{i\in I'} \theta_i. 
\end{equation*}
We obtain the asymptotic relation:
\begin{equation*}
\dfrac{1}{N}\sum_{m\leqslant N} \dfrac{1}{\lambda_k(f)^m} (f^m)^* \omega = \dfrac{1}{N}\sum_{m\leqslant N}  \sum  \left (\omega_{I,J} \circ f^m \right ) e^{i m (\theta_{I'} - \theta_{J'})} dz_{I} \wedge d\bar z_{J}  + o(1),
\end{equation*}
where the sum is taken over all multi-indices of the form $I = I' \cup \{1\}, J= J' \cup \{1 \}$ where $I', J' \subset \{ 2 , \ldots, g-1 \}$.

Consider the operator $T_{I,J}$ on $L^2(\mu, \mathbb{C})$ given by:
\begin{equation*}
T_{I,J} = e^{i (\theta_I - \theta_J)} f^*. 
\end{equation*}
By construction, the operator $T_{I', J'}$ is unitary for $I', J' \subset \{2 , \ldots , n-1\}$.
By Von-Neumann's ergodic theorem \cite[Corollary 1.14.1]{walters}, the sequence of functions:
\begin{equation*}
\dfrac{1}{N} \sum_{n \leqslant N} T_{I',J'}^n (\omega_{I,J}) 
\end{equation*}
converges to a function $\bar \omega \in L^2(\mu, \mathbb{C})$ satisfying:
\begin{equation*}
T_{I',J'} \bar \omega = \bar \omega.  
\end{equation*}
We thus obtain:
\begin{equation*}
e^{i(\theta_{I'} - \theta_{J'})} \bar \omega \circ f = \bar \omega.  
\end{equation*}
Hence, we get:
\begin{equation} \label{eq_fourier}
f^* \bar \omega = e^{i (\theta_{J'} - \theta_{I'})} \bar \omega. 
\end{equation}
Taking the Birkhoff average and applying Birkhoff ergodic theorem, we have:
\begin{equation*}
 \lim_{N \rightarrow +\infty} \dfrac{1}{N}  \sum_{n\leqslant N}(f^n)^* \bar \omega = \int \bar \omega d\mu = \lim_{N\rightarrow +\infty} \dfrac{1}{N} \sum_{n \leqslant N} e^{i n(\theta_{J'} - \theta_{I'})} \bar \omega = 0,
\end{equation*}
under the condition that $I' \neq J'$.
This shows that the constant term in the Fourier decomposition of $\bar \omega$ is zero. Let us show now that $\bar \omega$ is zero.
Take $l \in \Lambda^*$ and denote by $c_l(\bar \omega)$ the Fourier coefficient of the function $\bar \omega$ associated to $l$. 
\eqref{eq_fourier} shows that:
\begin{equation*}
c_{l} (f^*\bar \omega) = e^{i(\theta_{I'} - \theta_{J'})} c_l(\bar \omega). 
\end{equation*}
Moreover, if $A$ is the  matrix associated to $f$ acting on the dual lattice $\Lambda^*$, then:
\begin{equation*}
c_l(f^*\bar\omega) = c_{A\cdot l} (\bar\omega) = e^{i(\theta_{I'} - \theta_{J'})} c_l(\bar \omega).
\end{equation*}
Iterating $n$ times, one obtains the relation:
\begin{equation*}
 c_{A^n\cdot l} (\bar\omega) =  e^{i n(\theta_{I'} - \theta_{J'})} c_l(\bar \omega).
\end{equation*}
Since the sequence $A^n \cdot l $ diverges to infinity as $A$ has an eigenvalue strictly larger than $1$ and since the Fourier coefficient of $\bar \omega$ decreases exponentially fast at infinity, we deduce that $c_l(\bar \omega)$ also decreases exponentially fast, hence they are all zero. 
This shows that $\bar \omega = 0$ and we have obtained that:
\begin{equation*}
 \dfrac{1}{N}\sum_{m\leqslant N}   \left (\omega_{I,J} \circ f^m \right ) e^{i m (\theta_{I'} - \theta_{J'})} = 0,
\end{equation*}
whenever $I' , J' \subset \{2, \ldots, g-1 \} $ are distinct multi-indices. 
The asymptotic formula can be simplified as:
\begin{equation*}
\dfrac{1}{N}\sum_{m\leqslant N} \dfrac{1}{\lambda_k(f)^m} (f^m)^* \omega = \dfrac{1}{N}\sum_{m\leqslant N}  \sum_{\substack{|I|=k \\
1\in I}}  \left (\omega_{I,I} \circ f^m \right )  dz_{I} \wedge d\bar z_{I}  + o(1).
\end{equation*}
 Birkhoff ergodic theorem thus yields:
 \begin{equation*}
 \lim_{N\rightarrow +\infty} \dfrac{1}{N} \sum_{m\leqslant N} \dfrac{1}{\lambda_k(f)^m} (f^m)^*\omega =  {\sum_{\substack{|I|=k \\
1\in I}} } \left ( \int \omega_{I,I} \right ) dz_I \wedge d\bar z_I. 
 \end{equation*}
The proposition then follows by identifying the integral with each coefficient $a_I$. 
%
%
%
\end{proof}

\subsection{Proof of Theorem A}

We start by the following simple criterion for an endomorphism to be integrable.

\begin{prop} \label{prop_integrability}
Let $f\colon X\longrightarrow X$ be an automorphism of a $g$-dimensional abelian variety. If $1$ is an eigenvalue of the analytic representation $\rho_a(f)$ of algebraic multiplicity $1\leq \mu_{\rm alg}(1)=h <g$, then $X$ is isogenous to the product of abelian varieties $Y_1\times Y_2$, where $Y_1$ is of dimension $h$.
\\
In particular, $Y_1$ is an abelian subvariety of $X$ fixed by the automorphism $f$.
\end{prop}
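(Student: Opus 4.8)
The plan is to produce $Y_1$ as the identity component of the kernel of an endomorphism built out of $f$, arranged so that its analytic representation cuts out precisely the generalized eigenspace of $\rho_a(f)$ for the eigenvalue $1$.

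First I would note that, since $h<g$, the map $f$ is not the identity, so $\phi:=f-\mathrm{id}_X$ is a nonzero element of $\mathrm{End}(X)$, and I would set $\psi:=\phi^{g}$. Because $\rho_a$ is a ring homomorphism, $\rho_a(\psi)=(\rho_a(f)-I_g)^{g}$, and since for any $g\times g$ matrix the generalized eigenspace attached to an eigenvalue $\lambda$ equals $\ker\big((\,\cdot\,-\lambda I_g)^{g}\big)$, the kernel of $\rho_a(\psi)$ is exactly the generalized eigenspace of $\rho_a(f)$ for $1$. Hence $\dim_{\mathbb{C}}\ker\rho_a(\psi)=h$; in particular $\rho_a(\psi)\neq 0$ and $\psi\neq 0$.

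Next I would take $Y_1:=(\ker\psi)^{0}$, the identity component of $\ker\psi$. By the standard correspondence between endomorphisms of a complex torus and their analytic representations (see \cite{birkenhake_lange}), $Y_1$ is an abelian subvariety of $X$ of dimension $\dim_{\mathbb{C}}\ker\rho_a(\psi)=h$. Poincaré's Complete Reducibility Theorem then supplies an abelian subvariety $Y_2\subseteq X$ with $Y_1\cap Y_2$ finite and $Y_1+Y_2=X$, so that the addition morphism $Y_1\times Y_2\to X$ is an isogeny with $\dim Y_2=g-h$; this is the first assertion. For the ``in particular'' clause, I would use that $\phi$ commutes with $f$: for $y\in Y_1$ one gets $\psi\big(f(y)\big)=\phi^{g}\big(y+\phi(y)\big)=\phi^{g}(y)+\phi\big(\phi^{g}(y)\big)=0$, so $f(Y_1)$ is a connected subgroup of $\ker\psi$ containing the origin, hence contained in $(\ker\psi)^{0}=Y_1$; since $f$ is an automorphism, $\dim f(Y_1)=\dim Y_1$, which forces $f(Y_1)=Y_1$.

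The only step requiring genuine care is the identity $\dim Y_1=\dim_{\mathbb{C}}\ker\rho_a(\psi)$: this is the assertion that $\ker\rho_a(\psi)$ meets the lattice $\Lambda$ in a full-rank sublattice, equivalently that $(\ker\psi)^{0}$ is compact and not merely an immersed connected Lie subgroup. This is part of the classical dictionary for abelian varieties; if one wishes to bypass it, one can instead argue through the image, using that $\psi(X)$ is an abelian subvariety of dimension $\operatorname{rank}\rho_a(\psi)=g-h$ and that $(\ker\psi)^{0}$ has complementary dimension $h$. Everything else in the argument is formal once this normalization is fixed.
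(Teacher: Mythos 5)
Your proof is correct, and its skeleton is the same as the paper's: realize $Y_1$ as the identity component of the kernel of an endomorphism built from $f-\mathrm{id}$, use the standard dictionary $\dim(\ker\psi)^0=\dim_{\mathbb{C}}\ker\rho_a(\psi)$, and split off a complement by Poincar\'e reducibility. The one substantive difference is your replacement of $f-\mathrm{id}$ by $\psi=(f-\mathrm{id})^{g}$. The paper takes $Y_1=\ker(f-\mathrm{id})_0$ and asserts its dimension is $h$; but $\ker(\rho_a(f)-\mathbf{1})$ is the eigenspace for $1$, whose dimension is the \emph{geometric} multiplicity, which can be strictly smaller than the algebraic multiplicity $h$ when $\rho_a(f)$ is not semisimple (e.g.\ a unipotent Jordan block inside $\mathrm{End}(E^3)$ for an elliptic curve $E$). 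Passing to the $g$-th power cuts out the full generalized eigenspace, so your dimension count genuinely yields $h$; on this point your argument is tighter than the paper's. The trade-off appears in the final clause: the paper's $Y_1$ lies inside $\ker(f-\mathrm{id})$ and is therefore pointwise fixed, whereas your $Y_1$ is only shown to satisfy $f(Y_1)=Y_1$. For the way the proposition is used in the proof of Theorem A --- namely to conclude that $f$ descends to an automorphism of $X/Y_1$, which is isogenous to the lower-dimensional abelian variety $Y_2$ --- invariance is all that is needed, so this weakening is harmless; just note that you are reading ``fixed'' as ``invariant'' rather than ``pointwise fixed.''
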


We postpone the proof of the proposition above. Let us show Theorem A when $X$ is of dimension $1, 2, 3$ and $4$. 

Take an automorphism $f\colon X \longrightarrow X$ of positive entropy.

%

Suppose that $X$ is an abelian surface. Then either $X$ is simple or non-simple. If $X$ is simple, then our classification in Section \ref{section_surfaces} shows that any automorphism of positive entropy is hyperbolic. Theorem A then follows from Proposition \ref{prop_hyperbolic}. 
In the case where $X$ is not simple, we claim that no roots of unity are eigenvalues of the analytic representation. Indeed, if it were the case, Proposition \ref{prop_integrability} shows that $f$ would be semi-conjugate to an automorphism on an elliptic curve, but then $f$ would have zero entropy. 
Moreover, if the analytic representation has an eigenvalue that is on the unit circle but not a root of unity, then the other eigenvalue is its conjugate and $f$ also has zero entropy.

\smallskip 

Suppose now that $X$ is an abelian threefold. If $X$ is simple, then our classification in Section \ref{section_threefold} shows that $f$ is hyperbolic. And we conclude using Proposition \ref{prop_hyperbolic}.
If $X$ is not simple, then two cases appear. If the analytic representation has an eigenvalue that is a root of unity, then $f$ is semi-conjugate to an automorphism on an elliptic curve or an abelian surface. 
Otherwise, no eigenvalues of the analytic representation is a root of unity, this implies that the three eigenvalues of the analytic representation are also not on the unit circle because $f$ has positive entropy (as their product is $1$). Hence $f$ is hyperbolic and we can apply Proposition \ref{prop_hyperbolic}.

\smallskip

In the case where $X$ is a fourfold. If $X$ is simple, then our classification in Section \ref{section_fourfold} shows that $f$ is either hyperbolic or the first dynamical degree is the square of a Salem number. In either situation, we apply Proposition \ref{prop_hyperbolic} or Theorem  \ref{thm_partial} respectively and obtain Theorem A.
If $X$ is not simple, three cases appear. If the analytic representation has no eigenvalue on the unit circle, then $f$ is hyperbolic and Theorem A holds by Proposition \ref{prop_hyperbolic}. 
If the analytic representation has a root of unity as an eigenvalue, then Proposition \ref{prop_integrability} shows that $f$ is semi-conjugate to an automorphism on an abelian variety of smaller dimension. 
If the analytic representation has an eigenvalue on the unit circle that is not a root of unity, then its complex conjugate is another eigenvalue and the other two eigenvalues are in the unit disk and on the outside respectively. We obtain that $\lambda_1(f)$ is the square of a Salem number and we conclude using Theorem \ref{thm_partial}.

\begin{proof}[Proof of Proposition \ref{prop_integrability}]
Let $1$ be an analytic eigenvalue of the automorphism $f\colon X\longrightarrow X$ of the abelian variety $\mathbb{C}^g/\Lambda$. We consider the subtorus $Y_1:={\rm ker}(f-{\rm id})_0$, the connected component of ${\rm ker}(f)$ that contains $0$ (see \cite[Proposition 1.2.4]{birkenhake_lange}). It must be of positive dimension. Because, otherwise, $f-{\rm id}$ would be an isogeny and $\rho_a(f)-\mathbf{1}$ an invertible matrix which is impossible, since $1$ is an eigenvalue of $\rho_a(f)$. 
\\
We determine the dimension of $Y_1$: It is known that $(\rho_a(f)-\mathbf{1})^{-1}(\mathbb{C}^g)_0$ has to be a subspace $V$ of $\mathbb{C}^g$ of dimension $h$. Further, we know that $\Gamma=(\rho_a(f)-\mathbf{1})^{-1}(\mathbb{C}^g)_0\cap\Lambda$ is a lattice in $V$ such that $Y_1=V/\Gamma$ becomes a complex subtorus of $X$ of dimension $h$. Since complex subtori of abelian varieties are abelian varieties, $Y_1$ is an abelian subvariety of $X$.
\\
By Poincare's reducibility theorem \cite[Theorem 5.3.7]{birkenhake_lange}, we get an abelian subvariety $Y_2$ of $X$ such that $X$ is isogenous to $Y_1\times Y_2$.
\\
By construction, we have $(f-{\rm id})(Y_1)=0$, i.e. $f(Y_1)=Y_1$.
\end{proof}

%
%
%
%
%
%
%
%
%
%
%
%
%
%
%
%

\section*{Appendix -- Mathematica code for Lemma \ref{lem_ideal_5_technical}}
\label{appendix_lem_5}
\addcontentsline{toc}{section}{Appendix -- Mathematica code for Lemma \ref{lem_ideal_5_technical}}

The code we used for the proof of the lemma is as follows. 
The function $isprime$ returns True if the ideal $5 \mathcal{O}_K$ is prime and False otherwise.

\begin{doublespace}
\noindent\(\pmb{\text{}}\\
\pmb{\text{L1}[\text{x$\_$}, \text{y$\_$}] \text{:=} }\\
\pmb{\text{Mod}[\text{Expand}[\text{u1} \text{v1}-\text{u4} \text{v2}-\text{u3} \text{v3}-\text{u4} \text{v3}-\text{u2} \text{v4}-\text{u3} \text{v4}-2
\text{u4} \text{v4} \text{/.} }\\
\pmb{\{\text{u1} \to  x[[1]], \text{u2} \to  x[[2]], \text{u3}\to  x[[3]], \text{u4}\to  x[[4]],\text{v1} \to  y[[1]], }\\
\pmb{\text{v2} \to  y[[2]], \text{v3}\to  y[[3]], \text{v4}\to  y[[4]] \}],5]}\\
\pmb{\text{L2}[\text{x$\_$}, \text{y$\_$} ] \text{:=} }\\
\pmb{\text{Mod}[\text{Expand}[(\text{u2} \text{v1}+\text{u1} \text{v2}+\text{u4} \text{v2}+\text{u3} \text{v3}+\text{u2} \text{v4}+\text{u4} \text{v4})
\text{/.} }\\
\pmb{\{\text{u1} \to  x[[1]], \text{u2} \to  x[[2]], \text{u3}\to  x[[3]], \text{u4}\to  x[[4]],\text{v1} \to  y[[1]], }\\
\pmb{\text{v2} \to  y[[2]], \text{v3}\to  y[[3]], \text{v4}\to  y[[4]] \}],5]}\\
\pmb{\text{L3}[\text{x$\_$},\text{y$\_$}] \text{:=} }\\
\pmb{\text{Mod}[\text{Expand}[(\text{u3} \text{v1}+\text{u2} \text{v2}+\text{u4} \text{v2}+\text{u1} \text{v3}+\text{u3} \text{v3}+2 \text{u4} \text{v3}+\text{u2}
\text{v4}+2 \text{u3} \text{v4}+2 \text{u4} \text{v4}) \text{/.} }\\
\pmb{\{\text{u1} \to  x[[1]], \text{u2} \to  x[[2]], \text{u3}\to  x[[3]], \text{u4}\to  x[[4]],\text{v1} \to  y[[1]], }\\
\pmb{\text{v2} \to  y[[2]], \text{v3}\to  y[[3]], \text{v4}\to  y[[4]] \}],5]}\\
\pmb{\text{L4}[\text{x$\_$}, \text{y$\_$}] \text{:=}\text{  }}\\
\pmb{\text{Mod}[\text{Expand}[(\text{u4} \text{v1}+\text{u3} \text{v2}+\text{u4} \text{v2}+\text{u2} \text{v3}+\text{u3} \text{v3}+2 \text{u4} \text{v3}+\text{u1}
\text{v4}+\text{u2} \text{v4}+2 \text{u3} \text{v4}+4 \text{u4} \text{v4}) \text{/.} }\\
\pmb{\{\text{u1} \to  x[[1]], \text{u2} \to  x[[2]], \text{u3}\to  x[[3]], \text{u4}\to  x[[4]],\text{v1} \to  y[[1]], }\\
\pmb{\text{v2} \to  y[[2]], \text{v3}\to  y[[3]], \text{v4}\to  y[[4]] \}],5]}\\
\pmb{\text{isprime}\text{:=} \text{Module}[\{ \text{valueisprime}, \text{i1}, \text{j1} , \text{i2} , \text{j2} , \text{i3}, \text{j3}, \text{i4},
\text{j4}\}, }\\
\pmb{\text{valueisprime}\text{:=}\text{True}; }\\
\pmb{\text{For}[\text{i1}=0, \text{i1}< 5 , \text{i1}\text{++},\text{  }}\\
\pmb{\text{For}[\text{i2}=0, \text{i2}< 5 , \text{i2}\text{++},\text{  }}\\
\pmb{ \text{For}[\text{i3}=0, \text{i3}< 5 , \text{i3}\text{++},\text{  }}\\
\pmb{ \text{For}[\text{i4}=0, \text{i4}< 5 , \text{i4}\text{++},\text{  }}\\
\pmb{ \text{For}[\text{j1}=0, \text{j1}\text{$<$=} \text{i1} , \text{j1}\text{++},\text{  }}\\
\pmb{ \text{For}[\text{j2}=0, \text{j2}< 5 , \text{j2} \text{++},\text{  }}\\
\pmb{\text{For}[\text{j3}=0, \text{j3}< 5 , \text{j3}\text{++},\text{  }}\\
\pmb{\text{For}[\text{j4}=0, \text{j4}< 5 , \text{j4}\text{++},\text{  }}\\
\pmb{\text{If} [ \text{L1}[\{\text{i1},\text{i2},\text{i3},\text{i4}\} , \{\text{j1},\text{j2},\text{j3},\text{j4}\}] == }\\
\pmb{\text{L2}[\{\text{i1},\text{i2},\text{i3},\text{i4}\} , \{\text{j1},\text{j2},\text{j3},\text{j4}\}] == }\\
\pmb{\text{L3}[\{\text{i1},\text{i2},\text{i3},\text{i4}\} , \{\text{j1},\text{j2},\text{j3},\text{j4}\}] \text{==}}\\
\pmb{\text{L4}[\{\text{i1},\text{i2},\text{i3},\text{i4}\} , \{\text{j1},\text{j2},\text{j3},\text{j4}\}]==0,}\\
\pmb{\text{If}[ (\text{i1}==\text{i2}==\text{i3}==\text{i4}==0 )\|( \text{j1}==\text{j2}==\text{j3}==\text{j4}==0), , }\\
\pmb{\text{valueisprime}\text{:=}\text{False} ]] ] ] ] ] ] ] ] ] ; \text{valueisprime}]}\)
\end{doublespace}

\begin{doublespace}
\noindent\(\pmb{\text{}}\\
\pmb{\text{isprime}}\)
\end{doublespace}

\begin{doublespace}
\noindent\(\text{True}\)
\end{doublespace}

\bibliographystyle{amsalpha}
{ \small
\bibliography{references}

}

\bibliographystyle{amsalpha}

\footnotesize
   \bigskip
   Nguyen-Bac Dang,
   Institute for Mathematical Science,
   Stony Brook University,
   Stony Brook NY 11794-3660, USA.

   \nopagebreak
   \textit{E-mail address:} \texttt{nguyen-bac.dang@stonybrook.edu}

   \bigskip
   Thorsten Herrig,
   Institut f\"ur Mathematik,
   Humboldt-Universit\"at zu Berlin,
   Rudower Chaussee 25,
   D-12489 Berlin, Germany.

   \nopagebreak
   \textit{E-mail address:} \texttt{herrigth@math.hu-berlin.de}

\end{document}